\documentclass[%a4paper, 
11pt,  reqno]{amsart}
\usepackage{amsmath,amssymb,amscd,amsthm}

\usepackage{color}
\usepackage{hyperref}

\usepackage{cases}%%%

\usepackage{bm}
%%%%%%%%%%%%%%%%%%%
\usepackage{comment}
%%%%%%%%%%%%%%%%%%%%

%\usepackage{enumerate}

\allowdisplaybreaks[3]

\newtheorem{theorem}{Theorem} [section]

\newtheorem{lemma}[theorem]{Lemma}
\newtheorem{proposition}[theorem]{Proposition}
\newtheorem{remark}[theorem]{Remark}
\newtheorem{definition}[theorem]{Definition}
\newtheorem{corollary}[theorem]{Corollary}

\DeclareMathOperator*{\esssup}{ess \ sup}

\newcommand{\Z}{\mathbb{Z}}
\newcommand{\R}{\mathbb{R}}
\newcommand{\C}{\mathbb{C}}
\newcommand{\T}{\mathbb{T}}

\newcommand{\al}{\alpha}
\newcommand{\be}{\beta}
\newcommand{\dl}{\delta}

\newcommand{\Ta}{\Theta}

\newcommand{\eps}{\varepsilon}

\newcommand{\Ld}{\Lambda}

\newcommand{\ft}{\widehat}
\newcommand{\Ft}{{\mathcal{F}}}
\newcommand{\wt}{\widetilde}
\newcommand{\cj}{\overline}
\newcommand{\dx}{\partial_x}

\newcommand{\dt}{\partial_t}

\newcommand{\pd}{\partial}

\renewcommand{\l}{\ell}

\newcommand{\les}{\lesssim}
\newcommand{\ges}{\gtrsim}

%Japanese Bracket
\newcommand{\jb}[1]
{\langle #1 \rangle}

\newcommand{\Ml}{\mathcal{M}}
\newcommand{\Nl}{\mathcal{N}}
\newcommand{\Kl}{\mathcal{K}}

\newcommand{\N}{\mathbb{N}}

\newcommand{\fw}{\mathfrak w}

\newcommand{\fR}{\mathfrak R}

\newcommand{\I}{\mathrm{I}}
\newcommand{\II}{\mathrm{I \! I}}
\newcommand{\III}{\mathrm{I \! I \! I}}

\numberwithin{equation}{section}
\numberwithin{theorem}{section}

\let\Re=\undefined\DeclareMathOperator*{\Re}{Re}
\let\Im=\undefined\DeclareMathOperator*{\Im}{Im}

\begin{document}
\baselineskip = 14pt

\title[WP and IP for periodic semi-linear Schr\"odinger equations]
{Well- and ill-posedness of the Cauchy problem for semi-linear Schr\"odinger equations on the torus}

\author[T.~Kondo and M.~Okamoto]
{Toshiki Kondo and Mamoru Okamoto}

\address{
Toshiki Kondo\\
Department of Mathematics\\
Graduate School of Science\\ Osaka University\\
Toyonaka\\ Osaka\\ 560-0043\\ Japan
}
\email{u534463k@ecs.osaka-u.ac.jp}

\address{
Mamoru Okamoto\\
Department of Mathematics\\
Graduate School of Science\\ Osaka University\\
Toyonaka\\ Osaka\\ 560-0043\\ Japan}
\email{okamoto@math.sci.osaka-u.ac.jp}

\subjclass[2020]{35Q55}

\keywords{Schr\"odinger equation; well-posedness; ill-posedness;
energy estimate;
gauge transformation}

\begin{abstract}
We consider the Cauchy problem for semi-linear Schr\"odinger equations on the torus $\T$.
We establish a necessary and sufficient condition on the polynomial nonlinearity for the Cauchy problem to be well-posed in the Sobolev space $H^s(\T)$ for $s>\frac 52$.
For the well-posedness,
we use the energy estimates and the gauge transformation.
For the ill-posedness,
we prove the non-existence of solutions to the Cauchy problem.
\end{abstract}

%\date{\today}
%%
%
\maketitle
%

%\tableofcontents

\section{Introduction}

\label{SEC:1}

We consider the Cauchy problem for the following semi-linear Schr\"odinger equation:
\begin{equation} 
\left\{
\begin{aligned}
&\dt u + i \dx^2 u = F(u, \dx u, \cj u, \cj{\dx u}), \\ 
&u|_{t=0} = \phi,
\end{aligned}
\right.
\label{NLS}
\end{equation}
where $\T := \R / 2\pi \Z$, $u = u(t,x) : \R \times \T \to \C$ is an unknown function,
and $\phi : \T \to \C$ is a given function.
Here, $F(\al, \be, \cj{\al}, \cj{\be})$ is a polynomial in $\al, \be, \cj \al, \cj \be$,
and $\cj \al$ denotes the complex conjugate of $\al \in \C$.

In the linear case,
the Cauchy problem
\[
\left\{
\begin{aligned}
&\dt u + i \dx^2 u = a(x)u + b(x) \dx u, \\ 
&u|_{t=0} = \phi
\end{aligned}
\right.
\]
is well-posed in $L^2(\T)$
if and only if
\[
\int_\T \Im b(x) dx =0.
\]
This is a torus version of the Mizohata condition.
See \cite{Chi02} and references therein.

We mention well-posedness results for nonlinear Schr\"odinger equations (NLS) on the torus.
When the nonlinearity involves a derivative,
its structure plays a crucial role in establishing well-posedness.
Indeed,
Gr\"unrock \cite{Gr00} proved that the Cauchy problem
\begin{equation}
\left\{
\begin{aligned}
&\dt u + i \dx^2 u = \dx (\cj u^m), \\ 
&u|_{t=0} = \phi
\end{aligned}
\right.
\label{NLDG1}
\end{equation}
is well-posed in $H^s(\T)$ for $s \ge 0$, $m \in \Z_{\ge 2}$, and $s>\frac 12 - \frac 1{m-1}$.
However,
Chihara \cite{Chi02} and Christ \cite{Chr03} proved that the Cauchy problem
\begin{equation}
\left\{
\begin{aligned}
&\dt u + i \dx^2 u = \dx (u^m), \\ 
&u|_{t=0} = \phi
\end{aligned}
\right.
\label{NLSC1}
\end{equation}
is ill-posed in $H^s(\T)$ for any $s \in \R$ and $m \in \Z_{\ge 2}$.
See also \cite{AmSi15,DNY21,Hao07,Her06,KNM23,KiTs18,KiTs23,KoOk,Tak99} and references therein
for periodic NLS with a derivative.

When \eqref{NLS} is considered on $\R$,
the situation changes drastically.
Due to the local smoothing effect on $\R$,
it is possible to recover the derivative losses for \eqref{NLS} on $\R$
unless the nonlinearity includes derivative quadratic terms.
In fact,
Kenig, Ponce, and Vega \cite{KPV93} proved
that \eqref{NLS} is well-posed in $H^s(\R)$ for $s>\frac 72$
if $F$ is a cubic or higher-order polynomial.
On the other hand,
Christ \cite{Chr03} proved that
the Cauchy problem
\eqref{NLSC1} with $m=2$
is ill-posed in $H^s(\R)$ for any $s \in \R$.

In this paper,
we establish a necessary and sufficient condition on $F$ for the well-posedness of \eqref{NLS} in $H^s(\T)$,
which is analogous to the Mizohata condition for linear equations.
Before stating the main result, we define some notations.
%First is a solution to \eqref{NLS}.

\begin{definition}
\label{def:sol}
Let $s>\frac 32$, $T > 0$, and $\phi \in H^s(\T)$.
We say that $u$ is a solution to \eqref{NLS} in $H^s(\T)$ on $[0,T]$
if $u$ satisfies the following two conditions:

\begin{enumerate}
\item
$u \in L^\infty ([0,T]; H^s(\T))$;

\item
For any $\chi \in C_c^\infty([0,T) \times \T)$,%
\footnote{Here, $C_c^\infty([0,T) \times \T)$ denotes the space of smooth functions with compact support in $[0,T) \times \T$.
}
we have
\begin{align*}
&- \int_0^T \int_\T u (t,x) \dt \chi(t,x) dx dt
- \int_\T \phi (x) \chi (0,x) dx
\\
&\quad
+ i \int_0^T \int_\T u(t,x) \dx^2 \chi(t,x) dx dt
\\
&=
\int_0^T \int_\T F(u(t,x), \dx u(t,x), \cj{u(t,x)}, \cj{\dx u(t,x)}) \chi (t,x) dx dt.
\end{align*}
\end{enumerate}
A solution on $[-T,0]$ is defined in a similar manner.
\end{definition}

The second condition (ii) in Definition \ref{def:sol} means that $u$ satisfies \eqref{NLS} in the sense of distribution.
The condition $s>\frac 32$, together with Proposition \ref{prop:bili2} below, ensures that the nonlinear term $F( u, \dx u, \cj u , \cj{\dx u})$ is well-defined.

\begin{remark}
\label{rem:sold}
\rm
If $s >\frac 32$ and $u \in C([0,T]; H^s(\T))$ is a solution to \eqref{NLS},
Proposition \ref{prop:bili2} below yields that
\[
F( u, \dx u, \cj u , \cj{\dx u}) \in C([0,T]; H^{s-1}(\T)).
\]
Hence,
$u \in C^1([0,T]; H^{s-2}(\T))$
and
\[
\dt u + i \dx^2 u = F( u, \dx u, \cj u , \cj{\dx u})
\]
holds in $H^{s-2}(\T)$ for $t \in [0,T]$.
\end{remark}

We say that \eqref{NLS} is (locally in time) well-posed in $H^s(\T)$ if the following two conditions are satisfied:
\begin{enumerate}
\item
For any $\phi \in H^s(\T)$,
there exist $T>0$ and a unique solution $u \in C([-T,T]; H^s(\T))$ to \eqref{NLS}.%
\footnote{In general, uniqueness is required in a weaker sense as follows:
There exists a function space $X \subset C([-T,T]; H^s(\T))$ such that a solution $u \in X$ is unique.
Namely, the uniqueness does not need to hold in the whole space $C([-T,T]; H^s(\T))$.
However, 
by Corollary \ref{cor:diffHs-1s} below,
if a solution $u \in C([-T,T]; H^s(\T))$ for $s>\frac 52$ exists,
the uniqueness in the whole space holds in our setting.}

\item
If $\phi_n$ converges to $\phi$ in $H^s(\T)$,
then the corresponding solution $u_n$ to \eqref{NLS} with the initial data $\phi_n$ converges to $u$ in $C([-T,T]; H^s(\T))$.
\end{enumerate}

For $\al = a+ib \in \C$ ($a,b \in \R$),
we write
\[
\frac{\pd F}{\pd \al} := \frac 12 \Big( \frac{\pd F}{\pd a} - i \frac {\pd F}{\pd b} \Big), \quad
\frac{\pd F}{\pd \cj{\al}} := \frac 12 \Big( \frac{\pd F}{\pd a} + i \frac {\pd F}{\pd b} \Big).
\]
We also use the following abbreviations:
\[
F_\al = \frac{\pd F}{\pd \al}, \quad
F_\be = \frac{\pd F}{\pd \be}, \quad
F_{\cj{\al}} = \frac{\pd F}{\pd \cj{\al}}, \quad
\dots. 
\]
The following is the main result in the present paper.

\begin{theorem}
\label{thm:equiv}
Let $s > \frac 52$.
Then, the Cauchy problem \eqref{NLS} is well-posed in $H^s(\T)$ if and only if
\begin{equation}
\int_\T \Im F_\be ( \psi(x),\dx \psi(x),\cj{\psi(x)},\cj{\dx \psi(x)} ) dx = 0
\label{Fbez}
\end{equation}
for any $\psi \in H^s(\T)$.
\end{theorem}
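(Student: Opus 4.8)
The plan is to prove the two implications separately, and the guiding observation is that \eqref{Fbez} is exactly the condition under which the Mizohata gauge transformation is well-defined on $\T$. Indeed, for a candidate solution $u$, the natural change of variables that removes the derivative loss uses the phase $\Theta(t,x) = \int_0^x \Im F_\be(u(t,y),\dx u(t,y),\cj{u(t,y)},\cj{\dx u(t,y)})\,dy$, and this $\Theta$ is single-valued (periodic in $x$) precisely when $\int_\T \Im F_\be\,dx = 0$, i.e.\ when \eqref{Fbez} holds with $\psi = u(t)$. So morally the theorem reduces to: the energy estimate closes if and only if this gauge exists globally on the circle.

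For the sufficiency direction I would start from the a priori energy estimate in $H^s$. Writing $w := \jb{\dx}^s u$ and pairing the $\jb{\dx}^s$-differentiated equation \eqref{NLS} with $w$, the dispersive term $i\dx^2 u$ is skew-adjoint and drops out, while integrating by parts disposes of the perfect-derivative contributions (in particular those coming from $F_{\cj\be}$, which reduce to harmless $\|u\|_{H^s}^2$ terms after moving the derivative onto the coefficient). The only genuinely dangerous, non-removable piece of $\frac{d}{dt}\|u\|_{H^s}^2$ is then, up to factors controlled by powers of $\|u\|_{H^s}$, of the form
\[
\int_\T \Im F_\be(u,\dx u,\cj u,\cj{\dx u})\,\Im\bigl(\cj{w}\,\dx w\bigr)\,dx .
\]
Applying \eqref{Fbez} with $\psi = u(t) \in H^s(\T)$ shows that $x \mapsto \Im F_\be(u(t,x),\dots)$ has zero mean on $\T$, hence admits the periodic primitive $\Theta$ above; conjugating $w$ by $e^{\Theta}$ (a nonlinear, $u$-dependent gauge) cancels this term entirely and yields a closed energy inequality, and thus an a priori bound on $[-T,T]$ with $T$ depending only on $\|\phi\|_{H^s}$. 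Existence then follows from a frequency-truncation or parabolic-regularization scheme together with the compactness supplied by the uniform bound, uniqueness from the $H^{s-1}$ difference estimate behind Corollary \ref{cor:diffHs-1s}, and continuous dependence from a Bona--Smith argument comparing solutions of mollified data at the $H^{s-1}$ level while exploiting the uniform $H^s$ control.

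For the necessity direction, suppose \eqref{Fbez} fails at some $\psi_0 \in H^s(\T)$, so that $b(x) := F_\be(\psi_0,\dx\psi_0,\cj{\psi_0},\cj{\dx\psi_0})$ satisfies $\int_\T \Im b\,dx \neq 0$. Now the primitive $\Theta$ is multivalued, and $e^{\Theta}$ picks up the factor $\exp\bigl(\int_\T \Im b\,dx\bigr)\neq 1$ around the circle, which is the signature of genuine amplification: linearizing \eqref{NLS} about $\psi_0$, a Fourier mode $e^{inx}$ is amplified by a factor of size $\exp(c|n|t)$ with $c = -\tfrac1{2\pi}\int_\T \Im b\,dx \neq 0$, so after fixing the sign of $n$ this diverges as $n\to\infty$, and the linear flow is unbounded on $H^s$ for $t\neq 0$. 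To upgrade this to non-existence I would argue by contradiction: assuming a solution $u \in C([0,T];H^s(\T))$ with $u|_{t=0}=\psi_0$ exists, I would construct a family $\chi_n \in C_c^\infty([0,T)\times\T)$ of highly oscillatory functions that solve, to high accuracy, the backward adjoint of the linearized equation and realize the amplification above, and insert them into the weak formulation (ii) of Definition \ref{def:sol}. The term $\int_\T \psi_0\,\chi_n(0,\cdot)\,dx$ is controlled by $\|\psi_0\|_{H^s}$, while the amplification forces the remaining pairing against $u$ to diverge as $n\to\infty$; since $u \in L^\infty([0,T];H^s)$ bounds that pairing uniformly in $n$, this contradiction shows no such $u$ can exist, and \eqref{NLS} is ill-posed.

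The hard part will be, in both directions, the genuinely quasilinear structure. On the well-posedness side the gauge $\Theta$ depends on the unknown, so the delicate point is not the formal energy cancellation but verifying that the transformed evolution stays well-defined and that the gauge survives the approximation scheme; in particular continuous dependence, rather than mere existence, is where the $H^{s-1}$ difference estimate and the Bona--Smith comparison must carry the load, since a naive contraction at the $H^s$ level is unavailable. On the ill-posedness side the main obstacle is ensuring that the nonlinear feedback does not spoil the linear amplification before the contradiction is reached: the approximate adjoint solutions $\chi_n$ must remain accurate over a time and amplitude scale tuned to $n$ so that the nonlinear error terms stay subordinate to the exponential growth, and handling this interplay carefully is the crux of making the non-existence argument rigorous.
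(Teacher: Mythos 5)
Your sufficiency argument follows essentially the same route as the paper: a gauge transformation built from a spatial primitive of $F_\be$ (the paper gauges $w=\dx^2 u$ by $e^{-\Ld}$ with $\Ld = -\frac i2 \dx^{-1}F_\be$, and uses \eqref{Fbez} only to make the surviving zero mode $P_0 F_\be$ real, hence harmless in the energy pairing), energy estimates for a parabolic regularization, difference estimates at the $H^{s-1}$ level for uniqueness, and a Bona--Smith scheme for continuous dependence. Apart from bookkeeping (gauging $\dx^2 u$ rather than $\jb{\dx}^s u$, and the need for the two-level energies $E_s^\eps$, $E_r^\eps$ to obtain persistence of regularity for the truncated data), this direction is the paper's proof.

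The necessity direction contains a genuine gap, and in fact the argument as designed cannot work. You propose to prove non-existence of a solution with initial datum $\phi=\psi_0$, the very function at which \eqref{Fbez} fails. The remark following Theorem \ref{thm:nonexi} in the paper shows this is false in general: for $F(\al,\be,\cj\al,\cj\be)=(a_0+a_1\al+a_2\cj\al)\be$, every constant function is a global solution, yet a suitable constant $\psi_0=c$ satisfies \eqref{Fbez2}. Your duality scheme is consistent with this counterexample rather than contradicted by it: when $u$ is constant, only the zero Fourier mode of $\chi_n$ couples to $u$, so the pairing you hope to blow up stays bounded and no contradiction appears. The lesson is that failure of \eqref{Fbez} at the initial datum threatens continuous dependence near $\psi_0$, but does not by itself obstruct existence for $\psi_0$. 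This is exactly why the paper proves a conditional smoothing statement instead (Theorem \ref{thm:NE2}: if a solution exists, then $P_-\phi$ or $P_+\phi$ gains $\dl<\frac 12$ derivatives) and then constructs, by an approximation that keeps the sign of $\int_\T \Im F_\be\, dx$ while destroying the one-sided smoothness of the datum (Lemma \ref{lem:NIini1}), a $\phi$ for which no solution can exist.

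Moreover, even for well-chosen data, the step you yourself flag as ``the crux'' --- that the nonlinear errors stay subordinate to the exponential growth --- is precisely what fails naively. The amplification rate is $c|n|$, i.e.\ first order, but the nonlinearity also contributes first-order terms: $(P_{\neq 0}F_\be)\dx u$ and $F_{\cj\be}\,\cj{\dx u}$ act with size $|n|$ on frequency $n$ and carry no smallness. The paper eliminates the first by the gauge transformation and controls the second only by integrating by parts in time, exploiting the dispersive non-resonance $k^2+k_2^2>0$ of the conjugate-derivative interaction (the decomposition $\Nl_{1,2}=\dt \Ml + \Kl$ and Proposition \ref{prop:FGH}); without such a normal-form step the error terms are of the same order as the amplification itself and the contradiction cannot close.
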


For the ill-posedness, we prove the non-existence of solutions to \eqref{NLS} as follows.

\begin{theorem}
\label{thm:nonexi}
Let $s>\frac 52$.
Suppose that there exists $\psi \in H^s (\T)$ such that
\begin{equation}
\int_\T \Im F_\be ( \psi(x),\dx \psi(x),\cj{\psi(x)},\cj{\dx \psi(x)} ) dx \neq 0.
\label{Fbez2}
\end{equation}
Then,
there exists $\phi  \in H^s(\T)$ such that for no $T > 0$
the Cauchy problem \eqref{NLS} has a solution in $C([0,T];H^s(\T))$ or $C([-T,0];H^s(\T))$.
\end{theorem}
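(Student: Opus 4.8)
The plan is to argue by contradiction. Suppose that for the datum $\phi$ constructed below there are a $T>0$ and a solution $u\in C([0,T];H^s(\T))$; the backward case $C([-T,0];H^s(\T))$ will be treated symmetrically. Differentiating $\tfrac12\|\dx^s u(t)\|_{L^2}^2$ in $t$, substituting $\dt u=-i\dx^2 u+F$, and using Proposition \ref{prop:bili2} to control every commutator and product, one finds that the only contribution that cannot be closed by the energy method is the top-order term $\Re\int_\T F_\be\,\dx^{s+1}u\,\cj{\dx^s u}\,dx$, where $F_\be=F_\be(u,\dx u,\cj u,\cj{\dx u})$. Writing $v=\dx^s u$ and splitting $F_\be$ into real and imaginary parts, the $\Re F_\be$ piece is a perfect spatial derivative up to terms bounded by $C(\|u\|_{H^s})\|u\|_{H^s}^2$, and the $F_{\cj\be}\dx^{s+1}\cj u$ contribution is likewise lower order because it integrates to $-\tfrac12\int\dx F_{\cj\be}\,\cj v^2$; the genuinely dangerous piece is $-\int_\T \Im F_\be\,\Im\!\big(\dx v\,\cj v\big)\,dx$, which loses exactly one derivative. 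Its spatially averaged part is the obstruction measured by \eqref{Fbez2}, while its mean-zero part is to be removed by a gauge transformation.

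First I would strip off the mean-zero, $x$-dependent part of the coefficient $\Im F_\be(t,\cdot)$ by the same gauge transformation $u\mapsto w$ used in the well-posedness part: it conjugates the equation so that, at top order, the first-order coefficient is replaced by its spatial mean $m(t):=\tfrac1{2\pi}\int_\T\Im F_\be(u(t,x),\dx u(t,x),\cj{u(t,x)},\cj{\dx u(t,x)})\,dx$, all other terms remaining controllable by $C(\|u\|_{C([0,T];H^s)})$. Since the gauge is a smooth, nonvanishing multiplier-type conjugation, it preserves $H^s(\T)$ and the decay rate of the Fourier tail. I would then take $\phi=\psi+\dl\, g$ with $\dl>0$ small and $g\in H^s(\T)$ having a two-sided, slowly decaying tail, for instance $\ft g(k)\sim e^{-\sqrt{|k|}}$ as $|k|\to\infty$. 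Then $\phi\in H^s(\T)$ is arbitrarily close to $\psi$, and since $\phi\mapsto\int_\T\Im F_\be(\phi,\dx\phi,\cj\phi,\cj{\dx\phi})\,dx$ is continuous on $H^s(\T)$ for $s>\tfrac32$ and $u\in C([0,T];H^s(\T))$, the mean $m(t)$ stays bounded away from $0$ with a fixed sign on some subinterval $[0,T']\subset[0,T]$.

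On $[0,T']$ the gauged variable $w$ then satisfies, to leading order, $\dt w+i\dx^2 w=i\,m(t)\,\dx w+(\text{lower order})$, whose constant-coefficient model $\dt W+i\dx^2 W=i\,m_0\,\dx W$ has the explicit Fourier solution $\ft W(t,k)=e^{ik^2t}e^{-m_0 k t}\ft W(0,k)$, so $|\ft W(t,k)|=e^{-m_0 k t}|\ft W(0,k)|$. For $m_0>0$ the modes with $k<0$ grow like $e^{m_0|k|t}$, and the negative-frequency tail $\sim e^{-\sqrt{|k|}}$ of the datum forces $W(t)\notin H^s(\T)$ for every $t\in(0,T']$; running time backward, the positive-frequency tail blows up instead. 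I would transfer this blow-up from $W$ to $w$, hence to $u$, by a perturbation argument that controls the difference $w-W$ through the difference estimate underlying Corollary \ref{cor:diffHs-1s}: the remainder in the $w$-equation loses at most one derivative by Proposition \ref{prop:bili2} and is subordinate to the explicit exponential-in-frequency growth, so $w-W$ stays in $H^s(\T)$ while $W$ leaves it. Because $\phi$ carries a slowly decaying tail on both sides of the frequency axis, the forward argument (negative frequencies) and the backward argument (positive frequencies) each contradict $u\in C([0,T];H^s(\T))$, ruling out a solution in either time direction for this single $\phi$.

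The main obstacle is precisely this transfer step: the coefficient $m(t)$ and every remainder depend on the unknown $u$ itself, and the variable-coefficient and nonlinear terms couple distinct Fourier modes, so a priori they might feed, or conversely cancel, the high-frequency growth of the chosen tail. The gauge transformation is introduced exactly to eliminate the only mode-coupling term that is not of lower order; what remains must then be shown, via Proposition \ref{prop:bili2} and the difference estimate of Corollary \ref{cor:diffHs-1s}, to be strictly subordinate to the factor $e^{m_0|k|t}$, so that it cannot prevent the negative-frequency (respectively positive-frequency) part of $u(t)$ from leaving $H^s(\T)$. Making this subordination quantitative and uniform in $k$ is the technical heart of the argument, and the uniqueness in $C([0,T];H^s(\T))$ furnished by Corollary \ref{cor:diffHs-1s} is what makes the gauge construction and the comparison solution unambiguous.
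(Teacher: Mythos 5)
Your overall mechanism (gauge away the mean-zero part of $\Im F_\be$, then exploit the exponential growth/decay that the operator $\dt+i\dx^2-m(t)\dx$ produces on one-signed frequencies) is indeed the mechanism behind the paper's proof, but the two steps you yourself flag as the ``technical heart'' are genuine gaps, and as set up they cannot be closed. First, your forward-in-time comparison requires a \emph{lower} bound on $|\ft w(t,k)|$ as $k\to-\infty$: the model term $e^{m_0|k|t}|\ft W(0,k)|$ must dominate the Duhamel contribution of the remainders. But the remainders depend on $u$ only through Sobolev norms, so the best possible bound on their Fourier coefficients is polynomial; in particular the gauged equation still contains the derivative term $\Ta_{\cj\be}e^{-\Ld+\cj\Ld}\cj{\dx W}$, for which Proposition \ref{prop:bili2} gives only $\les\jb{k}^{-(s-3)}$, and its Duhamel integral against the growing factor is $\les e^{m_0|k|t}\jb{k}^{-(s-2)}$. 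With your choice $\ft g(k)\sim e^{-\sqrt{|k|}}$ (note such $g$ is $C^\infty$, so $\phi=\psi+\dl g$ has exactly the regularity of $\psi$), the ``main'' term $e^{m_0|k|t}\jb{k}^{2}e^{-\sqrt{|k|}}$ is \emph{exponentially smaller} than this remainder bound, so the remainder can cancel it entirely: the subordination you need goes the wrong way, and no choice of perturbative estimate can reverse it. The paper avoids lower bounds altogether by running the ODE for $\ft\fw$ \emph{backward}: assumed existence of a solution forces the upper bound $\jb{k}^{s-1}|\ft \fw(0,k)|\le C$ for $k<0$, i.e.\ the smoothing $P_-\phi\in H^{s+\dl}$ of the \emph{data} (Theorem \ref{thm:NE2}), and then Lemma \ref{lem:NIini1} produces data with lacunary \emph{polynomial} tails $k^{-s-\dl}$ on both sides of the frequency axis violating this conclusion. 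That formulation needs only upper bounds on the nonlinearity, which is why it closes.

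Second, even for those upper bounds your cited tools are insufficient. The derivative-loss term $\Ta_{\cj\be}\cj{\dx W}$ estimated via Proposition \ref{prop:bili2} gives $|\Nl_1(t,k)|\les\jb{k}^{-(s-3)}$, and after integrating the ODE this yields only $\jb{k}^{s-2}|\ft\fw(0,k)|\le C$, i.e.\ $P_-\phi$ in a space \emph{below} $H^s$ --- no smoothing, hence no contradiction with any admissible data. The paper's essential extra idea, absent from your proposal, is integration by parts in time (a normal form) on the part of this term where the coefficient carries low frequency: because the term involves the \emph{conjugate} $\cj{\ft\fw}$, the oscillatory phase is $e^{-i(k^2+k_2^2)t}$, which is non-resonant ($k^2+k_2^2\ges k^2$), and dividing by it converts the one-derivative loss into a gain (the decomposition $\Nl_{1,2}=\dt\Ml+\Kl$ and Proposition \ref{prop:FGH}). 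Corollary \ref{cor:diffHs-1s}, which you invoke for the transfer, is a difference estimate at the level $H^{s-1}$ and plays no role in controlling frequency-localized growth. Without this dispersive, non-resonance input --- which the paper explicitly identifies as the reason the ill-posedness cannot be read off from first-order ODE reasoning alone --- neither your forward comparison nor the backward smoothing estimate can be completed.
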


Here, we give examples of nonlinearities:
\begin{enumerate}
\item[(a)]
The nonlinearity in \eqref{NLDG1} corresponds to
$F(\al,\be, \cj \al, \cj \be) = m \cj \al^{m-1} \cj \be$.
Since $F_\be (\al,\be, \cj \al, \cj \be) = 0$,
the condition \eqref{Fbez} holds.

\item[(b)]
The derivative NLS
\[
\dt u + i \dx^2 u = \dx (|u|^2 u)
\]
corresponds to $F(\al, \be, \cj \al, \cj \be) = 2 |\al|^2 \be + \al^2 \cj \be$.
Since
$F_\be(\al, \be, \cj \al, \cj \be) = 2|\al|^2$,
the condition \eqref{Fbez} holds.

If we put $i$ in front of the nonlinearty,
the structure changes.
Indeed,
\[
\dt u + i \dx^2 u
=
i \dx (|u|^2 u)
\]
corresponds to $F(\al, \be, \cj \al, \cj \be) = i (2 |\al|^2 \be + \al^2 \cj \be)$.
Since
$F_\be(\al, \be, \cj \al, \cj \be) = 2i |\al|^2$,
the condition \eqref{Fbez2} holds with $\psi(x)=1$.

\item[(c)]
The nonlinearity in \eqref{NLSC1} corresponds to $F(\al,\be, \cj \al, \cj \be) = m \al^{m-1} \be$.
Since $F_\be (\al,\be, \cj \al, \cj \be) = m \al^{m-1}$,
the condition \eqref{Fbez2} holds with $\psi (x)=1$.
Hence, Theorem \ref{thm:nonexi} applies to \eqref{NLSC1}.
In particular, it implies that the flow map is NOT well-defined in $H^s(\T)$ for $s>\frac 52$.
\end{enumerate}

The unique existence of a solution to \eqref{NLS} was already proved in \cite{Chi02}.
In fact,
Chihara \cite{Chi02} proved the existence of a unique solution to \eqref{NLS} under the following condition:%
\footnote{Chihara treated not only for polynomials $F$ but also for smooth functions.
See also Remark \ref{rem:notPol} below for a comment regarding the case when $F$ is not a polynomial.
}
There exists a smooth real-valued function $\Phi (\psi, \cj \psi)$ such that
\[
\dx (\Phi (\psi, \cj{\psi}))
= \Im F_\be (\psi, \dx \psi, \cj{\psi}, \cj{\dx \psi})
\]
for any $\psi \in C^1(\T)$.
This condition is equivalent to
\eqref{Fbez} for any $\psi \in C^1(\T)$.
Indeed,
if $F$ satisfies \eqref{Fbez},
we can choose $\Phi$ as
$\dx^{-1} \Im F_\be (\psi, \dx \psi, \cj{\psi}, \cj{\dx \psi})$,
where the operator $\dx^{-1}$ is defined in \eqref{defande} below.
However,
Chihara showed that the continuous dependence of the initial data was in a weaker sense.
Specifically,
he showed that the sequence $\{ u_n \}_{n \in \N}$ of solutions to \eqref{NLS} converges to $u$ in $C([-T,T]; H^{s'}(\T))$ for any $s'<s$.
In contrast,
for the well-posedness result in Theorem \ref{thm:equiv},
we prove that the sequence $\{ u_n \}_{n \in \N}$ converges to $u$ in $C([-T,T]; H^s(\T))$.

\begin{remark}
\rm
By an approximation argument,
the condition \eqref{Fbez} for ``any $\psi \in H^s(\T)$ when $s>\frac 52$'' is equivalent to that for ``any $\psi \in C^\infty(\T)$''.
In particular, our condition is equivalent to the one in Chihara's paper \cite{Chi02}.
\end{remark}

For the well-posedness of \eqref{NLS} in $H^s(\T)$,
we apply a Bona-Smith-type approximation argument as in \cite{BoSm75}.
To ensure the existence of approximate smooth solutions on the time interval $[-T,T]$,
where $T$ depends on $\| \phi \|_{H^s}$ but is independent of the approximation,
we rely on the persistence of regularity.
See Remark \ref{rem:pers} for the persistence.
Moreover,
establishing the persistence of regularity
requires energy estimates not only in $H^s(\T)$ but also in $H^r(\T)$ for some $r>s$.
Accordingly,
we derive such energy estimates in this paper,
while the existence part of the solution follows the approach in \cite{Chi02}.

To reveal the most troublesome term in the nonlinearity,
we consider the system of $u$, $v := \dx u$, and $w := \dx^2 u$.
The equation for $w$ is given by
\begin{align*}
\dt w + i \dx^2 w
&=
F_\be (u,v, \cj u, \cj v) \dx w
+ F_{\cj \be} (u,v, \cj u, \cj v) \cj{\dx w}
\\
&\quad
+ \text{(a polynomial in $u,v,w, \cj u, \cj v, \cj w$)}.
\end{align*}
We can not directly apply an energy estimate to the first term on the right-hand side.
In contrast, the second term is less problematic.
Thus,
we employ a gauge transformation to cancel out the troublesome term.
See, for example,
\cite{HaOz92,Her06,Tak99}
on gauge transformations.
However, since the gauge transformation can not eliminate the constant term,
we use the condition \eqref{Fbez} to ensure that the constant vanishes.
Specifically,
by setting
\[
W := \exp \Big( - \frac i2 \dx^{-1} F_\be (u,v,\cj u, \cj v) \Big) w,
\]
the equation for $W$ becomes
\begin{equation}
\dt W + i \dx^2 W
=
\int_\T  F_\be (u,v, \cj u, \cj v) dx
\cdot \dx W
+ \text{(other terms)}.
\label{eq:Wgg}
\end{equation}
Using \eqref{Fbez}, we can apply the energy estimate to this equation.

To apply the energy method for $W$,
we require the lower bound $\frac 52 = 2 + \frac 12$ for the regularity in Theorem \ref{thm:equiv}.
Although this regularity threshold is not optimal (see \eqref{NLDG1}, for example),
our main goal in this paper is to establish the nonlinear version of the Mizohata condition.
Therefore, we do not pursue optimal regularity here.

If \eqref{Fbez} does not hold,
the term $i \int_\T \Im F_\be (u,v, \cj u ,\cj v) dx$ remains
in the first part on the right-hand side of \eqref{eq:Wgg}.
Then,
roughly speaking,
the Cauchy-Riemann-type elliptic operator $\dt + i a \dx$ (for some $a \in \R \setminus \{ 0 \}$) appears in \eqref{eq:Wgg}.
This observation suggests the ill-posedness of \eqref{NLS}.

For the proof of the ill-posedness,
we apply integration by parts, as in \cite{KiTs18}, combined with the gauge transformation.
Moreover,
we give a sufficient condition on the initial data for the non-existence of solutions.
See Theorem \ref{thm:NE2} below.
Using an approximation argument,
we then construct $\phi \in H^s(\T)$ such that
no corresponding solution to \eqref{NLS} exists when \eqref{Fbez2} holds.

Since the Cauchy-Riemann-type elliptic operator is of first order
and ``(other terms)'' in \eqref{eq:Wgg} contain nonlinear terms with a derivative,
we rely on the dispersive nature in the proof of the ill-posedness.
See also Remark 2.8 in \cite{KiTs18}.
In particular, we need to show that
the dispersive effect does not vanish
for the nonlinear interactions involving derivative nonlinear terms in  ``(other terms)'' of \eqref{eq:Wgg}.

\begin{remark}
\rm
In Theorem \ref{thm:nonexi},
we can not take $\phi =\psi$ in general.
Indeed,
when $F(\al, \be, \cj \al, \cj \be) = (a_0 + a_1 \al + a_2 \cj \al) \be$ for some $a_0,a_1,a_2 \in \C$,
the corresponding \eqref{NLS} is
\[
\left\{
\begin{aligned}
&\dt u + i \dx^2 u = (a_0+a_1 u + a_2 \cj u) \dx u, \\ 
&u|_{t=0} = \phi.
\end{aligned}
\right.
\]
In this case,
any constant function is a solution.
Here,
the condition \eqref{Fbez2} becomes
\[
0 \neq
\int_\T \Im (a_0 + a_1 \psi + a_2 \cj{\psi}) dx
= \Im \int_\T \big( a_0 + (a_1-\cj{a_2}) \psi \big) dx.
\]
Let $c$ be a constant such that $\Im (a_0 + (a_1-\cj{a_2}) c) \neq 0$.
Then,
$\psi (x) =c$ satisfies \eqref{Fbez2},
but $u(t,x) = \psi(x) = c$ is a solution.
\end{remark}

Recently,
Tsugawa \cite{Tsu} independently obtained results similar to those in Theorem \ref{thm:equiv}.
In the proof of the well-posedness part,
he mainly used energy estimates for linear Schr\"odinger equations with variable coefficients.
In contrast, we directly apply the energy method to \eqref{NLS} and its parabolic regularized equations.

Additionally, Tsugawa \cite{Tsu} used an energy estimate to derive a parabolic smoothing effect,
which leads to the non-existence result.
See also \cite{Roy22, TaTs22, Tsu16, Tsu17a} for parabolic smoothing effects in other dispersive equations.
As mentioned above,
we employ integration by parts, as in \cite{KiTs18}, combined with the gauge transformation to obtain the non-existence result.

\begin{remark}
\rm
Even if \eqref{NLS} is ill-posed in $H^s(\T)$,
replacing $H^s(\T)$ with another function space
might result in \eqref{NLS} being well-posed in that space.
For instance,
Chung et al. \cite{CGKO17} proved the small-data global well-posedness for 
\eqref{NLSC1} with $m=2$
in the Sobolev space under the mean-zero assumption
\[
%H^s_0(\T) :=
\bigg\{ f \in H^s(\T) \mid \int_\T f dx  =0 \bigg\}
\]
for $s \ge 0$.
Moreover,
when $F$ is independent of $\cj \al$ and $\cj \be$,
Nakanishi and Wang \cite{NaWa} proved that
\eqref{NLS} is globally well-posed in a space of distributions whose Fourier support is in the half-space.

As suggested by these results,
given $F$,
it may be possible to choose an appropriate function space $X$ in which \eqref{NLS} is well-posed.
However,
since the choice of such $X$ heavily depends on the structure of $F$,
we do not address this issue in this paper.
\end{remark}

\begin{remark}
\label{rem:notPol}
\rm
To avoid some technical difficulties,
such as composition estimates for fractional derivatives and the regularity of smoothly approximated solutions,
we restrict our consideration to polynomial $F$.
However,
when focusing solely on the well-posedness in $H^3(\T)$,
our proof remains valid when $F$ is at least of class $C^7$.
Indeed, under this assumption,
for $\phi \in H^3(\T)$,
the regularized equation \eqref{RNLS} has the solution
$u^\eps \in C([0,T(\eps)]; H^3(\T)) \cap C ((0,T(\eps)]; H^6(\T))$.
Here, we recall that, given $s \in \R$ and $T>0$,
\[
\dt \| f(t) \|_{H^s}^2
= 2 \Re \big( \jb{\dx}^{s-1} \dt f(t), \jb{\dx}^{s+1} f(t) \big)_{L^2}
\]
holds for $t \in (0,T)$,
when $f \in C((0,T); H^{s+1}(\T)) \cap C^1((0,T); H^{s-1}(\T))$.
Thus,
as shown in Propositions \ref{prop:dtEne} and \ref{prop:diffHs-1} below,
the energy estimates for $\| u^\eps (t) \|_{H^5}^2$ and $\| u^{\eps_1} (t)- u^{\eps_2} (t) \|_{H^4}^2$ hold.
Consequently,
there exist $T>0$ and a solution $u \in L^\infty([0,T]; H^3(\T))$ to \eqref{NLS}.
Moreover, the persistence of regularity holds:
if $\phi \in H^5(\T)$, then $u \in C([0,T]; H^4(\T)) \cap C^1([0,T]; H^2(\T))$.
Since
this regularity is sufficient to yield Proposition \ref{prop:diffNM} with $s=3$,
our argument proceeds as expected.
\end{remark}

The plan of this paper is as follows.
In Section \ref{Sec:bilin},
we give bilinear estimates needed for the proof of Theorem \ref{thm:equiv}.
In Section \ref{Sec:WP},
we prove the well-posedness part of Theorem \ref{thm:equiv} by using energy estimates.
In Section \ref{Sec:NE},
we prove Theorem \ref{thm:nonexi}, which is the ill-posedness part of Theorem \ref{thm:equiv}.

We conclude this section with notations.
When $f \in L^1(\T)$, we set
\begin{align*}
\int_\T f(x) dx
&:=
\frac 1{2\pi} \int_{-\pi}^\pi f(x) dx,
\\
\ft f(k)
&:= \int_\T f(x) e^{-ikx} dx
\end{align*}
for $k \in \Z$.
We also denote by $\ft f(k)$ or $\Ft [f](k)$ the Fourier coefficient of a distribution $f$ on $\T$.
Moreover, we define
\[
\jb{\dx}^s f (x) := \sum_{k \in \Z} \jb{k}^s \ft f(k) e^{ikx}.
%\Ft [\jb{\dx}^s f] (k) := \jb{n}^s \ft f(k)
\]
for $s \in \R$,
where $\jb{k} := \sqrt{1+|k|^2}$.

For a distribution $f$ on $\T$,
we define
\begin{align}
P_0 f
&:=
\ft f(0),
%= \int_{\T} f(x) dx,
\label{defP0}
\qquad
P_{\neq 0} f
:=
f - P_0 f,
\\
\dx^{-1} f (x)
&:=
\sum_{k \in \Z \setminus \{0\}} \frac1{ik} \ft f(k) 
e^{ikx}.
\label{defande}
\end{align}
Note that $\dx (\dx^{-1} f) = \dx^{-1} (\dx f) = P_{\neq 0} f$.
%Moreover, we have
%\[
%\dx^{-1} f (x)
%=
%\int^x_0 (P_{\neq 0} f)(y) dy
%+
%\sum_{k \in \Z \setminus \{0\}} \frac1{ik} \ft f(k) 
%\]
%for $f \in L^1(\T)$.
Moreover, we also define the operators
\begin{align*}
%P_+ f (x) := \sum_{\substack{k \in \N}} \ft f(k) e^{ikx},
%\quad
P_+ f (x) &:= \sum_{\substack{k \in \N}} \ft f(k) e^{ikx},
&
P_- f (x) &:= \sum_{\substack{k \in \N}} \ft f(-k) e^{-ikx},
\end{align*}
where $\N$ denotes the set of positive integers.

We use $A \les B$ to denote $A \le C B$ with a constant $C>0$.
We also use $A \sim B$ to denote $A \les B$ and $A \ges B$.

\section{Bilinear estimates}
\label{Sec:bilin}

We use the following bilinear estimate.

\begin{proposition}
\label{prop:bili}
Let $s_0,s_1,s_2 \in \R$ satisfy
\[
\min (s_0+s_1, s_1+s_2, s_2+s_0) \ge 0 , \quad
s_0+s_1+s_2 > \frac 12
\]
or
\[
\min (s_0+s_1, s_1+s_2, s_2+s_0) > 0 , \quad
s_0+s_1+s_2 \ge \frac 12.
\]
Then, we have
\[
\| fg \|_{H^{-s_0}}
\les \| f \|_{H^{s_1}} \| g \|_{H^{s_2}}
\]
for $f \in H^{s_1}(\T)$ and $g \in H^{s_2}(\T)$.
\end{proposition}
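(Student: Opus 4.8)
The plan is to move to the Fourier side and prove the estimate by duality. Since $H^{-s_0}(\T)$ is the dual of $H^{s_0}(\T)$ under the $L^2$-pairing and $\ft{fg}(k)=\sum_{k_1+k_2=k}\ft f(k_1)\ft g(k_2)$, the claim is equivalent to the symmetric trilinear bound
\[
\Big|\sum_{k_1+k_2+k_3=0}\ft f(k_1)\,\ft g(k_2)\,\ft h(k_3)\Big|\les \|f\|_{H^{s_1}}\|g\|_{H^{s_2}}\|h\|_{H^{s_0}}.
\]
Setting $a_k=\jb{k}^{s_1}|\ft f(k)|$, $b_k=\jb{k}^{s_2}|\ft g(k)|$, and $c_k=\jb{k}^{s_0}|\ft h(k)|$, which are nonnegative sequences in $\ell^2(\Z)$ with $\|a\|_{\ell^2}=\|f\|_{H^{s_1}}$ and so on, it then suffices to prove
\[
\sum_{k_1+k_2+k_3=0}\frac{a_{k_1}b_{k_2}c_{k_3}}{\jb{k_1}^{s_1}\jb{k_2}^{s_2}\jb{k_3}^{s_0}}\les \|a\|_{\ell^2}\|b\|_{\ell^2}\|c\|_{\ell^2}.
\]

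Because the hypotheses on $(s_0,s_1,s_2)$ are symmetric, I split the sum into the three regions on which $\jb{k_1}$, $\jb{k_2}$, or $\jb{k_3}$ is smallest and, after relabeling, treat only the region where $\jb{k_3}$ is the smallest. There the relation $k_1+k_2=-k_3$ forces the two larger frequencies to be comparable, $\jb{k_1}\sim\jb{k_2}$, so the weight reduces to $\jb{k_1}^{-s_1}\jb{k_2}^{-s_2}\sim\jb{k_2}^{-(s_1+s_2)}$ irrespective of the sign of $s_1$. Taking $(k_1,k_3)$ as independent variables (with $k_2=-(k_1+k_3)$) and applying Cauchy--Schwarz so as to absorb $a_{k_1}c_{k_3}$ into $\|a\|_{\ell^2}\|c\|_{\ell^2}$, the estimate is reduced to the uniform bound $\jb{k_2}^{-2(s_1+s_2)}L(k_2)\les 1$, where $L(k_2):=\sum_{\jb{k_3}\les\jb{k_2}}\jb{k_3}^{-2s_0}$ collects the admissible low-frequency contributions.

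It then remains to evaluate $L$. When $s_0\ge0$ one has $L(k_2)\les\jb{k_2}^{(1-2s_0)_+}$ (up to a logarithm at $s_0=\tfrac12$), while for $s_0<0$ I first bound $\jb{k_3}^{-s_0}\les\jb{k_2}^{-s_0}$ and use $\#\{k_3:\jb{k_3}\les\jb{k_2}\}\les\jb{k_2}$. In every case the resulting power of $\jb{k_2}$ is nonpositive exactly when $s_0+s_1+s_2\ge\tfrac12$ together with $s_1+s_2\ge0$, and both of these are guaranteed by either alternative hypothesis. The delicate point, where I expect the real work to lie, is the borderline behavior: the power of $\jb{k_2}$ can vanish (at $s_0+s_1+s_2=\tfrac12$) or the estimate of $L$ can leave a logarithm (at $s_0=\tfrac12$), and one must check that in each such boundary case some pairwise sum is \emph{strictly} positive and supplies the missing negative power of $\jb{k_2}$ that absorbs the logarithm. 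Verifying that the two stated alternative sets of hypotheses precisely cover all these borderline configurations is the main bookkeeping obstacle of the argument.
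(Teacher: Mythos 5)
Your proof is correct, but it takes a genuinely different route from the paper, for the simple reason that the paper contains no proof of this proposition: it cites Corollary 3.16 of \cite{Tao01}, which is stated and proved on $\R$, and asserts that the periodic case follows by a straightforward modification. Your duality reduction, smallest-frequency decomposition, and Cauchy--Schwarz argument is a self-contained elementary proof carried out directly on $\Z$ --- in effect, it is what ``straightforward modification'' has to be unpacked into. What it buys is independence from the multilinear weighted-convolution machinery of \cite{Tao01}; what it costs is the endgame case analysis you flag. Two points to confirm that your argument actually closes. First, the reduction by symmetry to the region where the frequency carrying $s_0$ is smallest is legitimate, since both the trilinear form and both alternative hypotheses are invariant under permuting the three indices, and on that region $\jb{k_1}\sim\jb{k_2}$ does hold, so the weight collapses as you claim. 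Second, the ``bookkeeping'' you defer works out exactly as you predict: with $L(k_2)=\sum_{\jb{k_3}\les\jb{k_2}}\jb{k_3}^{-2s_0}$ one has $L(k_2)\les 1$ when $s_0>\frac12$, $L(k_2)\les\log(2+\jb{k_2})$ when $s_0=\frac12$, and $L(k_2)\les\jb{k_2}^{1-2s_0}$ when $s_0<\frac12$ (including $s_0<0$, by your counting bound). The required uniform estimate $\jb{k_2}^{-2(s_1+s_2)}L(k_2)\les1$ then follows because: (a) for $s_0>\frac12$, either hypothesis gives $s_1+s_2\ge0$; (b) for $s_0=\frac12$, either hypothesis forces the strict inequality $s_1+s_2>0$ (from $s_0+s_1+s_2>\frac12$ in the first alternative, directly from the pairwise condition in the second), and the strictly negative power absorbs the logarithm; (c) for $s_0<\frac12$, either hypothesis gives $s_0+s_1+s_2\ge\frac12$, so the exponent $1-2s_0-2(s_1+s_2)$ is nonpositive and no logarithm is present. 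In particular, the two degenerate phenomena --- vanishing exponent and logarithmic loss --- can never occur simultaneously under either set of hypotheses, which is precisely the role played by the two alternatives in the statement; so what you call the main obstacle is a three-line check, and your proof is complete.
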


See Corollary 3.16 on page 855 in \cite{Tao01}.
While the bilinear estimate on $\R$ is proved in \cite{Tao01},
Proposition \ref{prop:bili} follows from a straightforward modification of the proof.

Moreover, the following bilinear estimate holds true.
See Lemma B.1 in \cite{II01} for example.

\begin{proposition}
\label{prop:bili2}
Let $s \ge 0$ and $r>\frac 12$.
Then, we have
\[
\| fg \|_{H^s}
\les \| f \|_{H^s} \| g \|_{H^r} + \| f \|_{H^r} \| g \|_{H^s}
\]
for any $f,g \in H^s(\T) \cap H^r(\T)$.
\end{proposition}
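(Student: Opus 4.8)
The plan is to reduce everything to the Fourier side and apply Young's convolution inequality. Writing $\ft{(fg)}(k) = \sum_{k_1+k_2=k} \ft f(k_1) \ft g(k_2)$, the starting point is the elementary subadditivity $\jb{k}^s \les \jb{k_1}^s + \jb{k_2}^s$, valid for $s \ge 0$ whenever $k = k_1 + k_2$. One proves this by assuming without loss of generality that $|k_1| \ge |k_2|$, so that $|k| \le |k_1|+|k_2| \le 2|k_1|$ and hence $\jb{k} \les \jb{k_1}$, which gives $\jb{k}^s \les \jb{k_1}^s \le \jb{k_1}^s + \jb{k_2}^s$. Distributing this over the convolution yields the pointwise bound
\[
\jb{k}^s |\ft{(fg)}(k)| \les \sum_{k_1+k_2=k} \jb{k_1}^s |\ft f(k_1)| \, |\ft g(k_2)| + \sum_{k_1+k_2=k} |\ft f(k_1)| \, \jb{k_2}^s |\ft g(k_2)|.
\]

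Next I would estimate each term as a convolution of nonnegative sequences. For the first, set $F(k) := \jb{k}^s |\ft f(k)|$ and $G(k) := |\ft g(k)|$. Young's inequality gives $\| F * G \|_{\ell^2(\Z)} \le \| F \|_{\ell^2(\Z)} \| G \|_{\ell^1(\Z)}$, where $\| F \|_{\ell^2(\Z)} = \| f \|_{H^s}$. The key step, and the only place the hypothesis $r > \frac12$ enters, is to control the $\ell^1$ norm of $G$ by the $H^r$ norm of $g$: by Cauchy--Schwarz,
\[
\sum_{k \in \Z} |\ft g(k)| = \sum_{k \in \Z} \jb{k}^{-r} \jb{k}^{r} |\ft g(k)| \le \Big( \sum_{k \in \Z} \jb{k}^{-2r} \Big)^{1/2} \| g \|_{H^r},
\]
and the prefactor is finite precisely because $2r > 1$. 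This bounds the first term by $\| f \|_{H^s} \| g \|_{H^r}$.

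The second term is handled symmetrically, with the roles of $f$ and $g$ exchanged, producing the bound $\| f \|_{H^r} \| g \|_{H^s}$. Taking the $\ell^2(\Z)$ norm in $k$ of the pointwise inequality above and using the triangle inequality $\| a + b \|_{\ell^2} \le \| a \|_{\ell^2} + \| b \|_{\ell^2}$ then gives
\[
\| fg \|_{H^s} = \big\| \jb{k}^s \ft{(fg)}(k) \big\|_{\ell^2(\Z)} \les \| f \|_{H^s} \| g \|_{H^r} + \| f \|_{H^r} \| g \|_{H^s},
\]
as claimed. The main obstacle, and really the only subtle point, is the threshold $r > \frac12$: it is exactly the summability of $\jb{k}^{-2r}$ over $\Z$, reflecting the borderline failure of the embedding $H^{1/2}(\T) \hookrightarrow L^\infty(\T)$ and hence the necessity of strictly more than half a derivative on the low-regularity factor.
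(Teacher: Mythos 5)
Your proof is correct. Note that the paper does not prove this proposition at all --- it simply cites Lemma B.1 in \cite{II01} --- so your self-contained argument fills that gap; it is the standard tame product estimate proof (subadditivity of $\jb{k}^s$ over the convolution, Young's inequality $\ell^1 * \ell^2 \to \ell^2$, and Cauchy--Schwarz with $\sum_k \jb{k}^{-2r} < \infty$ to control the $\ell^1$ norm of the low-regularity factor), which is essentially the argument in the cited reference and consistent with the technique the paper itself uses elsewhere, e.g.\ in the proof of Lemma \ref{lem:bili3}.
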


We also use the following bilinear estimate.

\begin{lemma}
\label{lem:bili3}
Let $s \ge 0$ and $r>\frac 12$.
Then, we have
\[
\| P_+(f P_- g) \|_{H^s} + \| P_-(f P_+ g) \|_{H^s}
\les \| f \|_{H^s} \| g \|_{H^r}
\]
for any $f \in H^s(\T)$ and $g \in H^r(\T)$.
\end{lemma}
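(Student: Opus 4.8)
The plan is to exploit the key structural feature of the operators $P_+(f P_- g)$ and $P_-(f P_+ g)$: they involve \emph{opposing} frequency localizations, which forces a near-cancellation in the output frequency and thereby allows us to place the derivative loss entirely on the low-regularity factor $g$. Concretely, examine the Fourier support. If we write $f = \sum_m \ft f(m) e^{imx}$ and $P_- g = \sum_{n \ge 1} \ft g(-n) e^{-inx}$, then a typical frequency of $f \cdot P_- g$ is $k = m - n$ with $m \in \Z$ and $n \ge 1$. Applying $P_+$ retains only those terms with $k \ge 1$, i.e. $m - n \ge 1$, which forces $m \ge n+1 \ge 2$. In particular, on the support of $P_+(f P_- g)$ we have $|m| \ge |n|$, so the frequency of the $f$-factor dominates. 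This is the mechanism that lets the $H^s$-derivatives fall on $f$ rather than on $g$.

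First I would pass to Fourier coefficients and estimate $\jb{k}^s$ by $\jb{m}^s$ on the relevant support. Writing out $\Ft[P_+(f P_- g)](k) = \sum_{m - n = k,\, n \ge 1} \ft f(m) \ft g(-n)$, the constraint $k \ge 1$ together with $n \ge 1$ gives $m = k + n \ge 2$, so $\jb{m} \sim m \ge n \sim \jb{n}$ and hence $\jb{k}^s \le \jb{m}^s$ (using $s \ge 0$ and $k \le m$). This reduces the problem to the bound
\begin{equation*}
\Big\| \sum_{\substack{m - n = k \\ n \ge 1}} \jb{m}^s |\ft f(m)| \, |\ft g(-n)| \Big\|_{\l^2_k}
\les \| \jb{m}^s \ft f \|_{\l^2_m} \, \| \jb{n}^r \ft g(-\cdot) \|_{\l^2_n}.
\end{equation*}
This is now a discrete convolution estimate: setting $a_m = \jb{m}^s |\ft f(m)|$ and $b_n = \jb{n}^r |\ft g(-n)|$, the right-hand side is a convolution of $a$ with $\jb{n}^{-r} b_n$, and Young's inequality (or Cauchy--Schwarz exploiting that $\jb{n}^{-r} \in \l^2$ since $r > \tfrac12$) closes it. The term $P_-(f P_+ g)$ is handled by the identical argument after reflecting $k \mapsto -k$, $n \mapsto -n$; alternatively one notes $P_-(f P_+ g) = \overline{P_+(\cj f \, P_- \cj g)}$ and invokes the first bound.

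The main obstacle, or rather the point requiring genuine care, is justifying the frequency-domination inequality $\jb{k}^s \les \jb{m}^s$ uniformly on the support, and in particular handling the borderline contributions where $m$ and $n$ are comparable in size (so that $k$ is small but $m, n$ are large). Here the summation constraint $m - n = k$ with $n \ge 1$ is what rescues us: even when $k$ is small, the derivative weight $\jb{k}^s$ is harmlessly bounded by $\jb{m}^s$, and the summation over $n$ is absorbed by the $\jb{n}^{-r}$ gain from moving $r$ derivatives onto the $g$-factor. One must also verify that no logarithmic divergence arises at the diagonal $m \sim n$, but the strict inequality $r > \tfrac12$ provides the needed $\l^2$-summability of $\jb{n}^{-r}$, so the Cauchy--Schwarz step is clean. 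Once these elementary convolution bounds are in place, the proposition follows immediately.
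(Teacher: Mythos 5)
Your proof is correct and follows essentially the same route as the paper: both hinge on the observation that the opposing projections force the $f$-frequency to dominate the output frequency (so $\jb{k}^s \le \jb{m}^s$ for $s \ge 0$), and both close the resulting convolution bound using the $\l^2$-summability of $\jb{n}^{-r}$ for $r > \frac12$ (the paper via Cauchy--Schwarz in the convolution variable followed by Fubini, you via Cauchy--Schwarz plus Young's inequality $\l^2 * \l^1 \to \l^2$, which is the same estimate organized in the opposite order). Your symmetry reduction $P_-(f P_+ g) = \overline{P_+(\cj f\, P_- \cj g)}$ for the second term is also valid and matches the paper's implicit use of symmetry.
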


\begin{proof}
We only consider the estimate for $\| P_+(f P_- g) \|_{H^s}$.
For $n \in \N$,
H\"older's inequality yields that
\begin{align*}
|\Ft[ P_+(f P_- g)] (n)|
&= \Big| \sum_{k \in \N} \ft f(n+k) \ft g(-k) \Big|
\\
&\le
\Big( \sum_{k \in \N} \jb{k}^{-2r} |\ft f(n+k)|^2 \Big)^{\frac 12}
\| g \|_{H^r}
.
\end{align*}
Note that $n \le n+k$ holds for $n,k \in \N$.
It follows from $s \ge 0$ and $r>\frac 12$ that
\begin{align*}
\| P_+(f P_- g) \|_{H^s}
&\le
\Big( \sum_{n \in \N} \sum_{k \in \N} \jb{k}^{-2r} \jb{n+k}^{2s} |\ft f(n+k)|^2 \Big)^{\frac 12}
\| g \|_{H^r}
\\
&\les
\| f \|_{H^s} \| g \|_{H^r}.
\end{align*}
This concludes the proof.
\end{proof}

To obtain the well-posedness in $H^s(\T)$ for $s>\frac 52$,
we use the following commutator estimate.
See also Remark \ref{rem:com1} below.

\begin{lemma}
\label{lem:Com1a}
Let $s \ge 0$ and $\eps>0$.
Then,
we have
\begin{align*}
\| \jb{\dx}^s (f \dx g) - f \jb{\dx}^s \dx g \|_{L^2}
&\les
\| f \|_{H^{\frac 32+\eps}} \| g \|_{H^s}
+ \| f \|_{H^{s+1}} \| g \|_{H^{\frac 12+\eps}}
\end{align*}
for any $f \in H^{\max (s+1,\frac 32+\eps)}(\T)$ and $g \in H^{s+1}(\T)$.
\end{lemma}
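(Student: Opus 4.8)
The plan is to pass to the Fourier side, where the commutator structure manifests as a gain of one derivative that is then distributed between the two factors. Writing the frequency of $f$ as $m = n-k$, a direct expansion of the two products gives, for every $n \in \Z$,
\[
\Ft\big[\jb{\dx}^s(f\dx g) - f \jb{\dx}^s \dx g\big](n)
= \sum_{k \in \Z} \ft f(n-k)\,(\jb{n}^s - \jb{k}^s)\,ik\,\ft g(k).
\]
I would then split the $k$-sum into the regime in which the frequency of $f$ is subordinate, $|n-k| \le \tfrac 12 |k|$, and its complement $|n-k| > \tfrac 12 |k|$; these two regimes will produce, respectively, the two terms on the right-hand side of the asserted estimate. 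Throughout we may assume $0<\eps\le\tfrac 12$, since for larger $\eps$ the right-hand side only increases.

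In the first regime $n$ and $k$ share a sign and $\jb{n} \sim \jb{k}$. As $\xi \mapsto \jb{\xi}^s$ has derivative of size $\les \jb{\xi}^{s-1}$, the mean value theorem (the intermediate point $\xi_*$ satisfying $\jb{\xi_*} \sim \jb{k}$) yields $|\jb{n}^s - \jb{k}^s| \les |n-k|\,\jb{k}^{s-1}$, whence $|\jb{n}^s - \jb{k}^s|\,|k| \les |n-k|\,\jb{k}^s$. Setting $a_m := \jb{m}^{\frac 32+\eps}|\ft f(m)|$ and $b_k := \jb{k}^{s}|\ft g(k)|$, and using $|m| = |n-k| \le \jb{m}$, the contribution of this regime is dominated termwise by $\jb{m}^{1-(\frac 32+\eps)}\,a_m\,b_{n-m} = \jb{m}^{-\frac 12-\eps}\,a_m\,b_{n-m}$. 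In the second regime one has $\jb{n},\jb{k} \les \jb{m}$, so that the crude bound $|\jb{n}^s - \jb{k}^s| \le \jb{n}^s + \jb{k}^s \les \jb{m}^s$ applies; setting $c_m := \jb{m}^{s+1}|\ft f(m)|$ and $d_k := \jb{k}^{\frac 12+\eps}|\ft g(k)|$ and using $|k| \le \jb{k} \les \jb{m}$, the contribution is dominated termwise by $\jb{m}^{s-(s+1)}\,\jb{m}^{\frac 12 - \eps}\,c_m\,d_{n-m} = \jb{m}^{-\frac 12 - \eps}\,c_m\,d_{n-m}$.

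Each of these bounds, viewed as a function of $n$, is a discrete convolution of the sequence $\jb{m}^{-\frac 12-\eps}$ times $f$'s weighted coefficients against $g$'s weighted coefficients. Since $\eps>0$, Cauchy--Schwarz gives $\big\| \jb{m}^{-\frac 12-\eps} a_m \big\|_{\ell^1} \les \big( \sum_m \jb{m}^{-1-2\eps} \big)^{1/2}\|a\|_{\ell^2} \les \|a\|_{\ell^2} = \|f\|_{H^{\frac 32+\eps}}$, and likewise with $a$ replaced by $c$; this is precisely where the summability afforded by the extra $\eps$ is used. Young's inequality $\ell^1 \ast \ell^2 \to \ell^2$ in the variable $n$ then bounds the first regime by $\|f\|_{H^{\frac 32+\eps}}\|g\|_{H^s}$ and the second by $\|f\|_{H^{s+1}}\|g\|_{H^{\frac 12+\eps}}$, and summing the two completes the proof. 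The only genuinely delicate point is the bookkeeping of the frequency split, namely securing the comparabilities $\jb{n}\sim\jb{k}$ in the first regime and $\jb{n},\jb{k}\les\jb{m}$ in the second; once these are in place, the half-integer gains $\tfrac 32 = 1 + \tfrac 12$ and $\tfrac 12$ are seen to originate respectively from the derivative supplied by the commutator together with the $\ell^1$-summation loss $\tfrac 12+\eps$, and from the bare summation loss alone.
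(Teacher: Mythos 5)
Your proof is correct, and it organizes the estimate differently from the paper. The paper splits the Fourier sum into three regions according to the size of $g$'s frequency $k$ relative to the output frequency $n$ (namely $|k|\ge 2|n|$, $|k|<\frac{|n|}2$, and $\frac{|n|}2\le |k|<2|n|$): the first two are handled by Cauchy--Schwarz in $k$, producing pointwise decay $\jb{n}^{-\frac 12-\eps}$ (resp.\ $\jb{n}^{-\min(\frac 12+\eps,1)}$) that is then summed in $\ell^2_n$, while the mean value theorem cancellation together with a Minkowski/Young argument is reserved for the third, comparable-frequency region. You instead split into only two regions according to the size of $f$'s frequency $n-k$ relative to $k$: you apply the mean value theorem exactly where the paper does (your Regime 1 essentially sits inside the paper's region $\III$), use the crude bound $|\jb{n}^s-\jb{k}^s|\les \jb{n-k}^s$ everywhere else, and reduce both regimes uniformly to a single device, the $\ell^1\ast\ell^2\to\ell^2$ convolution inequality. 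Two consequences of the different bookkeeping are worth noting. First, the frequencies in the paper's region $\I$ (where $g$'s frequency dominates) are charged by the paper to $\|f\|_{H^{\frac 32+\eps}}\|g\|_{H^s}$, whereas your scheme charges them to $\|f\|_{H^{s+1}}\|g\|_{H^{\frac 12+\eps}}$; both allocations are admissible because the asserted bound is a sum of the two terms. Second, your crude regime needs the exponent $\frac 12-\eps$ to be nonnegative in order to transfer $\jb{k}^{\frac 12-\eps}\les\jb{n-k}^{\frac 12-\eps}$, so your initial reduction to $0<\eps\le\frac 12$ is not cosmetic but genuinely needed (and it is legitimate, since the right-hand side is monotone in $\eps$); the paper instead absorbs large $\eps$ through the $\min(\frac 12+\eps,1)$ exponent in its region $\II$. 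Your version is leaner (two cases, one summation tool); the paper's yields slightly more information, namely the explicit pointwise-in-$n$ decay of the off-diagonal contributions.
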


\begin{proof}
A direct calculation shows that
\begin{equation}
\begin{aligned}
&\Ft \big[ \jb{\dx}^s (f \dx g) - f \jb{\dx}^s \dx g \big] (n)
\\
&= \sum_{k \in \Z} \big( \jb{n}^s  - \jb{k}^s ) ik \ft f(n-k)\ft g(k) \\
&=
\bigg( \sum_{\substack{k \in \Z \\ |k| \ge 2|n|}}
+ \sum_{\substack{k \in \Z \\ |k|< \frac{|n|}2}}
+ \sum_{\substack{k \in \Z \\ \frac{|n|}2 \le |k|< 2|n|}} \bigg)
\big( \jb{n}^s  - \jb{k}^s ) ik \ft f(n-k)\ft g(k)
\\
&=: \I (n) + \II (n) + \III (n)
\end{aligned}
\label{dec1}
\end{equation}
for $n \in \Z$.
We treat these three cases separately.

When $|k| \ge 2|n|$,
it holds that
%\[
%|n-k| \ge |k| - |n| \ge \frac{|k|}2 \ge |n|.
%\]
$\jb{n-k} \sim \jb{k} \ges \jb{n}$.
With $s \ge 0$ and $\eps>0$,
we have
\begin{align*}
|\I (n)|
&\les
\sum_{\substack{k \in \Z \\ |k| \ge 2|n|}}
\jb{n-k} |\ft f(n-k)| \jb{k}^s |\ft g(k)|\\
&\les
\bigg( \sum_{\substack{k \in \Z \\ |k| \ge 2|n|}}
\jb{n-k}^2 |\ft f(n-k)|^2 \bigg)^{\frac 12}
\| g \|_{H^s}
\\
&\les
\jb{n}^{-\frac 12-\eps}
\| f \|_{H^{\frac 32+\eps}}
\| g \|_{H^s}.
\end{align*}
This yields that
\begin{equation}
\bigg( \sum_{n \in \Z} |\I(n)|^2 \bigg)^{\frac 12}
\les
\| f \|_{H^{\frac 32+\eps}}
\| g \|_{H^s}.
\label{In1}
\end{equation}

When $|k|< \frac{|n|}2$,
it holds that
$\jb{n} \sim \jb{n-k} \ges \jb{k}$.
With $s \ge 0$ and $\eps>0$,
we have
\begin{align*}
|\II (n)|
&\les
\sum_{\substack{k \in \Z \\ |k| < \frac{|n|}2}}
\jb{n-k}^s |\ft f(n-k)| \jb{k} |\ft g(k)|\\
&\les
\bigg( \sum_{\substack{k \in \Z \\ |k| < \frac{|n|}2}}
\jb{n-k}^{2s+2-2 \min (\frac 12+\eps,1)} |\ft f(n-k)|^2 \bigg)^{\frac 12}
\| g \|_{H^{\min (\frac 12+\eps,1)}}
\\
&\les
\jb{n}^{-\min (\frac 12+\eps,1)}
\| f \|_{H^{s+1}}
\| g \|_{H^{\frac 12+\eps}}.
\end{align*}
A similar calculation as in \eqref{In1} shows that
\begin{equation}
\bigg( \sum_{n \in \Z} |\II(n)|^2 \bigg)^{\frac 12}
\les
\| f \|_{H^{s+1}} \| g \|_{H^{\frac 12+\eps}}.
\label{IIn1}
\end{equation}

When
$\frac{|n|}2 \le |k| < 2|n|$,
it holds that
\begin{align*}
\big| \jb{n}^s - \jb{k}^s \big|
\les \jb{n-k} \jb{k}^{s-1}.
\end{align*}
Indeed,
if $|n-k| \ge \frac{|k|}2$,
the inequality follow from $|n-k| \sim |n| \sim |k|$.
If $|n-k| < \frac{|k|}2$,
the fundamental theorem of calculus yields that
\begin{align*}
\big| \jb{n}^s - \jb{k}^s \big|
&= \bigg| (n-k) \int_0^1 s \jb{k + \theta (n-k)}^{s-2} (k + \theta (n-k)) d\theta \bigg|
\\
&\les \jb{n-k} \jb{k}^{s-1}.
\end{align*}
Hence,
we have
\begin{align*}
|\III (n)|
&\les
\sum_{\substack{k \in \Z \\ \frac{|n|}2 \le |k| < 2|n|}}
\jb{n-k} |\ft f(n-k)| \jb{k}^s |\ft g(k)|\\
&=
\sum_{\substack{\l \in \Z \\ \frac{|n|}2 \le |n-\l| < 2|n|}}
\jb{\l} |\ft f(\l)| \jb{n-\l}^s |\ft g(n-\l)|.
\end{align*}
By Minkowski's integral inequality (with counting measures),
we have
\begin{equation}
\bigg( \sum_{n \in \Z} |\III(n)|^2 \bigg)^{\frac 12}
\les
\sum_{\l \in \Z}
\jb{\l} |\ft f(\l)|
\cdot
\| g \|_{H^s}
\les
\| f \|_{H^{\frac 32+\eps}}
\| g \|_{H^s}
\label{IIIn1}
\end{equation}
for $\eps>0$.
The desired estimate follows from
\eqref{dec1}--\eqref{IIIn1}.
\end{proof}

Let $s \ge 0$, $\eps>0$, and $f \in H^{\max (s+1,\frac 32+\eps)}(\T)$.
We define the operator $[ \jb{\dx}^s, f] \dx : H^{s+1}(\T) \to L^2(\T)$ by
\[
[ \jb{\dx}^s, f] \dx g
:= \jb{\dx}^s (f \dx g) - f \jb{\dx}^s \dx g
\]
for $g \in H^{s+1} (\T)$.
It follows from Lemma \ref{lem:Com1a} that
\[
\| [ \jb{\dx}^s, f] \dx g \|_{L^2}
\les
\| f \|_{H^{\frac 32+\eps}} \| g \|_{H^s}
+ \| f \|_{H^{s+1}} \| g \|_{H^{\frac 12+\eps}}
\]
for $g \in H^{s+1} (\T)$.
Hence,
$[ \jb{\dx}^s, f] \dx$ is continuously (uniquely) extended to a bounded operator
from $H^{\max(s, \frac 12+\eps)}(\T)$ to $L^2(\T)$.
In other words, we obtain the following.

\begin{proposition}
\label{prop:Com1}
Let $s \ge 0$ and $\eps>0$.
Then,
we have
\begin{align*}
\| [ \jb{\dx}^s, f] \dx g \|_{L^2}
&\les
\| f \|_{H^{\frac 32+\eps}} \| g \|_{H^s}
+ \| f \|_{H^{s+1}} \| g \|_{H^{\frac 12+\eps}}
\end{align*}
for any $f \in H^{\max (s+1,\frac 32+\eps)}(\T)$ and $g \in H^{\max(s, \frac 12+\eps)}(\T)$.
\end{proposition}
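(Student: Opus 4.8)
The plan is to deduce Proposition \ref{prop:Com1} from Lemma \ref{lem:Com1a} by a density-and-continuity argument, taking advantage of the fact that the right-hand side of the estimate in Lemma \ref{lem:Com1a} involves $g$ only through $\| g \|_{H^s}$ and $\| g \|_{H^{\frac12+\eps}}$, hence through $\| g \|_{H^{\max(s,\frac12+\eps)}}$, whereas the a priori hypothesis $g \in H^{s+1}(\T)$ in the lemma serves only to make the two summands $\jb{\dx}^s (f \dx g)$ and $f \jb{\dx}^s \dx g$ separately meaningful. The commutator is smoother than either piece, so it survives on the larger space even when the individual terms do not.

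First I would fix $f \in H^{\max(s+1,\frac32+\eps)}(\T)$ and regard
\[
T_f g := [ \jb{\dx}^s, f] \dx g = \jb{\dx}^s (f \dx g) - f \jb{\dx}^s \dx g
\]
as a linear map $T_f : H^{s+1}(\T) \to L^2(\T)$. By Lemma \ref{lem:Com1a} together with the monotonicity of Sobolev norms,
\[
\| T_f g \|_{L^2}
\les \big( \| f \|_{H^{\frac32+\eps}} + \| f \|_{H^{s+1}} \big) \| g \|_{H^{\max(s,\frac12+\eps)}},
\]
so $T_f$ is bounded when its domain $H^{s+1}(\T)$ is equipped with the weaker norm $\| \cdot \|_{H^{\max(s,\frac12+\eps)}}$, with operator bound controlled by $f$ exactly as in the statement.

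Next I would separate two parameter regimes. If $\max(s,\frac12+\eps) > s+1$, then $\max(s,\frac12+\eps) = \frac12+\eps$ and $H^{\frac12+\eps}(\T) \subset H^{s+1}(\T)$, so every admissible $g$ already lies in the domain of the lemma and the claim is immediate. Otherwise $s+1 \ge \max(s,\frac12+\eps)$ and the inclusion $H^{s+1}(\T) \hookrightarrow H^{\max(s,\frac12+\eps)}(\T)$ is dense (trigonometric polynomials being dense in every $H^\sigma(\T)$). Since $L^2(\T)$ is complete, the bounded operator $T_f$ on the dense subspace $H^{s+1}(\T)$ extends uniquely to a bounded operator $\widetilde T_f : H^{\max(s,\frac12+\eps)}(\T) \to L^2(\T)$ with the same bound; for $g$ in the larger space one approximates $g$ by $g_n \in H^{s+1}(\T)$, defines $[ \jb{\dx}^s, f] \dx g := \widetilde T_f g = \lim_n T_f g_n$ in $L^2(\T)$, and passes to the limit in the estimate applied to each $g_n$.

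The point requiring the most care is the consistency of the extended definition, namely that $\widetilde T_f g$ is independent of the approximating sequence and agrees with $T_f g$ whenever $g \in H^{s+1}(\T)$; both follow from the uniqueness of the continuous extension of a densely defined bounded linear map into a complete space, together with the linearity of $T_f$. I expect no genuine analytic difficulty beyond this, since the delicate Fourier-multiplier bookkeeping has already been carried out in Lemma \ref{lem:Com1a}; the remaining work is the routine functional-analytic extension and the elementary verification of which Sobolev inclusion is dense in the given parameter range.
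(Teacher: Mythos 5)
Your proposal is correct and is essentially the paper's own argument: the paper likewise defines $[\jb{\dx}^s, f]\dx$ on $H^{s+1}(\T)$ via Lemma \ref{lem:Com1a} and then extends it continuously (uniquely) to a bounded operator from $H^{\max(s,\frac12+\eps)}(\T)$ to $L^2(\T)$, since the bound depends on $g$ only through $\| g \|_{H^s}$ and $\| g \|_{H^{\frac12+\eps}}$. Your extra case distinction for $\frac12+\eps > s+1$ and the consistency check of the extension are minor elaborations of the same density-and-continuity reasoning.
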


\begin{remark}
\label{rem:com1}
\rm
Lemma B.3 in \cite{II01} yields that
\[
\| [ \jb{\dx}^s, f] \dx g \|_{L^2}
\les
\| f \|_{H^{\frac 32+\eps}} \| g \|_{H^{s-1}}
+ \| f \|_{H^s} \| g \|_{H^{\frac 12+\eps}}
\]
for $s \ge 1$, $\eps>0$, $f \in H^{\max (s,\frac 32+\eps)}(\T)$, and $g \in H^{\max(s-1, \frac 12+\eps)}(\T)$.
If we naively use this bound instead of Proposition \ref{prop:Com1},
%an additional regularity assumption ``$s \ge 3$'' is needed in Theorem \ref{thm:equiv} instead of ``$s> \frac 52$''.
we need to replace the assumption ``$s> \frac 52$'' in Theorem \ref{thm:equiv} with ``$s \ge 3$''.
See \eqref{est2b} and \eqref{est2bw} below, for example.
\end{remark}

\section{Well-posedness}
\label{Sec:WP}

In this section,
we prove the well-posedness for \eqref{NLS} under the condition \eqref{Fbez}.
Without loss of generality,
we may consider a solution to \eqref{NLS} forward in time.
Indeed,
if $u$ is a solution to \eqref{NLS} with \eqref{Fbez}, then
\[
u'(t,x) :=
\cj{u(-t,x)}
\]
solves
\[
\dt u' + i \dx^2 u' = - \cj{F(\cj{u'}, \cj{\dx u'}, u', \dx u')}.
\]
Since
\[
\frac{\pd}{\pd \be} \cj{F(\cj \al, \cj \be, \al, \be)}
= \cj{\frac{\pd F}{\pd \be} (\cj \al, \cj \be, \al, \be)},
\]
the nonlinear term of $u'$ also satisfies \eqref{Fbez}.

\subsection{Energy estimates}
\label{SUBSEC:energy}

We consider the Cauchy problem for the parabolic regularized \eqref{NLS}:
\begin{equation} 
\left\{
\begin{aligned}
&\dt u^{\eps} + (i-\eps) \dx^2 u^{\eps} = F(u^{\eps}, \dx u^{\eps}, \cj {u^{\eps}}, \cj{\dx u^{\eps}}), \\ 
&u^{\eps}(0,x) = \phi(x)
\end{aligned}
\right.
\label{RNLS}
\end{equation}
for $\eps>0$, $t>0$, and $x \in \T$.
Thanks to the parabolic regularization,
the existence of a solution to \eqref{RNLS} follows from the standard contraction argument.

\begin{proposition}
\label{prop:exRNLS}
Let $\eps>0$ and $s> \frac 52$.
Then, for any $\phi \in H^s(\T)$,
there exist $T(\eps)>0$ and a unique solution $u^\eps \in C([0,T(\eps)]; H^s(\T))$ to \eqref{RNLS}.
\end{proposition}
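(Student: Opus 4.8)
The plan is to recast \eqref{RNLS} in its mild (Duhamel) form and run a Banach fixed point argument in $C([0,T(\eps)];H^s(\T))$, exploiting the smoothing of the regularized propagator. Write $S_\eps(t) := e^{-t(i-\eps)\dx^2}$, whose Fourier multiplier is $e^{(i-\eps)k^2 t}$; since $|e^{(i-\eps)k^2 t}| = e^{-\eps k^2 t}$, the operator $S_\eps(t)$ is a contraction on every $H^\sigma(\T)$ and, crucially, gains derivatives for $t>0$. A solution will be sought as a fixed point of
\[
\Phi(u)(t) := S_\eps(t)\phi + \int_0^t S_\eps(t-t') F(u,\dx u, \cj u, \cj{\dx u})(t')\,dt'.
\]
First I would record the two mapping properties of the semigroup. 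From $\jb{k}^{2s} e^{-2\eps k^2 \tau} = \jb{k}^{2(s-1)}\cdot \jb{k}^2 e^{-2\eps k^2\tau}$ together with $\sup_{k\in\Z}\jb{k}^2 e^{-2\eps k^2 \tau}\les_{\eps} \tau^{-1}$ for $0<\tau\le 1$, one obtains the smoothing estimate $\|S_\eps(\tau) g\|_{H^s}\les_{\eps} \tau^{-1/2}\|g\|_{H^{s-1}}$, the uniform bound $\|S_\eps(\tau)g\|_{H^s}\le\|g\|_{H^s}$, and strong continuity of $\tau\mapsto S_\eps(\tau)g$.

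Next I would estimate the nonlinearity. Since $s>\frac 52$, both $u\in H^s(\T)$ and $\dx u\in H^{s-1}(\T)$ lie in the algebra $H^{s-1}(\T)$ (as $s-1>\frac 32>\frac 12$), so Proposition~\ref{prop:bili2} applied factor by factor to the polynomial $F$ yields
\[
\|F(u,\dx u,\cj u, \cj{\dx u})\|_{H^{s-1}}\les P(\|u\|_{H^s})
\]
for some polynomial $P$ with $P(0)=0$, and, by telescoping, the difference bound
\[
\|F(u,\dx u,\cj u,\cj{\dx u}) - F(v,\dx v,\cj v,\cj{\dx v})\|_{H^{s-1}}\les Q(\|u\|_{H^s},\|v\|_{H^s})\,\|u-v\|_{H^s}.
\]
Combining these on the ball $X_{T,R} := \{u\in C([0,T];H^s(\T)) : \sup_{[0,T]}\|u\|_{H^s}\le R\}$ with $R:=2\|\phi\|_{H^s}$, the smoothing estimate gives
\[
\Big\|\int_0^t S_\eps(t-t')F(t')\,dt'\Big\|_{H^s}\les_{\eps} \int_0^t (t-t')^{-1/2}\,dt'\cdot\sup_{[0,T]}\|F\|_{H^{s-1}}\les_{\eps} T^{1/2}P(R),
\]
and similarly for differences. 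Because the kernel $(t-t')^{-1/2}$ is integrable, choosing $T=T(\eps)$ small (depending also on $\|\phi\|_{H^s}$) makes $\Phi$ a contraction of $X_{T,R}$ into itself. The Banach fixed point theorem then produces a unique fixed point $u^\eps\in C([0,T(\eps)];H^s(\T))$, the desired mild solution, and a standard bootstrap shows it solves \eqref{RNLS} in the strong sense. Uniqueness in the whole class $C([0,T];H^s(\T))$ follows from a singular Gronwall argument: the difference of two solutions with the same data satisfies $\|u-v\|_{H^s}(t)\les_{\eps}\int_0^t (t-t')^{-1/2}\|u-v\|_{H^s}(t')\,dt'$, and iterating on short subintervals of uniform length forces $u\equiv v$.

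The main obstacle, and the only point at which the regularization is essential, is the derivative in the nonlinearity: since $F$ costs one derivative, $F(u,\dx u,\cj u,\cj{\dx u})$ lives only in $H^{s-1}(\T)$, one order below the solution space. Without $\eps>0$ the propagator is an isometry on $H^s(\T)$ and cannot recover this loss. The parabolic gain $\|S_\eps(\tau)\|_{H^{s-1}\to H^s}\les_{\eps} \tau^{-1/2}$, with its integrable singularity, is exactly what closes the contraction; the price is that $T(\eps)$ and the implicit constants degenerate as $\eps\to 0$, which is precisely why the uniform-in-$\eps$ existence needed later must be recovered through the energy estimates rather than through this fixed point scheme.
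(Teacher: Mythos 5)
Your proposal is correct and is precisely the ``standard contraction argument'' enabled by the parabolic regularization that the paper invokes (the paper itself gives no details, citing Lemma 4.1 of Chihara \cite{Chi02}): a fixed point for the Duhamel map in $C([0,T(\eps)];H^s(\T))$, using the smoothing bound $\|S_\eps(\tau)\|_{H^{s-1}\to H^s}\les_\eps \tau^{-1/2}$ to absorb the one-derivative loss in $F$, with the algebra property of $H^{s-1}(\T)$ supplying the nonlinear estimates. The only (immaterial) blemish is the claim $P(0)=0$, which can fail if $F$ has a constant term, but nothing in the argument uses it.
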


See Lemma 4.1 in \cite{Chi02} for example.
Since
%the parabolic regularization and
$F$ is a polynomial,
$u^\eps$ belongs to $C^\infty ((0,T(\eps)) \times \T)$.

In this subsection,
we prove that there exist $T>0$ which is independent of $\eps \in (0,1)$ and a solution $u \in L^{\infty}([0,T];H^s(\T)) \cap C([0,T]; H^{s-1}(\T))$ to \eqref{NLS}.

Let $u^{\eps}$ be the solution to \eqref{RNLS} obtained in Proposition \ref{prop:exRNLS}.
Set
\[
v^{\eps}
=
\dx u^{\eps}.
\]
For simplicity, we write
\begin{align*}
F^{\eps} &= F(u^{\eps},v^{\eps}, \cj {u^{\eps}} ,\cj {v^{\eps}}), &
F_\al^{\eps} &= F_\al (u^{\eps},v^{\eps}, \cj {u^{\eps}} ,\cj {v^{\eps}}), &
& \ \dots.
\end{align*}
Then,
$u^\eps$ satisfies
\begin{equation}
\dt u^{\eps} + (i-\eps) \dx^2 u^\eps
= F^\eps .
\label{ueps}
\end{equation}
We also set
\[
w^{\eps}
=
\dx^2 u^{\eps}.
\]
Then,
$v^{\eps}$ satisfies
\begin{equation}
\begin{aligned}
\dt v^{\eps} + (i-\eps) \dx^2 v^{\eps}
&=
F^{\eps}_{\al} v^{\eps}
+
F^{\eps}_{\be} w^{\eps}
+
F^{\eps}_{\cj \al} \cj v
+
F^{\eps}_{\cj \be} \cj{w^{\eps}}
.
\end{aligned}
\label{veps}
\end{equation}
Moreover,
$w^{\eps}$ satisfies
\begin{equation}
\dt w^{\eps} + (i-\eps) \dx^2 w^{\eps}
=
F^{\eps}_{\be} \dx w^{\eps}
+
F^{\eps}_{\cj \be} \cj{\dx w^{\eps}}
+
R^{\eps},
\label{weps}
\end{equation}
where $R^\eps$ denotes a polynomial in
%$u^{\eps},v^{\eps},w^{\eps}$.
$u^{\eps},v^{\eps},w^{\eps},\cj{u^{\eps}},\cj{v^{\eps}},\cj{w^{\eps}}$.
%Note that $R^\eps$ does not have a constant term 

To eliminate the problematic part in the first term on the right-hand side of \eqref{weps},
we apply a gauge transformation.
Define
\begin{align}
\Ld^{\eps}
&:=
-\frac {i}2 \dx^{-1} F_\be^{\eps},
\label{Ldep}
\\
W^{\eps}
&:=
e^{-\Ld^{\eps}} w^{\eps},
\label{Wga}
\end{align}
where $\dx^{-1}$ is defined in \eqref{defande}.
%Note that $\Ld^\eps (t, \cdot)$ is a $2\pi$-periodic function.
%Moreover, we have $\Ld^\eps (t,\cdot) \in H^{s}(\T)$.

A direct calculation shows that
\begin{align*}
\dt W^{\eps}
&= e^{-\Ld^{\eps}} (- \dt \Ld^{\eps} \cdot w^{\eps} + \dt w^{\eps}),
\\
\dx W^{\eps}
&= e^{-\Ld^{\eps}} (- \dx \Ld^{\eps} \cdot w^{\eps} + \dx w^{\eps}),
\\
\dx^2 W^{\eps}
&=
e^{-\Ld^{\eps}} \big( (\dx \Ld^{\eps})^2 w^{\eps} -2 \dx \Ld^{\eps} \cdot \dx w^{\eps} - \dx^2 \Ld^{\eps} \cdot w^{\eps} + \dx^2 w^{\eps} \big).
\end{align*}
From \eqref{weps} and \eqref{Wga},
we obtain
\begin{equation}
\begin{aligned}
&\dt W^{\eps} + (i-\eps) \dx^2 W^{\eps}
\\
&=
%(P_0 F_\be^{\eps}) ( \dx W^{\eps} + \dx \Ld^{\eps} \cdot W^{\eps})
%-
%i \eps (P_{\neq 0} F_\be^{\eps})( \dx W^{\eps} + \dx \Ld^{\eps} \cdot W^{\eps})
(F_\be^{\eps} - 2(i-\eps) \dx \Ld^{\eps})
( \dx W^{\eps} + \dx \Ld^{\eps} \cdot W^{\eps})
\\
&\quad
+ F_{\cj \be}^\eps e^{-\Ld^{\eps} + \cj{\Ld^{\eps}}} \big( \cj{\dx W^{\eps}} + \cj{\dx \Ld^{\eps} \cdot W^{\eps}} \big)
+ e^{-\Ld^{\eps}} R^\eps
\\
&\quad
+
\big( - \dt \Ld^{\eps} + (i - \eps) (\dx \Ld^{\eps})^2 - (i - \eps) \dx^2 \Ld^{\eps} \big) W^{\eps}.
\end{aligned}
\label{NLS2eps}
\end{equation}

By \eqref{Ldep}, we have
\begin{equation}
\dx \Ld^{\eps} 
=-\frac i2 \dx \dx^{-1} F_\be^{\eps}
=-\frac i2 P_{\neq 0}F_\be^{\eps} .
\label{dxLdep}
\end{equation}
Moreover,
\begin{equation}
\dx^2 \Ld^{\eps}
%=
%-\frac i2 \dx P_{\neq 0} F_\be
=
-\frac i2 \dx F_\be^{\eps} 
=
-\frac i2
\big(
F_{\al \be}^{\eps} v^{\eps} + F_{\be \be}^{\eps} w^{\eps} + F_{\cj \al \be}^{\eps} \cj{v^{\eps}} + F_{\be \cj \be}^{\eps} \cj{w^{\eps}}
\big)
\label{dxxLdep}
\end{equation}
is a polynomial in $u^{\eps},v^{\eps},w^{\eps}, \cj{u^{\eps}}, \cj{v^{\eps}}, \cj{w^{\eps}}$.

It follows from \eqref{ueps} and \eqref{veps} that
\begin{align*}
\dt u^{\eps}
&= R_1^\eps, \\
\dt v^{\eps}
&= (-i + \eps) \dx w^{\eps}
+ R_2^\eps,
\label{uvep}
\end{align*}
where $R_1^{\eps}$ and $R_2^{\eps}$ are polynomials in $u^{\eps},v^{\eps},w^{\eps}, \cj u^{\eps}, \cj v^{\eps}, \cj w^{\eps}$.
A direct calculation shows that
\begin{align*}
\dt F_\be^{\eps}
&=F_{\al \be}^{\eps} \dt u^{\eps} + F_{\be \be}^{\eps} \dt v^{\eps} + F_{\cj \al \be}^{\eps} \cj{\dt u^{\eps}} + F_{\be \cj \be}^{\eps} \cj{\dt v^{\eps}}
\\
&=
(-i+ \eps) F_{\be \be}^{\eps} \dx w^{\eps}
+ (i+\eps) F_{\be \cj \be}^{\eps} \cj{\dx w^{\eps}}
\\
&\quad
+ F_{\al \be}^{\eps} R_1^\eps
+ F_{\cj \al \be}^{\eps} \cj{R_1^\eps}
+ F_{\be \be}^{\eps} R_2^\eps
+ F_{\be \cj \be}^{\eps} \cj{R_2^\eps}.
\end{align*}
Since
\begin{align*}
F_{\be \be}^{\eps} \dx w^{\eps}
&= \dx (F_{\be \be}^{\eps} w^{\eps})
- (F_{\al \be \be}^{\eps} v^{\eps} + F_{\be \be \be}^{\eps} w^{\eps} + F_{\cj \al \be \be}^{\eps} \cj{v^{\eps}} + F_{\be \be \cj \be}^{\eps} \cj{w^{\eps}} )w^{\eps},
\\
F_{\be \cj \be}^{\eps} \cj{\dx w^{\eps}}
&= \dx (F_{\be \cj \be}^{\eps} \cj{w^{\eps}})
- (F_{\al \be \cj \be}^{\eps} v^{\eps} + F_{\be \be \cj \be}^{\eps} w ^{\eps}+ F_{\cj \al \be \cj \be}^{\eps} \cj{v^{\eps}} + F_{\be \cj \be \cj \be}^{\eps} \cj{w^{\eps}} ) \cj{w^{\eps}},
\end{align*}
we can write
\begin{equation}
\dt F_\be^{\eps}
= 
(-i+\eps) \dx (F_{\be \be}^{\eps} w^{\eps})
+ (i+\eps) \dx (F_{\be \cj \be}^{\eps} \cj{w^{\eps}})
+ R_3^{\eps},
\label{Tabbep}
\end{equation}
where $R_3^{\eps}$ is a polynomial in $u^{\eps},v^{\eps},w^{\eps}, \cj u^{\eps}, \cj v^{\eps}, \cj w^{\eps}$.
From \eqref{Ldep} and \eqref{Tabbep},
we obtain that
\begin{equation}
\begin{aligned}
2i \dt \Ld^\eps
&=
(-i+\eps) P_{\neq 0} (F_{\be \be}^\eps w^\eps)
+ (i+\eps) P_{\neq 0} (F_{\be \cj \be}^\eps \cj{w^\eps})
+ \dx^{-1} R_3^\eps.
\end{aligned}
\label{intGep}
\end{equation}
It follows from \eqref{dxLdep}, \eqref{dxxLdep}, and \eqref{intGep} that 
\[
\dx \Ld^{\eps}, \qquad \dx^2 \Ld^{\eps}, \qquad \dt \Ld^{\eps}
\]
are polynomials in $u^{\eps},v^{\eps},w^{\eps}, \cj u^{\eps}, \cj v^{\eps}, \cj w^{\eps}$.

Therefore, we can rewrite \eqref{NLS2eps} as follows:
\begin{equation}
\begin{aligned}
\dt W^{\eps} + (i - \eps)\dx^2 W^{\eps}
&=
(P_0 F_\be^{\eps}) \dx W^{\eps} 
-
i \eps (P_{\neq 0} F_\be^{\eps}) \dx W^{\eps} 
\\
&\quad
+ 
F_{\cj \be}^{\eps} e^{- \Ld^{\eps} + \cj {\Ld ^{\eps}}} \cj {\dx W^{\eps}}
+
\fR^{\eps},
\label{Weps}
\end{aligned}
\end{equation}
where $\fR^{\eps}$ is a polynomial in $u^{\eps},v^{\eps},W^{\eps},\cj u^{\eps},\cj v^{\eps},\cj W^{\eps}, e^{\Ld^{\eps}}, e^{\cj{\Ld^{\eps}}}$.
Note that the coefficients of $\fR^\eps$ are bounded for $\eps \in (0,1)$.

Set
\begin{align}
E_s^\eps(t)
&:=
\sqrt{
\|u^{\eps}(t)\|_{H^{s-2}}^2
+
\|v^{\eps}(t)\|_{H^{s-2}}^2
+
\|W^{\eps}(t)\|_{H^{s-2}}^2}.
\label{Eesp}
\end{align}

\begin{lemma}
\label{lem:Enebd1}
Let $s,r \in \R$ satisfy
\begin{equation}
\frac 52 < s \le r.
\label{condsr0}
\end{equation}
%\[
%%\frac 32< s < r-1.
%s> \frac 32, \quad
%r \ge s.
%\]
%Let $s > \frac 52$.
Then,
there exist constants $C_0 ( \| u^\eps (t) \|_{H^{s-1}})$ and $C_0' ( \| u^\eps (t) \|_{H^{s}})$ such that
\begin{align}
\| F^\eps (t) \|_{H^{r-2}}
+ \big\| e^{\Ld^\eps(t)} \big\|_{H^{r-1}}
&\le C_0(\| u^\eps (t) \|_{H^{s-1}}) (1+ \| u^\eps (t) \|_{H^{r-1}}),
\label{enFLdeps1}
\\
E^\eps_r(t)
&\le C_0' (\| u^\eps(t)\|_{H^s}) (1+ \| u^\eps (t)\|_{H^r})
\label{bdEr00}
\end{align}
for $\eps \in (0,1)$ and $t \in (0,T(\eps))$.
Moreover,
there exists a constant $\wt C_0 ( E_s^\eps (t))$ such that
\begin{align*}
\| u^\eps(t) \|_{H^r}
&\le \wt C_0 (E_s^\eps(t)) (1+E^\eps_r(t))
\end{align*}
for $\eps \in (0,1)$ and $t \in (0,T(\eps))$.
\end{lemma}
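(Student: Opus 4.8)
The plan is to reduce all three estimates to a single \emph{tame product bound} for polynomials and then to keep careful track of where the high Sobolev index is permitted to appear. The key preliminary observation is that, since $\frac 52 < s \le r$, we have $s-2 > \frac 12$ and $r-2>\frac12$, so both $H^{s-2}(\T)$ and $H^{r-2}(\T)$ are algebras. I claim that for any polynomial $Q$,
\[
\| Q(u^\eps,v^\eps,\cj{u^\eps},\cj{v^\eps}) \|_{H^{r-2}}
\le C(\| u^\eps \|_{H^{s-1}}) (1+\| u^\eps \|_{H^{r-1}}).
\]
To prove this, I expand $Q$ into finitely many monomials. For a monomial of degree $d \ge 1$ I apply Proposition \ref{prop:bili2} with the pair of exponents $(r-2,s-2)$ repeatedly, placing the $H^{r-2}$ norm on exactly one factor and the $H^{s-2}$ norm on each of the remaining $d-1$ factors; the degree-$0$ term contributes a fixed constant. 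Since $\| v^\eps \|_{H^\sigma} = \| \dx u^\eps \|_{H^\sigma} \le \| u^\eps \|_{H^{\sigma+1}}$, every $H^{r-2}$ factor is bounded by $\| u^\eps \|_{H^{r-1}}$ and every $H^{s-2}$ factor by $\| u^\eps \|_{H^{s-1}}$, which yields the claim with $C$ a polynomial.

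The first estimate \eqref{enFLdeps1} then splits into two pieces. Applying the claim to $Q = F$ controls $\| F^\eps \|_{H^{r-2}}$ directly. For the gauge factor, I first note that $\dx^{-1}$ gains one derivative, so $\| \Ld^\eps \|_{H^{r-1}} \les \| F_\be^\eps \|_{H^{r-2}}$, and the claim applied to $Q = F_\be$ bounds the right-hand side. To pass to $e^{\Ld^\eps}$ \emph{without losing linearity}, I expand the exponential and estimate $\| (\Ld^\eps)^n \|_{H^{r-1}}$ by Proposition \ref{prop:bili2} with exponents $(r-1,s-1)$; an induction gives $\| (\Ld^\eps)^n \|_{H^{r-1}} \les n\, C^{n-1} \| \Ld^\eps \|_{H^{r-1}} \| \Ld^\eps \|_{H^{s-1}}^{n-1}$, so that summing the series yields
\[
\big\| e^{\Ld^\eps} \big\|_{H^{r-1}}
\le C + \| \Ld^\eps \|_{H^{r-1}} \, e^{C \| \Ld^\eps \|_{H^{s-1}}} .
\]
Since the claim with $r=s$ bounds $\| \Ld^\eps \|_{H^{s-1}}$ by $C(\| u^\eps \|_{H^{s-1}})$, the exponential prefactor is harmless and \eqref{enFLdeps1} follows.

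For \eqref{bdEr00}, the terms $\| u^\eps \|_{H^{r-2}}$ and $\| v^\eps \|_{H^{r-2}} \le \| u^\eps \|_{H^{r-1}}$ are trivially bounded by $\| u^\eps \|_{H^r}$, so only $\| W^\eps \|_{H^{r-2}} = \| e^{-\Ld^\eps} w^\eps \|_{H^{r-2}}$ needs work. I split it via Proposition \ref{prop:bili2} with exponents $(r-2,s-2)$: one term uses $\| w^\eps \|_{H^{s-2}} = \| \dx^2 u^\eps \|_{H^{s-2}} \le \| u^\eps \|_{H^s}$ together with the high-norm bound for $e^{-\Ld^\eps}$ already obtained (it satisfies \eqref{enFLdeps1} with $\Ld^\eps$ replaced by $-\Ld^\eps$), and the other uses $\| w^\eps \|_{H^{r-2}} \le \| u^\eps \|_{H^r}$ together with $\| e^{-\Ld^\eps} \|_{H^{s-2}} \le C(\| u^\eps \|_{H^{s-1}})$. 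Both contributions have the form $C_0'(\| u^\eps \|_{H^s})(1+\| u^\eps \|_{H^r})$. Finally, for the reverse inequality I use $\| u^\eps \|_{H^r} \les \| u^\eps \|_{H^{r-2}} + \| w^\eps \|_{H^{r-2}}$ with $w^\eps = e^{\Ld^\eps} W^\eps$. The decisive step is the elementary observation $\| u^\eps \|_{H^{\sigma-1}} \les \| u^\eps \|_{H^{\sigma-2}} + \| v^\eps \|_{H^{\sigma-2}} \le E_\sigma^\eps$ for $\sigma \in \{s,r\}$, which lets me convert every constant $C(\| u^\eps \|_{H^{s-1}})$ into a function of $E_s^\eps$ and every $\| u^\eps \|_{H^{r-1}}$ into $E_r^\eps$; splitting $\| e^{\Ld^\eps} W^\eps \|_{H^{r-2}}$ by Proposition \ref{prop:bili2} and using $\| W^\eps \|_{H^{s-2}} \le E_s^\eps$, $\| W^\eps \|_{H^{r-2}} \le E_r^\eps$ then gives $\| u^\eps \|_{H^r} \le \wt C_0(E_s^\eps)(1+E_r^\eps)$.

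I expect the main obstacle to be precisely the \emph{linearity in the high norm}: a naive application of the algebra property of $H^{r-2}(\T)$ would produce powers of $\| u^\eps \|_{H^{r-1}}$ (and an exponential of it through $e^{\Ld^\eps}$), which would be useless for the later persistence-of-regularity argument. Ensuring that the high index falls on only one factor at a time, while all exponential growth is confined to the low norm $\| \Ld^\eps \|_{H^{s-1}} \le C(E_s^\eps)$, is the heart of the proof; the rest is bookkeeping with the Sobolev indices $s-2, s-1, r-2, r-1$, all of which lie in the admissible ranges of Proposition \ref{prop:bili2}.
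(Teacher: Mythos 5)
Your proposal is correct and follows essentially the same route as the paper's proof: the tame bilinear estimate (Proposition \ref{prop:bili2}) applied monomial by monomial with the high index $r$ on exactly one factor, the power-series expansion of $e^{\Ld^\eps}$ with the exponential growth confined to $\| \Ld^\eps \|_{H^{s-1}}$, and the splitting of $\| e^{\mp\Ld^\eps} \cdot \|_{H^{r-2}}$ in both directions to pass between $\| u^\eps \|_{H^r}$ and $E_r^\eps$. The only differences are organizational (your general polynomial ``claim'' and the explicit factor $n$ in the induction), not mathematical.
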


\begin{proof}
For simplicity, we suppress the time dependence in this proof.
Proposition \ref{prop:bili2} implies that
\begin{align*}
\| F^\eps \|_{H^{r-2}}
&\le
C_0 ( \| u^\eps \|_{H^{s-1}})
\big( 1+ \| u^\eps \|_{H^{r-2}} + \| \dx u^\eps \|_{H^{r-2}} \big)
\\
&\le
C_0 ( \| u^\eps \|_{H^{s-1}})
( 1+ \| u^\eps \|_{H^{r-1}} ).
\end{align*}
Similarly,
with \eqref{Ldep},
we have that
\begin{align*}
\| \Ld^\eps \|_{H^{r-1}}
&\les
\| F^\eps_\be \|_{L^2} +  \| F^\eps_\be \|_{H^{r-2}}
\le
C_0 (\| u^\eps \|_{H^{s-1}})
(1+\| u^\eps \|_{H^{r-1}}),
\\
\big\| e^{\Ld^\eps} \big\|_{H^{r-1}}
&\le
\sum_{\l=0}^\infty \frac 1{\l!} \| (\Ld^\eps)^\l \|_{H^{r-1}}
\le
1+
\sum_{\l=1}^\infty \frac 1{\l!} C^\l  \| \Ld^\eps \|_{H^{s-1}}^{\l-1} \| \Ld^\eps \|_{H^{r-1}}
\\
&\le
C_0 (\| u^\eps \|_{H^{s-1}})
(1+\| u^\eps \|_{H^{r-1}}).
\end{align*}
Hence,
\eqref{enFLdeps1} holds.

Proposition \ref{prop:bili2}
with \eqref{Wga} and \eqref{enFLdeps1}
implies that
\begin{align*}
\| W^\eps \|_{H^{r-2}}
&=
\big\| e^{-\Ld^\eps} \dx^2 u^\eps \big\|_{H^{r-2}}
\\
&\le
\big\| e^{-\Ld^\eps} \big\|_{H^{r-2}} \| \dx^2u^\eps \|_{H^{s-2}}
+ \big\| e^{-\Ld^\eps} \big\|_{H^{s-2}} \| \dx^2 u^\eps \|_{H^{r-2}}
\\
&\le
C_0' (\| u^\eps \|_{H^{s}})
(1+\| u^\eps \|_{H^r}).
\end{align*}With \eqref{Eesp},
we have \eqref{bdEr00}.

Similarly,
it holds that
\begin{align*}
\| \dx^2 u^\eps \|_{H^{r-2}}
&=
\big\| e^{\Ld^\eps} W^\eps \big\|_{H^{r-2}}
\\
&
\les
\big\| e^{\Ld^\eps} \big\|_{H^{r-2}} \| W^\eps \|_{H^{s-2}}
+ \big\| e^{\Ld^\eps} \big\|_{H^{s-2}} \| W^\eps \|_{H^{r-2}}
\\
&\le
C_0 (\| u^\eps \|_{H^{s-1}}) (1+ \| u^\eps \|_{H^{r-1}}) \| W^\eps \|_{H^{s-2}}
\\
&\quad
+
C_0 (\| u^\eps \|_{H^{s-1}}) \| W^\eps \|_{H^{r-2}}
\\
&\le
C(E_s^\eps) (1+ E^\eps_r).
\end{align*}
This concludes the proof.
\end{proof}

We prove the following energy estimate.

\begin{proposition}
\label{prop:dtEne}
Let $s,r \in \R$ satisfy \eqref{condsr0}.
Assume that $F$ satisfies \eqref{Fbez}.
%For $E_s^\eps$ defined above.
Then,
there exists 
a constant $C_1(E_s^\eps(t))>0$ such that
\[
\dt E^\eps_r(t)
\le
C_1(E_s^\eps(t))
(1+ E^\eps_r(t))
%\label{dtEne}
\]
for $\eps \in (0,1)$ and $t \in (0,T (\eps))$.
\end{proposition}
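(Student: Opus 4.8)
The plan is to run the $L^2$-type energy method on the coupled system \eqref{ueps}, \eqref{veps}, \eqref{Weps} for the triple $(u^\eps,v^\eps,W^\eps)$, measuring each component in $H^{r-2}$. Since $u^\eps\in C^\infty((0,T(\eps))\times\T)$, for $t\in(0,T(\eps))$ I may differentiate
\begin{align*}
\dt (E_r^\eps)^2
&= 2\Re\big(\jb{\dx}^{r-2}\dt u^\eps,\jb{\dx}^{r-2}u^\eps\big)_{L^2}
+ 2\Re\big(\jb{\dx}^{r-2}\dt v^\eps,\jb{\dx}^{r-2}v^\eps\big)_{L^2}
\\
&\quad
+ 2\Re\big(\jb{\dx}^{r-2}\dt W^\eps,\jb{\dx}^{r-2}W^\eps\big)_{L^2}
\end{align*}
and substitute the three equations. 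The parabolic part $(i-\eps)\dx^2$ contributes nothing from $i\dx^2$ (its real part vanishes) and the non-positive dissipation $-2\eps\|\dx\,\cdot\,\|_{H^{r-2}}^2$ from $-\eps\dx^2$. I will keep only the $W^\eps$-dissipation, for later absorption, and discard the other two.

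\textbf{The mild blocks.} For the $u^\eps$- and $v^\eps$-equations, every term on the right is a product of functions controlled at the top energy level $H^{r-2}$. In particular, the apparently dangerous factor $w^\eps=\dx^2u^\eps$ in \eqref{veps} must be read through the gauge as $w^\eps=e^{\Ld^\eps}W^\eps$, so that $\|w^\eps\|_{H^{r-2}}$ is controlled by $\|W^\eps\|_{H^{r-2}}$ via Lemma \ref{lem:Enebd1}, rather than treated as a derivative of $v^\eps$. Thus Cauchy--Schwarz together with Proposition \ref{prop:bili2} and Lemma \ref{lem:Enebd1} bounds these two blocks by $C(E_s^\eps)(1+E_r^\eps)E_r^\eps$; the same estimate handles the polynomial remainder $\fR^\eps$ in \eqref{Weps}. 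The genuine loss of one derivative occurs only in the three first-order-in-$W^\eps$ terms of \eqref{Weps}, and these form the heart of the argument.

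\textbf{The three dangerous terms.} The decisive term is $(P_0F_\be^\eps)\dx W^\eps$. Here I apply the hypothesis \eqref{Fbez} with $\psi=u^\eps(t,\cdot)$, which gives $\int_\T\Im F_\be^\eps\,dx=0$, so the spatial constant $P_0F_\be^\eps=\int_\T F_\be^\eps\,dx$ is \emph{real}; pulling it out of $\jb{\dx}^{r-2}$ and writing $g:=\jb{\dx}^{r-2}W^\eps$, the contribution is $2(P_0F_\be^\eps)\Re(\dx g,g)_{L^2}=(P_0F_\be^\eps)\int_\T\dx|g|^2\,dx=0$ by periodicity. This is exactly where \eqref{Fbez} enters. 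For $-i\eps(P_{\neq 0}F_\be^\eps)\dx W^\eps$ I commute $\jb{\dx}^{r-2}$ past the coefficient via Proposition \ref{prop:Com1} (the commutator contributing at most $\eps\,C(E_s^\eps)(1+E_r^\eps)E_r^\eps$) and bound the main piece $2\eps\big|\Im\int_\T(P_{\neq 0}F_\be^\eps)\,\dx g\,\cj g\,dx\big|$ by $\eps\|\dx g\|_{L^2}^2+\eps\|P_{\neq 0}F_\be^\eps\|_{L^\infty}^2\|g\|_{L^2}^2$; since $\|\dx g\|_{L^2}=\|\dx W^\eps\|_{H^{r-2}}$, the first summand is absorbed by the retained dissipation $-2\eps\|\dx W^\eps\|_{H^{r-2}}^2$, and the second is $\le C(E_s^\eps)(E_r^\eps)^2$ because $\eps<1$. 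For the conjugate term $F_{\cj\be}^\eps e^{-\Ld^\eps+\cj{\Ld^\eps}}\cj{\dx W^\eps}$, the commutator is controlled as before, and the main piece equals $-\Re\int_\T\dx\big(F_{\cj\be}^\eps e^{-\Ld^\eps+\cj{\Ld^\eps}}\big)\,\cj g^2\,dx$ after integration by parts, with no surviving derivative on $g$; it is bounded by $C(E_s^\eps)(E_r^\eps)^2$ since $\dx\Ld^\eps$ and $\dx F_{\cj\be}^\eps$ are controlled in $L^\infty$ by $E_s^\eps$ (through $\|w^\eps\|_{H^{s-2}}$ and \eqref{dxLdep}).

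\textbf{Conclusion and the main obstacle.} Collecting all contributions yields $\dt(E_r^\eps)^2\le C(E_s^\eps)(1+E_r^\eps)E_r^\eps$, and dividing by $2E_r^\eps$ (or, to avoid the zero set of $E_r^\eps$, differentiating $\sqrt{(E_r^\eps)^2+\delta}$ and letting $\delta\downarrow0$) gives the asserted inequality $\dt E_r^\eps\le C_1(E_s^\eps)(1+E_r^\eps)$. I expect the main obstacle to be the top-order term $(P_0F_\be^\eps)\dx W^\eps$: it is the only place with a net loss of one derivative that cannot be recovered by either the commutator estimate or an integration by parts, and it is precisely the reality of $P_0F_\be^\eps$ --- equivalently the Mizohata-type condition \eqref{Fbez} --- that forces its cancellation. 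A secondary point requiring care is the uniform-in-$\eps$ bookkeeping of the $\eps$-dependent term, where the parabolic dissipation must be kept rather than discarded, so that the half-derivative excess is absorbed without deteriorating the constant as $\eps\to0$.
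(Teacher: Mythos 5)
Your proposal is correct and follows essentially the same route as the paper's proof: the energy method on the system $(u^\eps,v^\eps,W^\eps)$ in $H^{r-2}$, with the reality of $P_0F_\be^\eps$ (from \eqref{Fbez}) killing the top-order term by integration by parts, Proposition \ref{prop:Com1} handling the commutators, the parabolic dissipation absorbing the $\eps$-derivative piece, and Lemma \ref{lem:Enebd1} transferring control between $w^\eps$ and $W^\eps$ through the gauge. The only differences are cosmetic (factor-of-two bookkeeping and your explicit $\delta$-regularization when dividing by $E_r^\eps$, a point the paper leaves implicit).
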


\begin{proof}
For simplicity, we suppress the time dependence in this proof.
From \eqref{ueps} and Lemma \ref{lem:Enebd1},
we have
\begin{equation}
\begin{aligned}
\dt \|u^{\eps} \|_{H^{r-2}}^2
&=
2 \Re (\dt u^\eps, u^\eps)_{H^{r-2}}
\\
&=
- 2 \eps \| \dx u^\eps \|_{H^{r-2}}^2
%+ \Re \int_\T F_0 \cj{u^\eps} dx 
+ 2 \Re (F^\eps , u^\eps)_{H^{r-2}}
\\
&
\le
2 \|F^\eps \|_{H^{r-2}} \| u^\eps \|_{H^{r-2}}
\le
C(E_s^\eps) (1+E^\eps_r)
\| u^\eps \|_{H^{r-2}}.
\end{aligned}
\label{est:u1}
\end{equation}
Moreover, using \eqref{veps} and Lemma \ref{lem:Enebd1}, we have that
\begin{equation}
\begin{aligned}
\dt \|v^{\eps} \|_{r-2}^2
&\le
C(E_s^\eps) (1+E^\eps_r)
\| v^\eps \|_{H^{r-2}}.
\label{est:v1}
\end{aligned}
\end{equation}%

Finally, we consider the estimate for $\dt \| W^\eps \|_{H^{r-2}}^2$.
A direct calculation with \eqref{Weps} yields that
\begin{equation}
\begin{aligned}
\frac 12 \dt \| W^\eps \|_{H^{r-2}}^2
&= \Re ( \dt W^\eps, W^\eps)_{H^{r-2}}
\\
&=
- \eps \| \dx W^\eps \|_{H^{r-2}}^2
\\
&\quad
+\Re \big( (P_0 F_\be^{\eps}) \dx W^{\eps} , W^\eps \big)_{H^{r-2}}
\\
&\quad
+
\Re \big( -i \eps (P_{\neq 0} F_\be^{\eps}) \dx W^{\eps} , W^\eps)_{H^{r-2}}
\\
&\quad
+
\Re \big( F_{\cj \be}^{\eps} e^{- \Ld^{\eps} + \cj {\Ld^{\eps}}} \cj {\dx W^{\eps}}, W^\eps \big)_{H^{r-2}}
\\
&\quad
+
\Re (\fR^{\eps} , W^\eps)_{H^{r-2}}
\\
&=:
- \eps \| \dx W^\eps \|_{H^{r-2}}^2
+ \sum_{j=1}^4 \I_j.
\end{aligned}
\label{enWe}
\end{equation}
%
%A direct calculation yields that
%\begin{equation}
%\Re(-(i - \eps) \dx^2 W^{\eps}(t), W^{\eps}(t))_{s-1}
%=
%- \eps\|\dx W^{\eps}\|_{s-1}
%\le
%0.
%\label{est1}
%\end{equation}
It follows from \eqref{Fbez} and \eqref{defP0} that
\[
P_0 F_\be^{\eps} = \Re P_0 F_\be^{\eps}.
\]
From integration by parts, we have
\begin{equation}
\begin{aligned}
\I_1
&=
\Re(P_0 F_\be^{\eps}) 
\Re \int_{\T} (\dx \jb{\dx}^{r-2} W^{\eps}) \cj {\jb{\dx}^{r-2} W^{\eps}} dx
\\
&=
\frac12 \Re(P_0 F_\be^{\eps})\int_{\T} \dx (| \jb{\dx}^{r-2} W^{\eps} |^2) dx
=
0.
\end{aligned}
\label{est1a}
\end{equation}
 
We decompose $\I_2$ as follows:
\begin{equation}
\begin{aligned}
\I_2
%\Re((\Re F_{\be}^{\eps}) \dx W^{\eps}(t), W^{\eps}(t))_{s-2}
%\\
&=
\eps
\Im  \big( \big[ \jb{\dx}^{r-2},P_{\neq 0} F^{\eps}_\be \big] \dx W^{\eps}, \jb{\dx}^{r-2} W^{\eps} \big)_{L^2}
\\
&\quad
+
\eps
\Im \big( (P_{\neq 0} F^{\eps}_\be) \jb{\dx}^{r-2} \dx W^{\eps}, \jb{\dx}^{r-2} W^{\eps} \big)_{L^2}
\\
&=: \I_{2,1} + \I_{2,2} .
\label{est2a}
\end{aligned}
\end{equation}
It follows from
Proposition \ref{prop:Com1}, \eqref{condsr0}, and Lemma \ref{lem:Enebd1}
that
\begin{equation}
\begin{aligned}
|\I_{2,1}|
&\le
\big\| \big[ \jb{\dx}^{r-2},P_{\neq 0} F^{\eps}_\be \big] \dx W^{\eps} \big\|_{L^2}
\|W^{\eps}\|_{H^{r-2}}
\\
&\les
\big(
\|P_{\neq 0} F^{\eps}_\be\|_{H^{s-1}} \|W^{\eps}\|_{H^{r-2}}
\\
&\qquad
+ \|P_{\neq 0} F^{\eps}_\be\|_{H^{r-1}} \|W^{\eps}\|_{H^{s-2}}
\big)
\|W^{\eps}\|_{H^{r-2}}
\\
&\le
C(E_s^\eps) (1+E^\eps_r) \|W^{\eps}\|_{H^{r-2}}.
\end{aligned}
\label{est2b}
\end{equation}
A direct calculation yields that
\begin{equation}
\begin{aligned}
|\I_{2,2}|
&\le
\eps
\| P_{\neq 0} F^{\eps}_\be \|_{L^\infty}
\| \jb{\dx}^{r-2} \dx W^{\eps} \|_{L^2} \| \jb{\dx}^{r-2} W^{\eps} \|_{L^2}
\\
&\le
\frac \eps2 \| \dx W^{\eps} \|_{H^{r-2}}^2
+
\frac \eps2
\| P_{\neq 0} F^{\eps}_\be \|_{L^\infty}^2
\| W^{\eps} \|_{H^{r-2}}^2
\\
&\le
\frac \eps2 \| \dx W^{\eps} \|_{H^{r-2}}^2
+
C(E_s^\eps) (1+E^\eps_r) \|W^{\eps}\|_{H^{r-2}}.
\end{aligned}
\label{est2ca}
\end{equation}
From \eqref{est2a}--\eqref{est2ca},
we obtain that
\begin{equation}
\begin{aligned}
|\I_2 (t)|
%\Re((\Re F_{\be}^{\eps}) \dx W^{\eps}(t), W^{\eps}(t))_{s-2}
&\le
\frac \eps2 \| \dx W^{\eps} \|_{H^{r-2}}^2
+
C(E_s^\eps) (1+E^\eps_r) \|W^{\eps}\|_{H^{r-2}}.
\label{est2}
\end{aligned}
\end{equation}

We decompose $\I_3$ as follows:
\begin{align*}
\I_3
&= \Re \big( \big[ \jb{\dx}^{r-2} ,F_{\cj \be}^{\eps} e^{- \Ld^{\eps} + \cj {\Ld^{\eps}}} \big] \cj {\dx W^{\eps}}, \jb{\dx}^{r-2} W^\eps \big)_{L^2}
\\
&\quad
+ \Re \big( F_{\cj \be}^{\eps} e^{- \Ld^{\eps} + \cj {\Ld^{\eps}}} \cj {\jb{\dx}^{r-2} \dx W^{\eps}}, \jb{\dx}^{r-2} W^\eps \big)_{L^2}
\\
&=:
\I_{3,1} + \I_{3,2}.
\end{align*}
As in \eqref{est2b}, we have
\[
|\I_{3,1}|
\le
C(E_s^\eps) (1+E^\eps_r) \|W^{\eps}\|_{H^{r-2}}
.
\]
It follows from Lemma \ref{lem:Enebd1} that
\begin{align*}
|\I_{3,2}|
&=
\frac 12 \bigg| \Re \int \dx \big( F_{\cj \be}^{\eps} e^{- \Ld^{\eps} + \cj {\Ld^{\eps}}} \big) \big( \cj{\jb{\dx}^{r-2} W^\eps}\big)^2 dx \bigg|
\\
&\le
\big\| \dx \big( F_{\cj \be}^{\eps} e^{- \Ld^{\eps} + \cj {\Ld^{\eps}}} \big) \big\|_{L^\infty}
\| W^\eps \|_{H^{r-2}}^2
\\
&\les
\big\| F_{\cj \be}^{\eps} e^{- \Ld^{\eps} + \cj {\Ld^{\eps}}} \big\|_{H^{s-1}}
\| W^\eps \|_{H^{r-2}}^2
\\
&\le
C(E_s^\eps) (1+E^\eps_r) \|W^{\eps}\|_{H^{r-2}}
.
\end{align*}
Hence, we obtain that
\begin{align}
|\I_3|
\le
C(E_s^\eps) (1+E^\eps_r) \|W^{\eps}\|_{H^{r-2}}
.\label{est31}
\end{align}

Since $\fR$ is a polynomial in $u^{\eps},v^{\eps},W^{\eps},\cj u^{\eps},\cj v^{\eps},\cj W^{\eps}, e^{\Ld^{\eps}}, e^{\cj{\Ld^{\eps}}}$,
the same calculation yields as in \eqref{est:u1} that
\begin{equation}
|\I_4|
\les \| \fR^{\eps} \|_{H^{r-2}} \| W^\eps \|_{H^{r-2}}
\le
C(E_s^\eps) (1+E^\eps_r) \|W^{\eps}\|_{H^{r-2}}
.
\label{est5}
\end{equation}
Combining \eqref{enWe}, \eqref{est1a}, and \eqref{est2}--\eqref{est5}, we get
\begin{equation}
\dt \|W^{\eps}(t)\|_{r-2}^2
\le
C(E_s^\eps) (1+E^\eps_r) \|W^{\eps}\|_{H^{r-2}}
.
\label{est6a}
\end{equation}
The desired bound follows from \eqref{Eesp}, \eqref{est:u1}, \eqref{est:v1}, and \eqref{est6a}.
\end{proof}

We prove the existence of a solution to \eqref{NLS}.
It follows from \eqref{Eesp} that
\begin{equation}
K
:= E_s^\eps(0)
\label{Eesp0}
\end{equation}
is independent of $\eps>0$.
Define
\begin{equation}
T := \frac 1{C_1(2K)+1} \log \frac{1+2K}{1+K}.
\label{time1}
\end{equation}
%where $C_1(2K)$ denotes the constant $C_1(E_s^\eps(t))$ appeared in Proposition \ref{prop:dtEne} with $E_s^\eps(t) = 2K$.
Note that $T$ is independent of $\eps>0$.

Set
\[
T^{\eps}_{\ast}
:=
\sup \{ T>0 \mid E_s^\eps(t)\le 2K \text{ for } t \in [0,T]\}.
%\label{Teps}
\]
Then, we have
\[
T^{\eps}_{\ast}\ge T.
%\label{uepsbd1}
\]
Indeed,
if $T^{\eps}_{\ast}<T$,
there exists $t_0 \in (0, T^{\eps}_{\ast})$ such that $E^\eps_s(t_0) = 2K$.
It follows from Proposition \ref{prop:dtEne} with $r=s$ and \eqref{time1} that
\[
E^\eps_s(t_0)
\le
( 1 + E^\eps_s(0)) e^{C_1(E_s^\eps(0)) t_0} -1
\le
( 1 + K) e^{C_1(2K) T} -1
<2K.
\]
This contradicts to $E^\eps_s (t_0)=2K$.

\begin{corollary}
\label{cor:enebdur}
%Let $s,r \in \R$ satisfy \eqref{condsr0}.
In addition to the assumption of Proposition \ref{prop:dtEne},
we further assume that $\phi \in H^r(\T)$.
Then,
there exists a constant $C_2 (\| \phi \|_{H^s})>0$ such that
\[
\| u^\eps \|_{L_T^\infty H^r}
\le C_2 (\| \phi \|_{H^s}) (1+\| \phi \|_{H^r}),
\]
for $\eps \in (0,1)$,
where $T$ is defined in \eqref{time1}
and
\[
\| u^\eps \|_{L_T^\infty H^r}
:=
%\| u^\eps \|_{L^\infty ([0,T]; H^r(\T))}
\esssup_{t \in [0,T]} 
\| u^\eps (t) \|_{H^r}
.
\]
\end{corollary}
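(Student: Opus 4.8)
The plan is to upgrade the already-established uniform lower-order bound $E_s^\eps(t)\le 2K$ on $[0,T]$ to an $H^r$ bound by feeding it into the higher-order energy inequality of Proposition \ref{prop:dtEne} and then returning to $\|u^\eps\|_{H^r}$ through Lemma \ref{lem:Enebd1}. The mechanism that makes this work---and the conceptual heart of the persistence of regularity---is that the coefficient $C_1(E_s^\eps(t))$ in Proposition \ref{prop:dtEne} is controlled by the \emph{lower}-regularity energy $E_s^\eps$ alone, which is already bounded; consequently the estimate for $E_r^\eps$ is a \emph{linear} differential inequality whose coefficient is uniformly bounded on $[0,T]$.

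First I would note that, since $\phi\in H^r(\T)$ and $r\ge s>\tfrac52$, running the contraction argument of Proposition \ref{prop:exRNLS} in $H^r$ (together with the parabolic smoothing, which makes $u^\eps$ smooth for $t>0$) gives $u^\eps\in C([0,T_r(\eps)];H^r(\T))$, so $E_r^\eps$ is finite, continuous up to $t=0$, and $C^1$ for $t>0$; the blow-up alternative then shows the $H^r$ solution persists as long as $E_s^\eps$ stays bounded, hence on all of $[0,T]$. Taking the constants $C_1(\cdot),C_0'(\cdot),\wt C_0(\cdot)$ to be non-decreasing and using $E_s^\eps(t)\le 2K$, Proposition \ref{prop:dtEne} yields $\dt E_r^\eps(t)\le C_1(2K)(1+E_r^\eps(t))$ on $(0,T)$. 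Setting $y:=1+E_r^\eps$ and integrating the linear inequality $\dt y\le C_1(2K)\,y$ (Gronwall) gives
\[
1+E_r^\eps(t)\le \big(1+E_r^\eps(0)\big)\,e^{C_1(2K)T},\qquad t\in[0,T].
\]
By \eqref{bdEr00} at $t=0$ (where $u^\eps(0)=\phi$) we have $E_r^\eps(0)\le C_0'(\|\phi\|_{H^s})(1+\|\phi\|_{H^r})$, while the same estimate with $r=s$ shows $K=E_s^\eps(0)\le C_0'(\|\phi\|_{H^s})(1+\|\phi\|_{H^s})$ depends only on $\|\phi\|_{H^s}$; hence $C_1(2K)$ and $T$ depend only on $\|\phi\|_{H^s}$.

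Finally I would return to the Sobolev norm using the last estimate of Lemma \ref{lem:Enebd1} and $E_s^\eps(t)\le 2K$:
\[
\|u^\eps(t)\|_{H^r}\le \wt C_0(E_s^\eps(t))(1+E_r^\eps(t))\le \wt C_0(2K)(1+E_r^\eps(t)),\qquad t\in[0,T].
\]
Inserting the Gronwall bound and absorbing every factor depending only on $\|\phi\|_{H^s}$ into a single non-decreasing constant $C_2(\|\phi\|_{H^s})$, while keeping the single factor $(1+\|\phi\|_{H^r})$, yields the claim uniformly in $\eps\in(0,1)$. The main point requiring care is that the dependence on $\|\phi\|_{H^r}$ stay exactly linear: this is precisely secured by the $(1+E_r^\eps)$-linearity of Proposition \ref{prop:dtEne} and the $(1+\|u^\eps\|_{H^r})$-linearity of Lemma \ref{lem:Enebd1}, so that no higher powers of $\|\phi\|_{H^r}$ can enter.
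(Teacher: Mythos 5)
Your proposal is correct and follows essentially the same route as the paper: bound $E_s^\eps(t)\le 2K$ on $[0,T]$, apply Proposition \ref{prop:dtEne} with $C_1(2K)$ as a uniform coefficient, integrate the resulting linear differential inequality for $E_r^\eps$, and convert back to $\|u^\eps\|_{H^r}$ via Lemma \ref{lem:Enebd1} at both endpoints. The only differences are cosmetic: the paper exploits the specific choice of $T$ in \eqref{time1} to simplify $e^{C_1(2K)T}<2$, while you keep it as a constant depending on $\|\phi\|_{H^s}$, and you spell out the persistence of the regularized solution in $H^r$ (via contraction plus blow-up alternative), a point the paper leaves implicit.
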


\begin{proof}
Proposition \ref{prop:dtEne}, \eqref{Eesp0}, and \eqref{time1} imply that
\[
\sup_{t \in [0,T]}
E^\eps_r(t)
\le
( 1 + E^\eps_r(0)) e^{C_1(2K) T} -1
< 1 + 2 E^\eps_r(0).
\]
The desired bound follows from Lemma \ref{lem:Enebd1}.
\end{proof}

By Corollary \ref{cor:enebdur},
there exists $u \in L^{\infty}([0,T];H^s(\T))$
and a sequence $\{ \eps_j \}_{j \in \N} \subset (0,1)$ such that
\[
\lim_{j \to \infty} \eps_j =0
\]
and $\{ u^{\eps_j} \}_{j \in \N}$ converges weak$\ast$ to $u$ in $L^\infty ([0,T]; H^s(\T))$.
Note that $u$ is a solution to \eqref{NLS} in the sense of Definition \ref{def:sol}.
Moreover,
if $\phi \in H^r(\T)$ for $r$ satisfying \eqref{condsr0},
by taking a further subsequence $\{ \eps_j' \}_{j \in \N} \subset \{ \eps_j \}_{j \in \N}$,
$\{ u^{\eps_j'} \}_{j \in \N}$ converges weak$\ast$ to $u$ in $L^\infty ([0,T]; H^r(\T))$.
Especially,
we have $u \in L^\infty([0,T]; H^r(\T))$.

Next, we prove that $u$ belongs to $C([0,T];H^{s-1}(\T))$.
\begin{proposition}
\label{prop:diffHs-1}

%Let $s,r \in \R$ satisfy \eqref{condsr0}.
Let $s> \frac 52$.
Assume that $F$ satisfies \eqref{Fbez}.
Then,
there exists a constant $C_3( \| \phi \|_{H^s})>0$ such that
\[
\| u^{\eps_1} - u^{\eps_2} \|_{L_T^\infty H^{s-1}}
\le C_3(\| \phi \|_{H^s}) |\eps_1 - \eps_2|^{\frac 12}
\]
for $\eps_1, \eps_2 \in (0,1)$,
where $T$ is given in \eqref{time1}.
\end{proposition}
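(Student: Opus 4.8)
The plan is to rerun the energy argument of Proposition \ref{prop:dtEne}, now for the difference of the two regularized solutions and one derivative lower, the loss of a derivative being the usual price of a Bona--Smith-type difference estimate. Assume without loss of generality that $\eps_1 > \eps_2$, and set $\wt u := u^{\eps_1} - u^{\eps_2}$, $\wt v := \dx \wt u$, $\wt w := \dx^2 \wt u$, together with the gauged second-order difference $\wt W := e^{-\Ld^{\eps_1}} \wt w$, where $\Ld^{\eps_1}$ is the gauge from \eqref{Ldep}. Mirroring \eqref{Eesp}, I would work with
\[
\wt E(t) := \sqrt{ \| \wt u(t) \|_{H^{s-3}}^2 + \| \wt v(t) \|_{H^{s-3}}^2 + \| \wt W(t) \|_{H^{s-3}}^2 }.
\]
Since $\wt w = e^{\Ld^{\eps_1}} \wt W$ exactly, Proposition \ref{prop:bili} and a Lemma \ref{lem:Enebd1}-type bound on $e^{\Ld^{\eps_1}}$ give $\| \wt u \|_{H^{s-1}} \les \| \wt u \|_{H^{s-3}} + \| \wt w \|_{H^{s-3}} \les (1 + C(\| \phi \|_{H^s})) \wt E$, so that controlling $\wt E$ controls the desired $H^{s-1}$-norm. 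Because $u^{\eps_1}$ and $u^{\eps_2}$ share the datum $\phi$, all these difference quantities vanish at $t = 0$, i.e. $\wt E(0) = 0$, which is what will produce the factor $|\eps_1 - \eps_2|^{\frac12}$.

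Subtracting the systems \eqref{ueps}, \eqref{veps}, \eqref{weps} and the gauged equation \eqref{Weps} written for $\eps_1$ and for $\eps_2$, each difference equation acquires two new features compared with Proposition \ref{prop:dtEne}: a parabolic source of the form $(\eps_1 - \eps_2) \dx^2(\,\cdot\,)$ (for instance $(\eps_1 - \eps_2) \dx^2 w^{\eps_2}$ in the $\wt W$-equation), and coefficient-difference terms such as $(F_\be^{\eps_1} - F_\be^{\eps_2}) \dx w^{\eps_2}$, which are products of a difference controlled by $\wt E$ with quantities bounded through Corollary \ref{cor:enebdur} by $\| \phi \|_{H^s}$. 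The decisive structural point is unchanged: in the $\wt W$-equation the coefficient of $\dx \wt W$ is $P_0 F_\be^{\eps_1}$, and since \eqref{Fbez} is assumed for \emph{every} $\psi \in H^s(\T)$, in particular for $\psi = u^{\eps_1}(t)$, this zero mode is real. Note also that the gauge coefficients $\dx \Ld^{\eps_1}$, $\dx^2 \Ld^{\eps_1}$, $\dt \Ld^{\eps_1}$ lie in $H^{s-2}$, one derivative \emph{above} the energy level $H^{s-3}$, so that they pair favourably against $\wt W$ in every remainder term (this is precisely why the estimate is taken at second-order difference level rather than by gauging $\wt u$ directly).

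The energy estimate for $\dt \wt E^2$ then reproduces the decomposition $\I_1 + \I_2 + \I_3 + \I_4$ of \eqref{enWe}: $\I_1 = 0$ by integration by parts as in \eqref{est1a}, since the top-order coefficient is a real constant; $\I_2$, $\I_3$, the remainder, and the coefficient-difference terms are bounded exactly as in \eqref{est2}--\eqref{est5}, using Proposition \ref{prop:bili2} and, to accommodate the low (for $\tfrac52 < s < 3$, negative) index $s-3$, the sharp bilinear estimate Proposition \ref{prop:bili}, together with the parabolic dissipation $-\eps_1 \| \dx \wt W \|_{H^{s-3}}^2$. The genuinely new contribution is the parabolic source: integrating by parts one derivative and using $\| u^{\eps_2} \|_{H^s} \le C(\| \phi \|_{H^s})$ from Corollary \ref{cor:enebdur}, its size is at most $(\eps_1 - \eps_2)\, C \| \dx \wt W \|_{H^{s-3}}$, and Young's inequality against the dissipation leaves $C^2 (\eps_1 - \eps_2)^2 / \eps_1$, which is $\le C^2 |\eps_1 - \eps_2|$ because $0 < \eps_1 - \eps_2 \le \eps_1$. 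Collecting everything yields
\[
\dt \wt E(t)^2 \le C(\| \phi \|_{H^s}) \, \wt E(t)^2 + C(\| \phi \|_{H^s}) \, |\eps_1 - \eps_2|, \qquad t \in (0,T),
\]
and Gronwall's inequality with $\wt E(0) = 0$ gives $\wt E \le C(\| \phi \|_{H^s}) |\eps_1 - \eps_2|^{\frac12}$ on $[0,T]$; passing back to $\| \wt u \|_{L_T^\infty H^{s-1}}$ as above finishes the proof.

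I expect the main obstacle to be the interplay between the one-derivative loss and the regularity at which the energy is taken. Extracting the exact $\tfrac12$-power hinges entirely on trading the parabolic source, which is a full derivative rougher than the energy norm, against the parabolic dissipation through $(\eps_1 - \eps_2)^2 / \eps_1 \le |\eps_1 - \eps_2|$. Equally delicate is that the whole argument must run at regularity $s-3$, where the Sobolev indices can be negative, so the commutator and product bounds have to be used in the sharp form of Proposition \ref{prop:bili} rather than Proposition \ref{prop:bili2}; here the bookkeeping is saved by the fact that the gauge coefficients are more regular ($H^{s-2}$) than the energy level, a feature that would \emph{fail} if one tried to gauge the difference at zeroth order and estimate directly in $H^{s-1}$, since then $\dt \Ld^{\eps_1}$ would be one derivative too rough to pair with the top variable.
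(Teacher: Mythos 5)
Your overall scheme (difference system, gauge transformation to kill the $F_\be^{\eps_1}\dx(\cdot)$ term, reality of the zero mode via \eqref{Fbez}, Gronwall from vanishing initial difference) is the right one, and your Young-vs-dissipation treatment of the parabolic source does produce the $|\eps_1-\eps_2|^{\frac12}$ correctly. However, your specific choice of gauging the \emph{second}-order difference $\wt w = \dx^2 \wt u$ and running the energy at level $H^{s-3}$ creates a genuine gap in the range $\frac52 < s < 3$, where $s-3<0$. The problem is the conjugate-derivative term $F_{\cj \be}^{\eps_1} e^{-\Ld^{\eps_1}+\cj{\Ld^{\eps_1}}}\, \cj{\dx \wt W}$ in the gauged equation. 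This term carries no factor of $\eps_1$, so, unlike $-i\eps_1 (P_{\neq 0}F_\be^{\eps_1})\dx \wt W$, it cannot be absorbed into the dissipation $-\eps_1 \| \dx \wt W\|_{H^{s-3}}^2$; the only available mechanism is the cancellation used in \eqref{est31}: split off the commutator $[\jb{\dx}^{s-3}, F_{\cj\be}^{\eps_1}e^{-\Ld^{\eps_1}+\cj{\Ld^{\eps_1}}}]\cj{\dx \wt W}$ and integrate the remaining symmetric part by parts. But Proposition \ref{prop:Com1} (and Lemma \ref{lem:Com1a}, whose proof uses $s\ge 0$ in all three frequency regions) is stated only for nonnegative order, so at order $s-3\in(-\frac12,0)$ you have no commutator estimate; and Proposition \ref{prop:bili}, being a product estimate, cannot substitute for it. A naive product bound on this term costs a full derivative on $\wt W$ and gives at best $\les \|\wt W\|_{H^{s-2}}\|\wt W\|_{H^{s-3}} \le C(\|\phi\|_{H^s})\,\wt E$, which is \emph{linear} in $\wt E$ rather than quadratic, and Gronwall then yields only $\wt E \les t$, not $O(|\eps_1-\eps_2|^{\frac12})$. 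Your parenthetical remark that the negative index is handled by ``the sharp bilinear estimate Proposition \ref{prop:bili} rather than Proposition \ref{prop:bili2}'' addresses products, not this commutator, so the step fails as written (it is fine for $s\ge 3$).

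The gap is avoidable, and the paper's proof shows how: gauge the \emph{first}-order difference instead. The paper sets $\breve v = \dx(u^{\eps_1}-u^{\eps_2})$ and $\breve V = e^{-\Ld^{\eps_1}}\breve v$, and runs the energy on $(\breve u, \breve V)$ at level $H^{s-2}$. Since $w^{\eps_j}=\dx v^{\eps_j}$, the equation for $\breve v$ already contains the same troublesome structure $F_\be^{\eps_1}\dx \breve v + F_{\cj\be}^{\eps_1}\cj{\dx \breve v}$, the gauge and \eqref{Fbez} kill its zero mode exactly as in your argument, and now every commutator is at order $s-2>\frac12>0$, where Proposition \ref{prop:Com1} applies; control of $\|\breve V\|_{H^{s-2}}$ still recovers $\|\breve u\|_{H^{s-1}}$ via $\breve v = e^{\Ld^{\eps_1}}\breve V$ (this is \eqref{diffbd1}). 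Two further remarks. First, your closing heuristic—that one must descend to the second-order difference because otherwise $\dt \Ld^{\eps_1}$ is too rough—only rules out gauging $\wt u$ itself and estimating at $H^{s-1}$; at the paper's intermediate level the gauge coefficients lie in $H^{s-2}$, which is exactly the energy level and is sufficient since $s-2>\frac12$. Second, the Young absorption for the parabolic source, while correct, is not needed in the paper's setup: because the difference energy sits one derivative below the uniform regularity, both factors in $(\eps_1-\eps_2)\Re(\dx v^{\eps_2}, \dx\breve V)_{H^{s-2}}$ are uniformly bounded via Proposition \ref{prop:dtEne}, so this term is $O(|\eps_1-\eps_2|)$ outright (see \eqref{est3-00}).
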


\begin{proof}
Let $\eps_1 ,\eps_2 \in (0,1)$.
Without loss of generality,
we may assume that $1>\eps_1>\eps_2>0$.
We set
\[
\breve u = u^{\eps_1} - u^{\eps_2},
\qquad
\breve v = v^{\eps_1} - v^{\eps_2}
\]
for short.

It follows from \eqref{ueps} that
\begin{align}
\dt \breve u + i \dx^2 \breve u
&=
\eps _1 \dx^2 \breve u
+ (\eps_1 - \eps_2) \dx^2 u^{\eps_2}
+
F^{\eps_1} - F^{\eps_2} .
\label{NLSdu}
\end{align}
Moreover,
by \eqref{veps}, we have
\begin{equation}
\begin{aligned}
\dt \breve v + i\dx^2 \breve v
&=
\eps _1 \dx^2 \breve v
+ (\eps_1 - \eps_2) \dx^2 v^{\eps_2}
%\\
%&\quad
+
F_{\be}^{\eps_1} \dx \breve v
+
F_{\cj \be}^{\eps_1} \cj{\dx \breve v}
\\
&\quad
+ (F_{\be}^{\eps_1} - F_{\be}^{\eps_2}) \dx v^{\eps_2}
+ (F_{\cj \be}^{\eps_1} - F_{\cj \be}^{\eps_2}) \cj {\dx v^{\eps_2}}
+ \breve R
,
\end{aligned}
\label{NLSdv}
\end{equation}
where
\begin{align*}
\breve R
&:=
F_{\al}^{\eps_1} \breve v
+ F_{\cj \al}^{\eps_1} \cj {\breve v}
+ (F_{\al}^{\eps_1} - F_{\al}^{\eps_2}) v^{\eps_2}
+ ( F_{\cj \al}^{\eps_1} - F_{\cj \al}^{\eps_2}) \cj {v^{\eps_2}}.
\end{align*}
We apply a gauge transformation to eliminate the problematic term in the third part on the right-hand side of \eqref{NLSdv}.
Define
\[
\breve V
=
e^{- \Ld^{\eps_1}} \breve v,
\]
where
$\Ld^{\eps_1}$ is defined in \eqref{Ldep}.
Then,
by the same calculation as in \eqref{NLS2eps},
$\breve V$ satisfies
\begin{equation}
\begin{aligned}
\dt \breve V + i \dx^2 \breve V
&=
\eps _1 \dx^2 \breve V
+ (\eps_1 - \eps_2) 
e^{- \Ld^{\eps_1}} \dx^2 v^{\eps_2}
\\
&\quad
+
(P_0 F_\be^{\eps_1}) \dx \breve V
- 
i \eps_1 (P_{\neq 0} F_{\be}^{\eps_1}) \dx \breve V
\\
&\quad
+
F_{\cj \be}^{\eps_1} e^{-\Ld^{\eps_1} + \cj{\Ld^{\eps_1}}} \cj {\dx \breve V}
\\
&\quad
+
e^{-\Ld^{\eps_1}}(F_{\be}^{\eps_1} - F_{\be}^{\eps_2}) \dx v^{\eps_2}
+
e^{-\Ld^{\eps_1}} (F_{\cj \be}^{\eps_1} - F_{\cj \be}^{\eps_2}) \cj{\dx v^{\eps_2}}
\\
&\quad
+
\breve \fR,
\label{NLSdW0}
\end{aligned}
\end{equation}
where 
\begin{equation}
\begin{aligned}
\breve \fR 
&:=
(P_0 F_\be^{\eps_1}) \dx \Ld^{\eps_1} \cdot \breve V
- i \eps_1 (P_{\neq 0} F_\be^{\eps_1}) \dx \Ld^{\eps_1} \cdot \breve V
\\
&\quad
+
F_{\cj \be}^{\eps_1} e^{-\Ld^{\eps_1} + \cj{\Ld^{\eps_1}}} \cj{\dx \Ld^{\eps_1} \cdot \breve V}
+ e^{- \Ld^{\eps_1}} \breve R
\\
&\quad
+
\big(
-\dt \Ld^{\eps_1}
+ (i-\eps_1) (\dx \Ld^{\eps_1})^2
- (i-\eps_1) \dx^2 \Ld^{\eps_1} \big)
\breve V
.
\label{wtfR00}
\end{aligned}
\end{equation}

Set
\begin{equation}
{\breve E}_s (t)
:=
\sqrt{
\|\breve u(t)\|_{H^{s-1}}^2
+
\|\breve V(t)\|_{H^{s-1}}^2
}.
\label{Ewt0}
\end{equation}
We prove that
there exists a constant $\breve C_3( \| \phi \|_{H^s} )>0$ such that
\begin{equation}
\dt \big( \breve E_{s-1}(t)^2 \big)
\le \breve C_3( \| \phi \|_{H^s})
\big(
|\eps_1-\eps_2|
+
\breve E_{s-1}(t)^2 \big)
\label{diffEwt}
\end{equation}
for $t \in (0,T)$ and $1>\eps_1> \eps_2>0$.
Then,
by solving this differential inequality and $\breve E_{s-1}(0)=0$,
we have
\[
\sup_{t \in [0,T]}
\breve E_{s-1}(t)^2
\le
|\eps_1-\eps_2|
e^{\breve C_3 (\| \phi \|_{H^s}) T}
.
\]
Propositions \ref{prop:bili} and \ref{prop:dtEne} and Lemma \ref{lem:Enebd1} yield that
\begin{equation}
\begin{aligned}
\| \breve v \|_{L_T^\infty H^{s-2}}
&=
\big\| e^{\Ld^{\eps_1}} \breve V \big\|_{L_T^\infty H^{s-2}}
\les
\big\| e^{\Ld^{\eps_1}} \big\|_{L_T^\infty H^{s-2}} \| \breve V \|_{L_T^\infty H^{s-2}}
\\
&\le C( \|\phi \|_{H^s})
\sup_{t \in [0,T]} \breve E_{s-1} (t).
\end{aligned}
\label{diffbd1}
\end{equation}
Hence, we obtain the desired bound.

As in the proof of Proposition \ref{prop:dtEne},
we suppress the time dependence below.
It follows from \eqref{NLSdu} and $1>\eps_1>\eps_2>0$ that
\begin{align*}
&\frac 12 \dt \| \breve u \|_{H^{s-2}}^2
\\
&=
- \eps_1 \| \dx \breve u \|_{H^{s-2}}^2
- (\eps_1-\eps_2) \Re (\dx u^{\eps_2}, \dx \breve u )_{H^{s-2}}
\\
&\quad
+ \Re (F^{\eps_1} - F^{\eps_2}, \breve u)_{H^{s-2}}
\\
&\le
|\eps_1-\eps_2|
\cdot
\| u^{\eps_2} \|_{H^{s-1}}
\| \breve u \|_{H^{s-1}}
+ \| F^{\eps_1} - F^{\eps_2} \|_{H^{s-2}}
\| \breve u \|_{H^{s-2}}.
\end{align*}
From Propositions \ref{prop:bili} and \ref{prop:dtEne},
and \eqref{diffbd1},
we have
\begin{align*}
\| F^{\eps_1} - F^{\eps_2} \|_{H^{s-2}}
&\le
C( \| \phi \|_{H^s})
(\| \breve u \|_{H^{s-2}} + \| \breve v \|_{H^{s-2}} )
\le 
C( \| \phi \|_{H^s})
\breve E_{s-1}.
\end{align*}
Hence,
we obtain that
\begin{equation}
\dt \| \breve u \|_{H^{s-2}}^2
\le
C( \| \phi \|_{H^s})
\big(
|\eps_1-\eps_2|
+
\breve E_{s-1} \|\breve u\|_{H^{s-2}}
\big).
\label{est3u}
\end{equation}

It follows from \eqref{NLSdW0} that
\begin{equation}
\begin{aligned}
\frac 12 \dt \| \breve V \|_{H^{s-2}}^2
&=
- \eps_1 \| \dx \breve V \|_{H^{s-2}}^2
\\
&\quad
- (\eps_1-\eps_2) \Re (\dx v^{\eps_2}, \dx \breve V )_{H^{s-2}}
\\
&\quad
+
\Re \big( (P_0 F_{\be}^{\eps_1}) \dx \breve V , \breve V \big)_{H^{s-2}}
\\
&\quad
+
\Re \big( -i \eps_1  (P_{\neq 0} F_{\be}^{\eps_1}) \dx \breve V , \breve V \big)_{H^{s-2}}
\\
&\quad
+ 
\Re \big( F_{\cj \be}^{\eps_1} e^{-\Ld^{\eps_1} + \cj{\Ld^{\eps_1}}} \cj {\dx \breve V}, \breve V \big)_{H^{s-2}}
\\
&\quad
+
\Re \big( e^{-\Ld^{\eps_1}}(F_{\be}^{\eps_1}- F_{\be}^{\eps_2}) \dx v^{\eps_2}, \breve V \big)_{H^{s-2}}
\\
&\quad
+
\Re \big(e^{-\Ld^{\eps_1}} (F_{\cj \be}^{\eps_1}- F_{\cj \be}^{\eps_2}) \cj{\dx v^{\eps_2}} , \breve V \big)_{H^{s-2}}
\\
&\quad
+
\Re (\breve \fR ,\breve V)_{H^{s-2}}
\\
&=:
- \eps_1 \| \dx \breve V \|_{H^{s-2}}^2
+
\sum_{j=1}^7 \I_j .
\end{aligned}
\label{enwtW0}
\end{equation}
From Proposition \ref{prop:dtEne},
we have
\begin{equation}
|\I_1|
\le
C( \| \phi \|_{H^s})
|\eps_1-\eps_2|.
\label{est3-00}
\end{equation}
It follows from
the assumption \eqref{Fbez}
and the same calculation as in \eqref{est1a}
that
\begin{equation}
\I_2
=
0.
\label{est3-1}
\end{equation}

For $\I_3$,
we apply a similar calculation as in \eqref{est2a}.
Namely,
we decompose $\I_3$ as follows:
\begin{equation}
\begin{aligned}
\I_3
&=
\eps_1
\Im  \big( \big[ \jb{\dx}^{s-2},P_{\neq 0} F^{\eps_1}_\be \big] \dx \breve V, \jb{\dx}^{s-2} \breve V \big)_{L^2}
\\
&\quad
+
\eps_1
\Im \big( (P_{\neq 0} F^{\eps_1}_\be) \jb{\dx}^{s-2} \dx \breve V, \jb{\dx}^{s-2} \breve V \big)_{L^2}
\\
&=: \I_{3,1} + \I_{3,2} .
\label{est2aw}
\end{aligned}
\end{equation}
It follows from
Propositions \ref{prop:Com1} and \ref{prop:dtEne}
that
\begin{equation}
\begin{aligned}
|\I_{3,1}|
&\le
\big\| \big[ \jb{\dx}^{s-2},P_{\neq 0} F^{\eps_1}_\be \big] \dx \breve V \big\|_{L^2}
\| \breve V \|_{H^{s-2}}
\\
&\les
\|P_{\neq 0} F^{\eps_1}_\be\|_{H^{s-1}} \|\breve V \|_{H^{s-2}}
\|\breve V \|_{H^{s-2}}
\\
&\le
C( \| \phi \|_{H^s} ) \breve E_{s-1} \| \breve V \|_{H^{s -2}}.
\end{aligned}
\label{est2bw}
\end{equation}
A direct calculation yields that
\begin{equation}
\begin{aligned}
|\I_{3,2}|
&\le
\eps_1
\| P_{\neq 0} F^{\eps_1}_\be \|_{L^\infty}
\| \jb{\dx}^{s-2} \dx \breve V \|_{L^2} \| \jb{\dx}^{s-2} \breve V \|_{L^2}
\\
&\le
\frac{\eps_1}2 \| \dx \breve V \|_{H^{s-2}}^2
+
\frac{\eps_1}2
\| P_{\neq 0} F^{\eps_1}_\be \|_{L^\infty}^2
\| \breve V \|_{H^{s-2}}^2
\\
&\le
\frac{\eps_1}2 \| \dx \breve V \|_{H^{s-2}}^2
+
C( \| \phi \|_{H^s}) \breve E_{s-1} \| \breve V \|_{H^{s-2}}.
\end{aligned}
\label{est2caw}
\end{equation}
From \eqref{est2aw}--\eqref{est2caw},
we obtain that
\begin{equation}
|\I_3|
\le
\frac{\eps_1}2 \| \dx \breve V \|_{H^{s-2}}^2
+
C(\| \phi \|_{H^s})
\breve E_{s-1} \| \breve V \|_{H^{s-2}}.
\label{est3-2}
\end{equation}
Moreover, the same calculation as in \eqref{est31} yields that
\begin{equation}
|\I_4|
\le
C(\| \phi \|_{H^s})
\breve E_{s-1} \| \breve V \|_{H^{s-2}}.
\label{est3-3}
\end{equation}

Propositions \ref{prop:bili} and \ref{prop:dtEne}, Lemma \ref{lem:Enebd1},
and \eqref{diffbd1}
yield that
we have
\begin{equation}
\begin{aligned}
|\I_5|
&\le
\big\| e^{-\Ld^{\eps_1}} (F^{\eps_1}_\be - F^{\eps_2}_\be) \dx v^{\eps_2} \big\|_{H^{s-2}}
\| \breve V \|_{H^{s-2}}
\\
&\le
\big\| e^{-\Ld^{\eps_1}} \big\|_{H^{s-2}} \| F^{\eps_1}_\be - F^{\eps_2}_\be \|_{H^{s-2}}
\| \dx v^{\eps_2} \|_{H^{s-2}}
\| \breve V \|_{H^{s-2}}
\\
&\le
C(\| \phi \|_{H^s})
\breve E_{s-1} \| \breve V \|_{H^{s-2}}.
\end{aligned}
\label{est3-4}
\end{equation}
The same calculation as in \eqref{est3-4}, we obtain
\begin{equation}
|\I_6|
\le
C(\| \phi \|_{H^s}) \breve E_{s-1} \|\breve V \|_{H^{s-2}}.
\label{est3-5}
\end{equation}
By \eqref{wtfR00},
$\breve \fR$ contains at least one of $\breve u$ and $\breve V$.
Hence, we have
\begin{equation}
|\I_7|
\les
\| \breve \fR \|_{H^{s-2}} \|\breve V \|_{H^{s-2}}
\le
C(\| \phi \|_{H^s})
\breve E_{s-1}
\|\breve V \|_{H^{s-2}}.
\label{est3-7}
\end{equation}

It follows from \eqref{enwtW0}--\eqref{est3-1} and \eqref{est3-2}--\eqref{est3-7} that
\begin{equation}
\dt \| \breve V \|_{H^{s-2}}^2
\le
C( \| \phi \|_{H^s})
\big(
|\eps_1-\eps_2|
+
\breve E_{s-1} \|\breve V \|_{H^{s-2}}
\big).
\label{est3w}
\end{equation}
By \eqref{Ewt0}, \eqref{est3u}, and \eqref{est3w},
we obtain \eqref{diffEwt}.
\end{proof}

Proposition \ref{prop:diffHs-1} yields that
$\{u^{\eps}\}$ converges in $C([0,T]; H^{s-1}(\T))$.
Hence,
we have
\[
u = \lim_{\eps \to +0} u^\eps \in C([0,T]; H^{s-1}(\T)),
\]
where $u$ is the solution to \eqref{NLS} constructed before Proposition \ref{prop:diffHs-1}.
%Moreover,
%since
%\[
%u \in L^\infty ([0,T]; H^s(\T)) \cap C([0,T]; H^{s-1}(\T))
%\subset C_w([0,T]; H^s(\T)),
%\]
%$u$ is a solution to \eqref{NLS} in $H^s(\T)$ on $[0,T]$ in the sense of Definition \ref{def:sol}.
%In fact,
%it suffices to show
%$u \in C_w([0,T]; H^s(\T))$.
%For $\psi \in H^s(\T)$ and $\dl>0$,
%there exists $\psi_\dl \in C^\infty(\T)$ such that $\| \psi - \psi_\dl \|_{H^s}<\dl$.
%Then,
%we have
%\begin{align*}
%&|(u(t) - u(t'), \psi)_{H^s}|
%\\
%&\le
%|(u(t) - u(t'), \psi - \psi_\dl)_{H^s}|
%+ |(u(t) - u(t'), \psi_\dl)_{H^s}|
%\\
%&\le
%\| u(t) - u(t') \|_{H^s} \| \psi - \psi_\dl \|_{H^s}
%+ \| u(t) - u(t') \|_{L^2} \| \psi_\dl \|_{H^{2s}}
%\\
%&\le
%2 \| u \|_{L_T^\infty H^s} \dl
%+ \| u(t) - u(t') \|_{L^2} \| \psi_\dl \|_{H^{2s}}
%\end{align*}
%for $t,t' \in [0,T]$.
%Hence, $u \in C_w([0,T]; H^s(\T))$.

\begin{remark}
\label{rem:pers}
\rm
If $\phi \in H^r(\T)$ for some $r>s$,
the solution $u$ to \eqref{NLS} constructed above satisfies
\[
u \in L^\infty([0,T]; H^r(\T)) \cap C([0,T]; H^{r-1}(\T)).
\]
Moreover,
we have the bound
\begin{equation}
\| u \|_{L_T^\infty H^r} \le C_2 ( \| \phi \|_{H^s}) \| \phi \|_{H^r}.
\label{est:solbd1}
\end{equation}
Note that $r=s+2$ suffices for the argument in Subsection \ref{Subsec:code} below.
\end{remark}

A slight modification of the proof of Proposition \ref{prop:diffHs-1} yields the following:

\begin{corollary}
\label{cor:diffHs-1s}
Let $s> \frac 52$.
Assume that $F$ satisfies \eqref{Fbez}.
Let $u^{(j)} \in C([0,T]; H^s(\T))$ be a solution to \eqref{NLS} in $H^s(\T)$ on $[0,T]$ with the initial data $\phi^{(j)} \in H^s(\T)$ for $j=1,2$.
Then,
there exists a constant 
\[
C_4( \| u^{(1)} \|_{L_T^\infty H^s}, \| u^{(2)} \|_{L_T^\infty H^s} )>0
\]
such that
\[
\| u^{(1)} - u^{(2)} \|_{L_T^\infty H^{s-1}}
\le C_4( \| u^{(1)} \|_{L_T^\infty H^s}, \| u^{(2)} \|_{L_T^\infty H^s} ) \| \phi^{(1)} - \phi^{(2)} \|_{H^{s-1}}.
\]
\end{corollary}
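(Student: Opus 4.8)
The plan is to transcribe the proof of Proposition~\ref{prop:diffHs-1}, now with the regularization parameter set to zero and with the two initial data allowed to differ. Write $\breve u = u^{(1)} - u^{(2)}$ and $\breve v = \dx \breve u = v^{(1)} - v^{(2)}$, where $v^{(j)} = \dx u^{(j)}$. Since $F$ is a polynomial, the nonlinear difference telescopes: $F^{(1)} - F^{(2)}$ is a sum of terms, each being a product of $\breve u$, $\breve v$, or their conjugates with a polynomial coefficient in $u^{(1)},u^{(2)},v^{(1)},v^{(2)}$ and conjugates. Differentiating the equation for $\breve u$ in $x$ and isolating the top-order contribution exactly as in \eqref{NLSdv}, I obtain
\[
\dt \breve v + i \dx^2 \breve v = F_\be^{(1)} \dx \breve v + F_{\cj\be}^{(1)} \cj{\dx \breve v} + (F_\be^{(1)} - F_\be^{(2)})\dx v^{(2)} + (F_{\cj\be}^{(1)} - F_{\cj\be}^{(2)})\cj{\dx v^{(2)}} + \breve R,
\]
with $\breve R$ collecting the lower-order terms (those from $F_\al,F_{\cj\al}$ and the corresponding differences). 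The first term on the right is the troublesome transport term.

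Next I apply the gauge transformation $\breve V = e^{-\Ld^{(1)}} \breve v$, where $\Ld^{(1)} = -\frac i2 \dx^{-1} F_\be^{(1)}$ is the $\eps=0$ specialization of \eqref{Ldep}. Repeating the computation leading to \eqref{NLSdW0}, but now with no parabolic term and no factor $\eps_1$, the equation for $\breve V$ becomes
\[
\dt \breve V + i \dx^2 \breve V = (P_0 F_\be^{(1)})\dx \breve V + F_{\cj\be}^{(1)} e^{-\Ld^{(1)} + \cj{\Ld^{(1)}}}\cj{\dx \breve V} + (\text{difference terms}) + \breve\fR,
\]
where $\breve\fR$ is as in \eqref{wtfR00} and contains at least one factor of $\breve u$ or $\breve V$. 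Setting $\breve E_{s-1}(t)^2 = \|\breve u(t)\|_{H^{s-2}}^2 + \|\breve V(t)\|_{H^{s-2}}^2$, I record that $\breve E_{s-1}$ controls $\|\breve u\|_{H^{s-1}}$ through the gauge identity $\breve v = e^{\Ld^{(1)}}\breve V$, exactly as in \eqref{diffbd1}.

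The energy estimate then follows the computation \eqref{enwtW0}--\eqref{est3w}, with three simplifications and the crucial cancellation intact. First, applying \eqref{Fbez} with $\psi = u^{(1)}(t) \in H^s(\T)$ gives $\Im P_0 F_\be^{(1)} = 0$, so $P_0 F_\be^{(1)}$ is real and the transport term vanishes after integration by parts as in \eqref{est1a}; this is the whole reason the gauge transformation is still needed even though $\eps=0$. Second, the conjugate-derivative term is controlled by Proposition~\ref{prop:Com1} together with an integration by parts moving the derivative onto the smooth coefficient, as in \eqref{est31}. Third, since $\eps=0$, the parabolic term and the entire $\eps_1$-carrying splitting \eqref{est2aw}--\eqref{est2caw} are simply absent, and there is no inhomogeneous $|\eps_1-\eps_2|$ contribution. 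The difference terms $(F_\be^{(1)} - F_\be^{(2)})\dx v^{(2)}$, and likewise the $\cj\be$ one, are bounded by $C\breve E_{s-1}\|\breve V\|_{H^{s-2}}$ using Propositions~\ref{prop:bili} and \ref{prop:bili2} to estimate $\|F_\be^{(1)} - F_\be^{(2)}\|_{H^{s-2}} \les C\breve E_{s-1}$, exactly as in \eqref{est3-4}. Collecting everything yields the \emph{homogeneous} differential inequality $\dt(\breve E_{s-1}^2) \le C\, \breve E_{s-1}^2$ with $C = C(\|u^{(1)}\|_{L_T^\infty H^s},\|u^{(2)}\|_{L_T^\infty H^s})$; Gronwall's inequality together with $\breve E_{s-1}(0) \les C\|\phi^{(1)} - \phi^{(2)}\|_{H^{s-1}}$, which comes from $\breve V(0) = e^{-\Ld^{(1)}(0)}\dx(\phi^{(1)} - \phi^{(2)})$, then gives the claim.

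The one genuine departure from Proposition~\ref{prop:diffHs-1}, and the point requiring care, is that here $u^{(1)},u^{(2)}$ are actual $H^s$ solutions rather than the smooth regularized solutions $u^\eps$, so the time differentiation of $\|\breve V(t)\|_{H^{s-2}}^2$ and the integrations by parts must be justified at threshold regularity. By Remark~\ref{rem:sold} each $u^{(j)} \in C([0,T]; H^s(\T)) \cap C^1([0,T]; H^{s-2}(\T))$, hence $\breve u \in C([0,T]; H^{s-1}(\T)) \cap C^1([0,T]; H^{s-3}(\T))$ and, after gauging, $\breve V$ has the same regularity; this is precisely what the time-derivative identity recalled in Remark~\ref{rem:notPol} requires, so all the energy manipulations above are legitimate. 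I expect this regularity bookkeeping, rather than any of the estimates (which are simpler than in the $\eps>0$ case), to be the only real obstacle.
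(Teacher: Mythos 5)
Your proposal is correct and follows essentially the same route as the paper: the paper's proof consists precisely of invoking Remark \ref{rem:sold} to get $v^{(j)} = \dx u^{(j)} \in C([0,T];H^{s-1}(\T)) \cap C^1([0,T];H^{s-3}(\T))$, so that the identity $\dt \| v^{(j)}(t) \|_{H^{s-2}}^2 = 2 \Re ( \jb{\dx}^{s-3} \dt v^{(j)}(t), \jb{\dx}^{s-1} v^{(j)}(t) )_{L^2}$ is justified, and then rerunning the argument of Proposition \ref{prop:diffHs-1} with $u^{(1)},u^{(2)}$ in place of $u^{\eps_1},u^{\eps_2}$ — exactly your plan, including the homogeneous Gronwall step with nonzero initial energy. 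The only slip is notational: the threshold regularity you record should be attributed to $\breve v = \dx \breve u$ (and to $\breve V$ after gauging), since $\breve u$ itself enjoys the better regularity $C([0,T];H^{s}(\T)) \cap C^1([0,T];H^{s-2}(\T))$; this does not affect the argument.
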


\begin{proof}
It follows from $u^{(j)} \in C([0,T]; H^s(\T))$ and Remark \ref{rem:sold} that
\[
v^{(j)} := \dx u^{(j)} \in C ([0,T]; H^{s-1}(\T)) \cap C^1([0,T]; H^{s-3}(\T)).
\]
Accordingly,
we have
\begin{align*}
\dt \| v^{(j)} (t) \|_{H^{s-2}}^2
&= 2 \Re ( \jb{\dx}^{s-3} \dt v^{(j)} (t), \jb{\dx}^{s-1} v^{(j)} (t))_{L^2}
\end{align*}
for $t \in [0,T]$.
Therefore,
we can apply the same argument as in the proof of Proposition \ref{prop:diffHs-1}
replacing $u^{\eps_1}$ and $u^{\eps_2}$ with $u^{(1)}$ and $u^{(2)}$, respectively.
\end{proof}

In particular,
Corollary \ref{cor:diffHs-1s} shows the uniqueness of a solution to \eqref{NLS}.

\subsection{Continuous dependence on initial data}
\label{Subsec:code}

To prove well-posedness,
we use the Bona-Smith-type approximation as in \cite{BoSm75}.
Let $s> \frac 52$ and $\phi \in H^s(\T)$.
For $N \in \N$, we set 
\begin{equation}
\phi_N (x) := \sum_{\substack{k \in \Z \\ |k| \le N}} \ft \phi(k) e^{ikx}.
\label{BS_ini1}
\end{equation}
A simple calculation yields the following:

\begin{lemma}
\label{lem:BoSm}
The truncated function $\phi_N$ defined in \eqref{BS_ini1} satisfies the followings:
\begin{enumerate}
\item
$\phi_N \in C^{\infty}(\T)$ for any $N \in \N$.

\item
$\lim_{N \to \infty} \|\phi_N - \phi\|_{H^s} =0$.

\item
For $r>s$, we have $\| \phi_N \|_{H^r} \les N^{r-s} \| \phi \|_{H^s}$.

\item
For $r<s$, we have
$\|\phi_N - \phi \|_{H^r} \les N^{-(s-r)} \|\phi\|_{H^s}$.
\end{enumerate}
\end{lemma}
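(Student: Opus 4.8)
The proof rests entirely on the Plancherel identity
\[
\| f \|_{H^s}^2 = \sum_{k \in \Z} \jb{k}^{2s} |\ft f(k)|^2,
\]
together with the observation that $\ft{\phi_N}(k) = \ft \phi(k)$ for $|k| \le N$ and $\ft{\phi_N}(k) = 0$ for $|k| > N$. Thus each of the four assertions reduces to a one-line frequency-space estimate, and the only genuine input is that $\phi \in H^s(\T)$ guarantees the convergence of $\sum_k \jb{k}^{2s}|\ft\phi(k)|^2$.

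For (i), I note that $\phi_N$ is a finite linear combination of the exponentials $e^{ikx}$, $|k| \le N$, hence a trigonometric polynomial, which is manifestly in $C^\infty(\T)$. For (ii), I compute
\[
\| \phi_N - \phi \|_{H^s}^2 = \sum_{|k| > N} \jb{k}^{2s} |\ft \phi(k)|^2,
\]
which is the tail of the convergent series $\sum_k \jb{k}^{2s}|\ft\phi(k)|^2$; hence it tends to $0$ as $N \to \infty$.

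For (iii) and (iv), I insert and remove a factor $\jb{k}^{2s}$ and compare $\jb{k}$ with $N$ on the relevant frequency range. In (iii), since $r > s$ the exponent $r - s$ is positive, and on the range $|k| \le N$ one has $\jb{k} \le \jb{N} \les N$ (using $N \in \N$, so $N \ge 1$), giving
\[
\| \phi_N \|_{H^r}^2 = \sum_{|k| \le N} \jb{k}^{2(r-s)} \jb{k}^{2s} |\ft \phi(k)|^2 \les N^{2(r-s)} \| \phi \|_{H^s}^2 .
\]
In (iv), since $r < s$ the exponent $s - r$ is positive, and on the tail $|k| > N$ one has $\jb{k} > \jb{N} \ges N$, so $\jb{k}^{-2(s-r)} \les N^{-2(s-r)}$, whence
\[
\| \phi_N - \phi \|_{H^r}^2 = \sum_{|k| > N} \jb{k}^{-2(s-r)} \jb{k}^{2s} |\ft \phi(k)|^2 \les N^{-2(s-r)} \| \phi \|_{H^s}^2 .
\]
Taking square roots yields the stated bounds.

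There is no real obstacle here: the entire argument is a bookkeeping of powers of $\jb{k}$ against $N$, and the only points requiring any care are the direction of the inequality $\jb{k}^{\pm} \les N^{\pm}$ on each frequency regime (which is dictated by the sign of $r - s$) and the elementary comparison $\jb{N} \sim N$ valid for $N \ge 1$. This is precisely the ``simple calculation'' promised before the statement.
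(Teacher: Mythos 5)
Your proof is correct and is precisely the ``simple calculation'' the paper alludes to: the paper states Lemma \ref{lem:BoSm} without proof, and the intended argument is exactly this Plancherel-based bookkeeping of $\jb{k}$ versus $N$ on each frequency range. Nothing to add.
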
 

Let $N \in \N$ and
let $u_N$ be the the solution to \eqref{NLS} with the initial data
$\phi_N$.
Since $\phi_N \in C^\infty(\T)$,
Remark \ref{rem:pers} yields that
(at least)
\[
u_N \in C([0,T]; H^{s+1}(\T)) \cap C^1([0,T]; H^{s-1}(\T)).
\]
Set
\[
v_N = \dx u_N, \quad w_N = \dx v_N.
\]
For simplicity, we write
\begin{align*}
F^{(N)}
&= F(u_N, v_N, \cj {u_N}, \cj {v_N}),
&
F_{\al}^{(N)}
&= F_\al (u_N, v_N, \cj {u_N}, \cj {v_N}),
\quad
\cdots
\end{align*}

Similarly as in the proof of Proposition \ref{prop:diffHs-1},
for $N,M \in \N$ with $N>M$,
we define
\[
\wt u := u_N - u_M,
\qquad
\wt v := \dx \wt u,
%v_N - v_M,
\qquad
\wt w := \dx^2 \wt u
%w_N - w_M
\]
for short.
By the same calculation as in \eqref{ueps}, \eqref{veps}, and \eqref{weps},
$\wt u$, $\wt v$, and $\wt w$ satisfy the followings:
\begin{align*}
\dt \wt u + i \dx^2 \wt u
&= F^{(N)} - F^{(M)} ,
\\
\dt \wt v + i \dx^2 \wt v
&=
R_1^{(N)} - R_1^{(M)},
\\
\dt \wt w + i \dx^2 \wt w
&=
F_{\be}^{(N)} \dx \wt w
+
F_{\cj \be}^{(N)} \cj{\dx \wt w}
\notag
\\
&\quad
+ (F_{\be}^{(N)} - F_{\be}^{(M)}) \dx w_M
+
(F_{\cj \be}^{(N)} - F_{\cj \be}^{(M)})\cj{\dx w_M}
\\
&\quad
+
R_2^{(N)} - R_2^{(M)},
\notag
\end{align*} 
where 
$R_j^{(\l)}$ is a polynomial in $u_\l,v_\l, w_\l, \cj {u_\l},\cj {v_\l},\cj{w_\l}$
for $\l \in \{N,M\}$ and $j \in \{1,2 \}$.
We apply the gauge transformation to eliminate the problematic term in the first part on the right-hand side of the equation for $\wt w$.
Define
\begin{align*}
\Ld^{(N)}
&:=
-\frac i2 \dx^{-1} F_{\be}^{(N)} \wt w,
&
\wt W
&:=
e^{- \Ld^{(N)}} \wt w.
\end{align*}
By the same calculation as in \eqref{NLS2eps},
$\wt W$ satisfies
\begin{equation}
\begin{aligned}
\dt \wt W + i \dx^2 \wt W
&=
(P_0 F_{\be}^{(N)}) \dx \wt W  
+ 
F_{\cj \be}^{(N)} e^{-\Ld^{(N)} + \cj{\Ld^{(N)}}} \cj {\dx \wt W}
\\
&\quad
+
e^{-\Ld^{(N)}}(F_{\be}^{(N)} - F_{\be}^{(M)}) \dx w_M
\\
&\quad
+
e^{-\Ld^{(N)}} (F_{\cj \be}^{(N)} - F_{\cj \be}^{(M)}) \cj{\dx w_M}
+
\wt \fR,
\label{NLSdW}
\end{aligned}
\end{equation}
where 
\begin{equation}
\begin{aligned}
\wt \fR 
&:=
(P_0 F_{\be}^{(N)}) \dx \Ld^{(N)} \cdot \wt W  
+
F_{\cj \be}^{(N)} e^{-\Ld^{(N)} + \cj{\Ld^{(N)}}} \cj {\dx \Ld^{(N)}} \cdot \cj{\wt W}
\\
&\quad
+
e^{-\Ld^{(N)}} ( R_2^{(N)} - R_2^{(M)})
\\
&\quad
+
\big( -\dt \Ld^{(N)} + i (\dx \Ld^{(N)})^2 -i \dx^2 \Ld^{(N)} \big) \wt W.
\label{wtfR} 
\end{aligned}
\end{equation}

Set
\begin{align}
{\wt E}_s(t)
&:=
\sqrt{
\|\wt u(t)\|_{H^{s-2}}^2
+
\|\wt v(t)\|_{H^{s-2}}^2
+
\|\wt W(t)\|_{H^{s-2}}^2}.
\label{Ewt}
\end{align}
The following estimate plays a key role.

\begin{proposition}
\label{prop:diffNM}
Let $\phi \in H^s(\T)$ and $s_0 \in (\frac 52, s)$.
Assume that $F$ satisfies \eqref{Fbez}.
Then, there exists a constant $C( \| \phi \|_{H^s})$
such that
\[
\dt \wt E_s (t)
\le
C( \| \phi \|_{H^s})
\big(
\wt E_s (t)
+
M^{- (s-s_0)} \big)
\]
for $t \in (0,T)$ and $N,M \in \N$ with $N>M$.
\end{proposition}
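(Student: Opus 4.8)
The plan is to run the energy estimate for the difference system $(\wt u,\wt v,\wt W)$ governed by \eqref{NLSdW}, in close parallel with the proof of Proposition \ref{prop:diffHs-1}, and to isolate the one place where the Bona--Smith gain $M^{-(s-s_0)}$ is actually produced. For the equations of $\wt u$ and $\wt v$ the right-hand sides $F^{(N)}-F^{(M)}$ and $R_1^{(N)}-R_1^{(M)}$ are polynomial differences in which every summand carries a factor $\wt u$, $\wt v$ or $\wt w$ against coefficients uniformly bounded in $H^{s-1}$ (because $\|u_\l\|_{L_T^\infty H^s}\le C(\|\phi\|_{H^s})$ for $\l\in\{N,M\}$ by Corollary \ref{cor:enebdur}); since $w_\l=\dx^2u_\l$ enters only through $\|w_\l\|_{H^{s-2}}=\|u_\l\|_{H^s}$, Proposition \ref{prop:bili2} gives $\dt\|\wt u\|_{H^{s-2}}^2+\dt\|\wt v\|_{H^{s-2}}^2\le C(\|\phi\|_{H^s})\wt E_s^2$ with no source term. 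In the $\wt W$-equation the self-interaction terms $(P_0F_\be^{(N)})\dx\wt W$, $F_{\cj\be}^{(N)}e^{-\Ld^{(N)}+\cj{\Ld^{(N)}}}\cj{\dx\wt W}$ and $\wt\fR$ are treated verbatim as $\I_2$, $\I_4$ and $\I_7$ in Proposition \ref{prop:diffHs-1}: the first vanishes after integration by parts because $\Im P_0F_\be^{(N)}=0$, which is \eqref{Fbez}, cf. \eqref{est3-1}; the second is handled by the commutator Proposition \ref{prop:Com1} together with one integration by parts as in \eqref{est3-3}; and $\wt\fR$, each of whose terms contains a factor $\wt u$ or $\wt W$, is estimated as in \eqref{est3-7}. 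Each of these contributes only $C(\|\phi\|_{H^s})\wt E_s^2$.

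The genuinely new contributions are the two source terms $e^{-\Ld^{(N)}}(F_\be^{(N)}-F_\be^{(M)})\dx w_M$ and $e^{-\Ld^{(N)}}(F_{\cj\be}^{(N)}-F_{\cj\be}^{(M)})\cj{\dx w_M}$; I treat the first, the second being identical. The inputs are three uniform bounds. By Remark \ref{rem:pers} and Lemma \ref{lem:BoSm} one has $\|u_M\|_{L_T^\infty H^\rho}\le C(\|\phi\|_{H^s})M^{(\rho-s)_+}$ for $\rho\ge s_0-1$, so in particular $\|\dx w_M\|_{H^{s-2}}=\|u_M\|_{H^{s+1}}\les M$. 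The low-regularity smallness of the difference comes from applying Corollary \ref{cor:diffHs-1s} at the level $s_0$ (both $u_N,u_M$ lie in $C([0,T];H^{s+1})\subset C([0,T];H^{s_0})$ with $H^{s_0}$-norms bounded by $C(\|\phi\|_{H^s})$) and Lemma \ref{lem:BoSm}(iv): with $\delta:=M^{-(s-s_0+1)}$ this gives $\|\wt u\|_{L_T^\infty H^{s_0-1}}\le C(\|\phi\|_{H^s})\,\delta$. Finally, since $\wt E_s$ controls $\|\wt u\|_{H^\sigma}\le C(\|\phi\|_{H^s})\wt E_s$ for all $\sigma\le s$ (through $\|\wt w\|_{H^{s-2}}\les\|\wt W\|_{H^{s-2}}$ and Lemma \ref{lem:Enebd1}), interpolation between $H^{s_0-1}$ and $H^s$ yields, with $\theta:=\tfrac{s-s_0}{s-s_0+1}$,
\[
\|\wt u\|_{H^{s-1}}\le C(\|\phi\|_{H^s})\,\delta^{1-\theta}\wt E_s^{\theta}=C(\|\phi\|_{H^s})\,M^{-1}\wt E_s^{\theta}.
\]

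The heart of the matter is the bound for $\|(F_\be^{(N)}-F_\be^{(M)})\dx w_M\|_{H^{s-2}}$ (the factor $e^{-\Ld^{(N)}}$ is a harmless multiplier, bounded in $H^{s-1}$ by Lemma \ref{lem:Enebd1}). Applying Proposition \ref{prop:bili2} with auxiliary exponent $s_0-2>\tfrac12$ splits it into a low$\,\times\,$high and a high$\,\times\,$low piece. For the first, $\|F_\be^{(N)}-F_\be^{(M)}\|_{H^{s_0-2}}\le C\|\wt u\|_{H^{s_0-1}}\le C\delta$, so it is $\les\delta\|\dx w_M\|_{H^{s-2}}\les\delta M=M^{-(s-s_0)}$; the full loss of $\dx w_M=\dx^3u_M$ is absorbed by the smooth solution at its top level $H^{s+1}$, and $\delta M$ is exactly $M^{-(s-s_0)}$. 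For the second, $\|F_\be^{(N)}-F_\be^{(M)}\|_{H^{s-2}}\|\dx w_M\|_{H^{s_0-2}}\le C\|\wt u\|_{H^{s-1}}\|u_M\|_{H^{s_0+1}}$; when $s_0\le s-1$ the second factor is uniformly bounded, giving $\le C\wt E_s$, while when $s_0>s-1$ the interpolation bound and $\|u_M\|_{H^{s_0+1}}\les M^{s_0+1-s}$ give $\le C(M^{-1}\wt E_s^{\theta})M^{s_0+1-s}=C\wt E_s^{\theta}M^{-(s-s_0)}$. Thus
\[
\big\|(F_\be^{(N)}-F_\be^{(M)})\dx w_M\big\|_{H^{s-2}}\le C(\|\phi\|_{H^s})\big(M^{-(s-s_0)}+\wt E_s^{\theta}M^{-(s-s_0)}+\wt E_s\big),
\]
and pairing with $\wt W$ bounds the source contribution to $\tfrac12\dt\|\wt W\|_{H^{s-2}}^2$ by $C(M^{-(s-s_0)}+\wt E_s^{\theta}M^{-(s-s_0)}+\wt E_s)\wt E_s$. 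Collecting with the $C\wt E_s^2$ bounds from the remaining terms gives $\tfrac12\dt\wt E_s^2\le C(\wt E_s+\wt E_s^{\theta}M^{-(s-s_0)}+M^{-(s-s_0)})\wt E_s$; dividing by $\wt E_s$ and disposing of the fractional power by Young's inequality, $\wt E_s^{\theta}M^{-(s-s_0)}\le\theta\wt E_s+(1-\theta)M^{-(s-s_0)/(1-\theta)}\le C(\wt E_s+M^{-(s-s_0)})$ (the last step using $1-\theta=(s-s_0+1)^{-1}$, so the exponent is $M^{-(s-s_0)(s-s_0+1)}\le M^{-(s-s_0)}$), yields the asserted inequality. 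I expect the main obstacle to be exactly this source term: $\dx w_M$ carries one derivative more than the energy level and $\|u_M\|_{H^{s+1}}$ grows like $M$, so for $s\le\tfrac72$ no single use of Proposition \ref{prop:bili2} (nor the negative-index Proposition \ref{prop:bili}) controls the high$\,\times\,$low piece; the interpolation of the difference between the small level $s_0-1$ and the energy level $s$, followed by Young's inequality, is the device that closes the estimate.
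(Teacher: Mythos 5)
Your proposal is correct, and its overall skeleton --- the energy estimates for $\wt u$, $\wt v$, the gauge-transformed equation \eqref{NLSdW} for $\wt W$, the vanishing of the $(P_0 F_\be^{(N)})\dx\wt W$ pairing via \eqref{Fbez}, and the treatment of the $\cj{\dx\wt W}$-term and of $\wt\fR$ --- coincides with the paper's proof. Where you genuinely diverge is in the crucial source term $e^{-\Ld^{(N)}}(F_\be^{(N)}-F_\be^{(M)})\dx w_M$ (and its conjugate). The paper splits this pairing into a commutator piece and a remainder, as in \eqref{est3a'}: the commutator piece is controlled by Proposition \ref{prop:Com1}, which gains one derivative so that only $\|w_M\|_{H^{s-2}}\les\|u_M\|_{H^s}\le C(\|\phi\|_{H^s})$ appears and the contribution is $C\wt E_s\|\wt W\|_{H^{s-2}}$; the remainder \eqref{est3b'} is estimated by H\"older, using the $L^\infty$-smallness $\|F_\be^{(N)}-F_\be^{(M)}\|_{L^\infty}\les\|\phi_N-\phi_M\|_{H^{s_0-1}}\les M^{-(s-s_0+1)}$ (Corollary \ref{cor:diffHs-1s} applied at level $s_0$, then Lemma \ref{lem:BoSm}) against $\|w_M\|_{H^{s-1}}\les M$, which produces the $M^{-(s-s_0)}$ term. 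You instead estimate the product directly in $H^{s-2}$ via Proposition \ref{prop:bili2} with the asymmetric exponent pair $(s_0-2,s-2)$: your low$\times$high piece reproduces the paper's $M^{-(s-s_0+1)}\cdot M=M^{-(s-s_0)}$ gain from the same two inputs, but your high$\times$low piece carries $\|\dx w_M\|_{H^{s_0-2}}\sim\|u_M\|_{H^{s_0+1}}$, which grows like $M^{s_0+1-s}$ when $s_0>s-1$, and you repair this with interpolation of $\wt u$ between $H^{s_0-1}$ and $H^s$ followed by Young's inequality (the exponent bookkeeping $\delta^{1-\theta}=M^{-1}$ and $M^{-(s-s_0)(s-s_0+1)}\le M^{-(s-s_0)}$ is right). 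Both routes are valid and rest on the same external ingredients (Corollary \ref{cor:diffHs-1s}, Lemma \ref{lem:BoSm}, Remark \ref{rem:pers}); the paper's commutator decomposition is cleaner in that it needs no case analysis on $s_0$ versus $s-1$, no fractional powers of $\wt E_s$, and no Young step, precisely because the commutator absorbs the extra derivative on $w_M$ in the high$\times$low interaction, whereas your argument trades Proposition \ref{prop:Com1} for the more elementary bilinear estimate at the cost of the interpolation device.
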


\begin{proof}
For simplicity, we suppress the time dependence in this proof.
%It follows from \eqref{est:solbd1} that
%\[
%\sup_{N \in \N} \| F^{(N)} \|_{L_T^\infty H^{s-1}}, \quad 
%\sup_{N \in \N}\| F^{(N)}_\al \|_{L_T^\infty H^{s-1}}, \quad \dots
%\]
%are bounded by $C(\| \phi \|_{H^s})$.
%
From Proposition \ref{prop:bili} and \eqref{est:solbd1},
we have 
\begin{align}
\dt \| \wt u \|_{H^{s-2}}^2
&\le
2
\|F^{(N)} - F^{(M)}\|_{H^{s-2}}\|\wt u\|_{H^{s-2}}
\notag
\\
&\le
C(\| \phi \|_{H^s})
\wt E_s \cdot \|\wt u\|_{H^{s-2}},
\label{est4u}
\\
\dt \| \wt v \|_{H^{s-2}}^2
&\le
2 \| R_1^{(N)} - R_2^{(M)} \|_{H^{s-2}}
\|\wt v\|_{H^{s-2}}
\notag
\\
&\le
C(\| \phi \|_{H^s})
\wt E_s \cdot 
\|\wt v\|_{H^{s-2}}.
\label{est4v}
\end{align}
It follows from \eqref{NLSdW} that
\begin{equation}
\begin{aligned}
\frac 12 \dt \| \wt W \|_{H^{s-2}}^2
%&= \Re ( \dt \wt W, \wt W)_{H^{s-2}}
&= \Re ( \jb{\dx}^{s-3} \dt \wt W, \jb{\dx}^{s-1} \wt W)_{L^2}
\\
&=
\Re \big( (P_0 F_{\be}^{(N)}) \dx \wt W , \wt W \big)_{H^{s-2}}
\\
&\quad
+ 
\Re \big( F_{\cj \be}^{(N)} e^{-\Ld^{(N)} + \cj{\Ld^{(N)}}} \cj {\dx \wt W}, \wt W \big)_{H^{s-2}}
\\
&\quad
+
\Re \big( e^{-\Ld^{(N)}}(F_{\be}^{(N)} - F_{\be}^{(M)}) \dx w_M , \wt W \big)_{H^{s-2}}
\\
&\quad
+
\Re \big(e^{-\Ld^{(N)}} (F_{\cj \be}^{(N)} - F_{\cj \be}^{(M)}) \cj{\dx w_M} , \wt W \big)_{H^{s-2}}
\\
&\quad
+
\Re (\wt \fR ,\wt W)_{H^{s-2}}
\\
&=:\sum_{j=1}^5 \I_j .
\end{aligned}
\label{enwtW}
\end{equation}

From the assumption \eqref{Fbez},
the same calculation as in \eqref{est1a} shows that
\begin{equation}
\I_1
=
\Re P_0 F_\be^{(N)}
\cdot \frac 12
\int_\T \dx \big( | \jb{\dx}^{s-2} \wt W|^2 \big) dx
=
0.
\label{est4-1}
\end{equation}
The same calculation as in \eqref{est31} (or \eqref{est3-3}) yields that
\begin{equation}
|\I_2|
\le
C(\| \phi \|_{H^s})
\wt E_s \| \wt W\|_{H^{s-2}}.
\label{est4-2}
\end{equation}

We write $\I_3$ as follows:
\begin{equation}
\begin{aligned}
\I_3
&=
\Re \big( \big[ \jb{\dx}^{s-2}, e^{-\Ld^{(N)}}(F_{\be}^{(N)} - F_{\be}^{(M)}) \big] \dx w_M, \jb{\dx}^{s-2} \wt W \big)_{L^2}
\\
&\quad
+
\Re \big( e^{-\Ld^{(N)}}(F_{\be}^{(N)} - F_{\be}^{(M)}) \jb{\dx}^{s-2} \dx w_M, \jb{\dx}^{s-2} \wt W \big)_{L^2}
\\
&=: \I_{3,1} + \I_{3,2} .
\label{est3a'}
\end{aligned}
\end{equation}
By Propositions \ref{prop:bili2} and \ref{prop:Com1},
we obtain
\begin{equation}
\begin{aligned}
|\I_{3,1}| 
&\les
\big\| \big[ \jb{\dx}^{s-2},e^{-\Ld^{(N)}}(F_{\be}^{(N)} - F_{\be}^{(M)}) \big] \dx w_M \big\|_{L^2} \| \jb{\dx}^{s-2} \wt W \|_{L^2}
\\
&\les
\big\| e^{-\Ld^{(N)}}(F_{\be}^{(N)} - F_{\be}^{(M)}) \big\|_{H^{s-1}} \|w_M\|_{H^{s-2}} \|\wt W\|_{H^{s-2}}
\\
&\le
C(\| \phi \|_{H^s})
\wt E_s \|\wt W\|_{H^{s-2}}.
\label{est4-31}
\end{aligned}
\end{equation}
H\"older' inequality yields that
\begin{equation}
\begin{aligned}
|\I_{3,2} |
&\les
\big\| e^{-\Ld^{(N)}}(F_{\be}^{(N)} - F_{\be}^{(M)}) \big\|_{L^\infty}
\| w_M \|_{H^{s-1}} \|\wt W\|_{H^{s-2}}.
\end{aligned}
\label{est3b'}
\end{equation}
It follows from $s_0 \in (\frac 52, s)$ and Corollary \ref{cor:diffHs-1s} that
\begin{equation}
\begin{aligned}
\big\| e^{-\Ld^{(N)}}(F_{\be}^{(N)} - F_{\be}^{(M)}) \big\|_{L^\infty}
&
\les
\big\| e^{-\Ld^{(N)}} \big\|_{L^\infty} \| F_{\be}^{(N)} - F_{\be}^{(M)} \|_{L^\infty}
\\
&\le
C(\| \phi \|_{H^s})
(\| \wt u \|_{L^\infty} + \| \dx \wt u \|_{L^\infty})
\\
&\le
C(\| \phi \|_{H^s})
\| \wt u \|_{H^{s_0-1}}
\\
&\le
C(\| \phi \|_{H^s})
\| \phi_N - \phi_M \|_{H^{s_0-1}}.
\end{aligned}
\label{est3c'}
\end{equation}
From Remark \ref{rem:pers},
we have
\begin{equation}
\|w_M\|_{H^{s-1}}
\le
\| u_M \|_{H^{s+1}}
\le
C( \| \phi \|_{H^s})
(1+\| \phi_M \|_{H^{s+1}})
.
\label{est3h'}
\end{equation}
Combining \eqref{est3b'}--\eqref{est3h'},
we obtain that
\[
|\I_{3,2} |
\le
C(\| \phi \|_{H^s})
\|\phi_N - \phi_M\|_{H^{s_0-1}} (1+\|\phi_M\|_{H^{s+1}})
\|\wt W\|_{H^{s-2}}.
\]
Lemma \ref{lem:BoSm} with $N>M$ yields that
\begin{equation}
\begin{aligned}
|\I_{3,2} |
&\le
C(\| \phi \|_{H^s})
M^{-(s-s_0)}
\|\wt W\|_{H^{s-2}}.
\label{est4-32}
\end{aligned}
\end{equation}
With \eqref{est3a'}, \eqref{est4-31}, and \eqref{est4-32},
we obtain that
\begin{equation}
|\I_3|
\le
C(\| \phi \|_{H^s})
\big( \wt E_s + M^{-(s-s_0)} \big)
\|\wt W\|_{H^{s-2}}.
\label{est4-32bb}
\end{equation}

The same calculation as in \eqref{est4-32bb} yields that
\begin{equation}
\begin{aligned}
|\I_4|
\le
C(\| \phi \|_{H^s})
\big(\wt E_s + M^{-(s-s_0)}\big)
\|\wt W\|_{H^{s-2}}
.
\end{aligned}
\label{est4-4}
\end{equation}
By \eqref{wtfR},
$\wt \fR$ contains at least one of $\wt u, \wt v$, and $\wt W$.
Hence,
we have 
\begin{equation}
|\I_5|
\les
\| \wt \fR \|_{H^{s-2}} \|\wt W\|_{H^{s-2}}
\le
C(\| \phi \|_{H^s})
\wt E_s
\|\wt W\|_{H^{s-2}}.
\label{est4-5}
\end{equation}

It follows from \eqref{enwtW}--\eqref{est4-2} and \eqref{est4-32bb}--\eqref{est4-5} that
\begin{equation}
\dt \| \wt W \|_{H^{s-2}}^2
\le
C(\| \phi \|_{H^s})
\big(\wt E_s
+ M^{-(s-s_0)} \big) \|\wt W\|_{H^{s-2}}.
\label{estwtW}
\end{equation}
From \eqref{Ewt}--\eqref{est4v} and \eqref{estwtW},
we obtain desired bound.
\end{proof}

Proposition \ref{prop:diffNM} yields that
\[
\wt E_s(t)
\le
\big(\wt E_s(0) +M^{- (s-s_0)}\big) e^{C(\| \phi \|_{H^s}) t}.
\]
By a similar calculation as in \eqref{diffbd1},
we have
\begin{equation}
\| u_N - u_M \|_{L_T^\infty H^s}
\le
C(\| \phi \|_{H^s})
\big( \| \phi_N - \phi_M \|_{H^s} + M^{-(s-s_0)} \big)
\label{BSdiffa}
\end{equation}
for $N>M$.
Hence,
$\{ u_N \}_{N \in \N}$ converges to $u$ constructed at the end of Subsection \ref{SUBSEC:energy} in $C([0,T]; H^s(\T))$.
In particular, we have $u \in C([0,T]; H^s(\T))$.

Finally, we prove the well-posedness result in Theorem \ref{thm:equiv}.

\begin{proof}[Proof of the well-posedness for \eqref{NLS}]
Let $\{ \phi^{(\l)} \}_{\l \in \N} \subset H^s(\T)$ converge to $\phi$ in $H^s(\T)$.
Moreover, let $\phi^{(\l)}_N$ be the truncation of $\phi^{(\l)}$ defined in \eqref{BS_ini1}.
Set $u^{(\l)}$ and $u^{(\l)}_N$ as the solutions to \eqref{NLS} with the initial data  $\phi^{(\l)}$ and $\phi^{(\l)}_N$, respectively.

The triangle inequality and \eqref{BSdiffa} yield that
\begin{align*}
\|u^{(\l)} - u\|_{L_T^\infty H^s}
&\le
\|u^{(\l)} - u^{(\l)}_N \|_{L_T^\infty H^s}
+ \|u^{(\l)}_N - u _N \|_{L_T^\infty H^s}
\\
&\quad
+ \|u_N - u\|_{L_T^\infty H^s}
\\
&\le
C(\| \phi \|_{H^s})
\big(
\| \phi^{(\l)} - \phi^{(\l)}_N \|_{H^s}
+ \| \phi_N - \phi \|_{H^s}
+ N^{-(s-s_0)} \big)
\\
&\quad
+ \|u^{(\l)}_N - u _N \|_{L_T^\infty H^s}.
\end{align*}
Since $\phi_N$ and $\phi_N^{(\l)}$ are the truncation of $\phi$ and $\phi^{(\l)}$, respectively,
we have
\[
\| \phi^{(\l)} - \phi^{(\l)}_N \|_{H^s}
\le
2 \| \phi^{(\l)} - \phi \|_{H^s}
+
\| \phi_N - \phi \|_{H^s}.
\]
Accordingly, it holds that
\begin{equation}
\begin{aligned}
&\|u^{(\l)} - u\|_{L_T^\infty H^s}
\\
&\le
C(\| \phi \|_{H^s})
\big(
\| \phi^{(\l)} - \phi \|_{H^s}
+ \| \phi_N - \phi \|_{H^s}
+ N^{-(s-s_0)} \big)
\\
&\quad
+ \|u^{(\l)}_N - u _N \|_{L_T^\infty H^s}.
\end{aligned}
\label{convNN0}
\end{equation}
Once we have
\begin{equation}
\lim_{\l \to \infty} u^{(\l)}_N = u_N
\label{convNN}
\end{equation}
in $C([0,T]; H^s(\T))$ for $N \in \N$,
the desired convergence
\[
\lim_{\l \to \infty} u^{(\l)} = u
\]
in $C([0,T];H^s(\T))$
follows from \eqref{convNN0} and \eqref{convNN}.

We prove \eqref{convNN}.
From Remark \ref{rem:pers}, we have
\begin{align*}
&\| u_N^{(\l)} \|_{L_T^\infty H^{s+1}}
+ \| u_N \|_{L_T^\infty H^{s+1}}
\\
&\le C( \|\phi_N \|_{H^s}) (1+ \| \phi_N^{(\l)} \|_{H^{s+1}}+ \| \phi_N \|_{H^{s+1}})
\\
&\le C( \|\phi \|_{H^s}) N.
\end{align*}
Hence, the same calculation as in the proof of Proposition \ref{prop:diffNM} implies that
\begin{align*}
&\| u_N^{(\l)} - u_N  \|_{L_T^\infty H^s}
\\
&\le
C( \|\phi \|_{H^s})
\Big(
\| \phi_N^{(\l)} - \phi_N  \|_{H^s}
+ N \| \phi_N^{(\l)} - \phi_N  \|_{H^{s_0-1}}
\Big)
e^{C( \|\phi \|_{H^s}) T},
\end{align*}
which yields \eqref{convNN}.
This concludes the proof.
\end{proof}

\section{Non-existence}
\label{Sec:NE}

In this section, we prove that \eqref{NLS} has no solution if \eqref{Fbez2} holds.
The main result in this section is the following.

\begin{theorem}
\label{thm:NE2}
Let $s> \frac 52$ and $T>0$.
Assume that
$u \in C([0,T]; H^s(\T))$ is a solution to \eqref{NLS}.
%We have the following:

\begin{enumerate}
\item
If
\[
\int_\T \Im F_\be ( \phi,\dx \phi,\cj{\phi},\cj{\dx \phi} ) dx >0,
\]
then we have
$P_- \phi \in H^{s+\dl}(\T)$
for any  $\dl \in (0,\frac 12)$.

\item
If
\[
\int_\T \Im F_\be ( \phi,\dx \phi,\cj{\phi},\cj{\dx \phi} ) dx <0,
\]
then we have
$P_+ \phi \in H^{s+\dl}(\T)$
for any  $\dl \in (0,\frac 12)$.
\end{enumerate}

%A similar conclusion holds for a solution $u \in C([-T,0]; H^s(\T))$.
The same conclusion holds for a solution $u \in C([-T,0]; H^s(\T))$ by interchanging $P_- \phi$ and $P_+ \phi$.
\end{theorem}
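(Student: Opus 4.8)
The plan is to prove part (i) for a forward-in-time solution; part (ii) and the assertions for solutions on $[-T,0]$ then follow by combining the time-reversal/conjugation $u(t,x)\mapsto\cj{u(-t,x)}$ used at the start of Section \ref{Sec:WP}, under which $\int_\T\Im F_\be$ changes sign and $P_+,P_-$ are interchanged. So assume $c_I(0):=\int_\T\Im F_\be(\phi,\dx\phi,\cj\phi,\cj{\dx\phi})\,dx>0$. Since $u\in C([0,T];H^s)$, the map $t\mapsto c_I(t):=\int_\T\Im F_\be(u(t),\dx u(t),\cj{u(t)},\cj{\dx u(t)})\,dx$ is continuous, so there are $\kappa>0$ and $T'\in(0,T]$ with $c_I(t)\ge\kappa$ on $[0,T']$; I work on $[0,T']$.

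First I would pass to the gauge variable $W:=e^{-\Ld}\dx^2u$ with $\Ld:=-\frac i2\dx^{-1}F_\be$, exactly as in Subsection \ref{SUBSEC:energy} but with $\eps=0$, so that $W\in C([0,T'];H^{s-2})$ solves
\[
\dt W+i\dx^2W=(P_0F_\be)\,\dx W+F_{\cj\be}e^{-\Ld+\cj\Ld}\,\cj{\dx W}+\fR,
\]
where $\fR$ is a polynomial in $u,v,W$, their conjugates and $e^{\pm\Ld}$ containing \emph{no} derivative of $W$. Writing $P_0F_\be=c_R(t)+ic_I(t)$, on the Fourier side each mode obeys $\dt\ft W(t,k)=(ik^2+ic_R(t)k-c_I(t)k)\ft W(t,k)+\ft G(t,k)$ with $G:=F_{\cj\be}e^{-\Ld+\cj\Ld}\cj{\dx W}+\fR$; only the real part $-c_I(t)k$ of the coefficient affects magnitudes, the real transport $c_R\dx W$ entering purely through the phase. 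Putting $\g(\tau,k):=\exp\big(-ik^2\tau-ik\!\int_0^\tau\! c_R+k\!\int_0^\tau\! c_I\big)$, one has $|\g(\tau,k)|=e^{k\int_0^\tau c_I}\le e^{-\kappa|k|\tau}$ precisely for $k<0$, and solving backward from a fixed $t\in(0,T']$ gives, for $k<0$,
\[
\ft W(0,k)=\g(t,k)\,\ft W(t,k)-\int_0^t\g(\tau,k)\,\ft G(\tau,k)\,d\tau .
\]

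The first term is harmless: $|\g(t,k)|\le e^{-\kappa|k|t}$ decays faster than any polynomial, so with $W(t)\in H^{s-2}$ it contributes a function in $H^\infty$ on $\{k<0\}$. The entire gain for $P_-W(0)$ therefore rests on the Duhamel integral, and this is \textbf{the main obstacle}. The decay $e^{-\kappa|k|\tau}$ alone buys exactly one derivative, which merely compensates the derivative lost in the worst term $\cj{\dx W}$ and recovers $W(0)\in H^{s-2}$; to gain the extra $\dl<\frac12$ I must exploit the leading phase $e^{-ik^2\tau}$. Here I would follow the time integration by parts of \cite{KiTs18}: for $k<0$ the factor $\cj{\dx W}$ at frequency $k$ is built from $\ft W(\tau,-k)$ with $-k>0$, whose linear evolution carries the phase $e^{+ik^2\tau}$, so after conjugation the total phase in the integral is of the non-stationary type $e^{-2ik^2\tau}$. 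Integrating by parts in $\tau$ against this phase gains $k^{-2}$ at the cost of $\dt(e^{k\int c_I}\ft G)$, and combining this smallness with the exponential decay and the $L^2_\tau$ structure yields the gain $\dl<\frac12$; the derivative-free remainder $\fR$ is controlled by the decay alone. The delicate point to check is precisely that the dispersive effect does not vanish, i.e. the phase does not degenerate, for this derivative interaction; the time regularity needed for the integration by parts is supplied by the equation via $\dt W=-i\dx^2W+G$. This step establishes $P_-W(0)\in H^{s-2+\dl}$.

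Finally I would transfer the gain back to $\phi$. Since $W(0)=e^{-\Ld(0)}\dx^2\phi$ with $e^{\pm\Ld(0)}\in H^s$ ($s>\tfrac12$), it suffices to bound $P_-\big(e^{\Ld(0)}W(0)\big)$ in $H^{s-2+\dl}$. Splitting $W(0)=P_-W(0)+P_+W(0)$, the contribution of $P_-W(0)\in H^{s-2+\dl}$ is handled by Proposition \ref{prop:bili2}, while the contribution of the merely $H^{s-2}$ part $P_+W(0)$ has exactly the form $P_-(f\,P_+g)$ and is absorbed by Lemma \ref{lem:bili3}, which gives $\big\|P_-\big(e^{\Ld(0)}P_+W(0)\big)\big\|_{H^{s-2+\dl}}\les\big\|e^{\Ld(0)}\big\|_{H^{s-2+\dl}}\|W(0)\|_{H^r}$ for any $r\in(\tfrac12,s-2)$, finite because $s>\tfrac52$. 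Hence $P_-\dx^2\phi\in H^{s-2+\dl}$, that is $P_-\phi\in H^{s+\dl}$, as claimed.
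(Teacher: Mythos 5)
Your treatment of case (i) for a forward-in-time solution is essentially the paper's own proof: the same gauge transformation $W=e^{-\Ld}\dx^2u$, the same passage to a Fourier-side ODE with an integrating factor combining the dispersive phase, the transport phase from $\Re P_0F_\be$, and the damping $e^{k\int_0^t\Im P_0 F_\be}$; the same use of the sign condition to get exponential decay for $k<0$; the same \cite{KiTs18}-style integration by parts in time against the non-degenerate phase $k^2+k_2^2\sim 2k^2$ for the $\cj{\dx W}$ term; and the same transfer back to $\phi$ via Proposition \ref{prop:bili2} and Lemma \ref{lem:bili3}. (One small omission: the paper first splits the derivative term according to $|k_1|\ge \frac{|k_2|}{2}$ versus $|k_1|<\frac{|k_2|}{2}$; only the second regime needs the oscillation, the first being absorbed by the $H^{s-1}$ smoothness of the coefficient. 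Your sketch implicitly treats only the second regime, but the first is routine.)

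There is, however, a genuine error in your reduction of case (ii) and of the backward-in-time statements. Under $u(t,x)\mapsto \cj{u(-t,x)}$ the new nonlinearity is $G(\al,\be,\cj\al,\cj\be)=-\cj{F(\cj\al,\cj\be,\al,\be)}$, hence $G_\be=-\cj{F_\be(\cj\al,\cj\be,\al,\be)}$; since $\Im(-\cj z)=\Im z$, the transformed data $\cj\phi$ satisfies
\[
\int_\T \Im G_\be\big(\cj\phi,\cj{\dx\phi},\phi,\dx\phi\big)\,dx
=\int_\T \Im F_\be\big(\phi,\dx\phi,\cj\phi,\cj{\dx\phi}\big)\,dx ,
\]
so the sign of the integral is \emph{preserved}, not flipped, while time is reversed and $P_\pm$ are interchanged (because $P_+\cj\phi=\cj{P_-\phi}$). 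Consequently this transformation does derive the $[-T,0]$ assertions from the $[0,T]$ ones, but it cannot convert (ii) into (i); indeed, if the sign did flip as you claim, applying the backward version of (i) to the transformed solution would yield $P_+\cj\phi=\cj{P_-\phi}\in H^{s+\dl}$, i.e.\ that $P_-\phi$ gains regularity in case (ii), contradicting the statement, which asserts the gain for $P_+\phi$. To obtain (ii) you should instead use the spatial reflection $u(t,x)\mapsto u(t,-x)$, whose nonlinearity $F(\al,-\be,\cj\al,-\cj\be)$ does flip the sign of $\int_\T\Im F_\be$ while interchanging $P_\pm$ and keeping the time direction, or simply rerun your argument for $k>0$ with $c_I\le-\kappa$ — which is what the paper means by a ``straightforward modification.''
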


Once Theorem \ref{thm:NE2} is obtained,
Theorem \ref{thm:nonexi}
follows from choosing $\phi \in H^s(\T)$ with $P_\pm \phi \notin H^{s+\dl}(\T)$ and
\[
\int_\T \Im F_\be ( \phi,\dx \phi,\cj{\phi},\cj{\dx \phi} ) dx \neq 0.
\]
By using an approximation argument, we can choose such initial data when \eqref{Fbez2} holds.

\begin{lemma}
\label{lem:NIini1}
Let $s>\frac 52$ and $\psi \in H^s(\T)$ satisfy \eqref{Fbez2}.
Then,
for $\dl \in (0,\frac 12)$,
there exists $\phi \in H^s(\T)$ such that the followings:
\begin{enumerate}
\item
$P_+ \phi \notin H^{s+\dl}(\T)$ and 
$P_- \phi \notin H^{s+\dl}(\T)$;

\item
%$\int_\T \Im F_\be ( \phi,\dx \phi,\cj{\phi},\cj{\dx \phi} ) dx
%\int_\T \Im F_\be ( \psi,\dx \psi,\cj{\psi},\cj{\dx \psi} ) dx
%>0$,
%i.e.,
%they have the same sign.
$\int_\T \Im F_\be ( \phi,\dx \phi,\cj{\phi},\cj{\dx \phi} ) dx$
and $\int_\T \Im F_\be ( \psi,\dx \psi,\cj{\psi},\cj{\dx \psi} ) dx$
have the same sign.
\end{enumerate}
\end{lemma}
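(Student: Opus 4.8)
The plan is to produce $\phi$ as a small but deliberately rough perturbation of a smooth truncation of $\psi$. The mechanism is continuity of the functional
\[
\mathcal I(f) := \int_\T \Im F_\be(f,\dx f, \cj f, \cj{\dx f})\, dx
\]
on $H^s(\T)$ for $s>\tfrac52$. Since $F_\be$ is a polynomial, $F_\be(f,\dx f,\cj f,\cj{\dx f})$ is a finite sum of monomials in the factors $f,\dx f,\cj f,\cj{\dx f}$, each lying in $H^{s-1}(\T)$. As $s-1>\tfrac32>\tfrac12$, Proposition \ref{prop:bili2} (with both exponents equal to $s-1$) makes $H^{s-1}(\T)$ a Banach algebra; telescoping the monomial differences then shows that $f\mapsto F_\be(f,\dx f,\cj f,\cj{\dx f})$ is locally Lipschitz from $H^s(\T)$ into $H^{s-1}(\T)$, where I use $\|\dx(f-g)\|_{H^{s-1}}\le\|f-g\|_{H^s}$. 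Composing with the mean-value map $g\mapsto\ft g(0)$, which satisfies $|\ft g(0)|\le\|g\|_{L^2}\le\|g\|_{H^{s-1}}$, and with $\Im$, yields that $\mathcal I\colon H^s(\T)\to\R$ is locally Lipschitz, hence continuous.

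First I would smooth $\psi$. Let $\psi_N$ be the truncation from \eqref{BS_ini1}; by Lemma \ref{lem:BoSm} we have $\psi_N\in C^\infty(\T)$ and $\psi_N\to\psi$ in $H^s(\T)$. Setting $\eta:=\tfrac12|\mathcal I(\psi)|>0$, which is positive by \eqref{Fbez2}, the continuity just established lets me fix $N$ so large that $|\mathcal I(\psi_N)-\mathcal I(\psi)|<\tfrac12\eta$; in particular $\mathcal I(\psi_N)$ has the same sign as $\mathcal I(\psi)$ and $|\mathcal I(\psi_N)|\ge\eta$. Crucially, $\psi_N$ is a trigonometric polynomial, so $P_+\psi_N$ and $P_-\psi_N$ are smooth; this is what rules out any cancellation of roughness in the step below.

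Next I would add a handmade rough term. For $\eps>0$ put
\[
h(x) := \eps \sum_{|k|\ge 2} \frac{1}{\jb{k}^{s+\frac12}\,\log\jb{k}}\, e^{ikx}.
\]
Since $\sum_{k\ge2}(k(\log k)^2)^{-1}<\infty$ we get $\|h\|_{H^s}\les\eps$, while for every $\dl\in(0,\tfrac12)$ the tail $\sum_{k\ge2}k^{2\dl-1}(\log k)^{-2}$ diverges, so both $P_+h\notin H^{s+\dl}(\T)$ and $P_-h\notin H^{s+\dl}(\T)$; the symmetric Fourier support in $\{k\ge2\}\cup\{k\le-2\}$ makes the two spectral halves rough at once, and indeed a single $h$ works for all admissible $\dl$ simultaneously. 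I then set $\phi:=\psi_N+h$. For (i): $P_\pm\phi=P_\pm\psi_N+P_\pm h$ with $P_\pm\psi_N$ smooth and $P_\pm h\notin H^{s+\dl}(\T)$, hence $P_\pm\phi\notin H^{s+\dl}(\T)$. For (ii): $\|\phi-\psi_N\|_{H^s}\les\eps$, so by continuity of $\mathcal I$ I shrink $\eps$ until $|\mathcal I(\phi)-\mathcal I(\psi_N)|<\eta$, which forces $\mathcal I(\phi)$ to share the sign of $\mathcal I(\psi_N)$, hence of $\mathcal I(\psi)$.

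The one point requiring care is the continuity (local Lipschitz bound) of $\mathcal I$ on $H^s(\T)$, resting on the algebra property of $H^{s-1}(\T)$ that is available precisely because $s>\tfrac52$; after that, everything reduces to the routine truncation-plus-perturbation assembly above. The only other thing to verify is that a single perturbation $h$ of arbitrarily small $H^s$-norm can be made simultaneously rough on both $P_+$ and $P_-$, which is exactly what the explicit symmetric construction guarantees, so that (i) and (ii) hold together.
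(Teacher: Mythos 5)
Your proposal is correct, and it reorganizes the argument in a way that is genuinely different from the paper's proof. Both proofs rest on the same two ingredients: continuity of the functional $\mathcal I(f) = \int_\T \Im F_\be(f,\dx f,\cj f,\cj{\dx f})\,dx$ on $H^s(\T)$ (the paper bounds the difference crudely by $C(\|\psi\|_{H^2})\|\psi^{(\l)}-\psi\|_{H^2}$, you via the algebra property of $H^{s-1}(\T)$ from Proposition \ref{prop:bili2} -- either suffices), and the addition of a small perturbation whose $P_\pm$ parts fail to lie in $H^{s+\dl}(\T)$. The difference is how the cancellation issue is handled: since the sum of two functions outside $H^{s+\dl}(\T)$ can land back inside $H^{s+\dl}(\T)$, one cannot blindly add roughness to $\psi$. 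The paper resolves this by a case analysis (Case 1: $\psi \in H^{s+\dl}$; Case 2-1: both $P_\pm\psi \notin H^{s+\dl}$, where $\phi=\psi$ works outright; Cases 2-2/2-3: only one half is smooth, and only that half gets roughened), with a lacunary perturbation $\frac1\l\sum_{k\in 2^\N}k^{-s-\dl}e^{\pm ikx}$ tuned to the given $\dl$. You instead first replace $\psi$ by its truncation $\psi_N$ from \eqref{BS_ini1}, which is a trigonometric polynomial by Lemma \ref{lem:BoSm}, so the added roughness can never be cancelled; then a single two-sided perturbation $h$ does the job, with no cases at all. Your route buys simplicity and uniformity: the log-weighted series $\sum\jb{k}^{-s-\frac12}(\log\jb k)^{-1}e^{ikx}$ lies in $H^s$ but its halves lie outside $H^{s+\dl}$ for \emph{every} $\dl\in(0,\frac12)$ simultaneously, whereas the paper's perturbation is $\dl$-specific. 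The paper's route keeps $\phi$ an arbitrarily small $H^s$-perturbation of $\psi$ itself (your $\phi$ is only close to $\psi_N$), a property the lemma does not require but which is conceptually tidier. All your quantitative claims check out: $\|h\|_{H^s}\les\eps$ since $\sum_{k\ge 2}k^{-1}(\log k)^{-2}<\infty$, and $\sum_{k\ge2}k^{2\dl-1}(\log k)^{-2}=\infty$ for $\dl>0$, so the sign bookkeeping with $\eta=\frac12|\mathcal I(\psi)|$ closes the argument.
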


\begin{proof}
We consider the two cases:
\begin{itemize}
\item
Case 1: $\psi \in H^{s+\dl} (\T)$;

\item
Case 2: $\psi \notin H^{s+\dl} (\T)$.
\end{itemize}

Case 1:
%$\psi \in H^{s+\dl} (\T)$,
We set
\[
\psi^{(\l)} (x) = \psi(x) + \frac 1\l \sum_{k \in 2^\N} k^{-s-\dl} (e^{i k x} + e^{-ikx})
\]
for $\l \in \N$.
It follows from $\psi \in H^{s+\dl}(\T)$ that
\begin{equation}
P_\pm \psi^{(\l)} = P_\pm (\psi^{(\l)} - \psi) + P_\pm \psi \notin H^{s+\dl}(\T)
\label{psiapp1}
\end{equation}
for any $\l \in \N$.

A simple calculation yields that
\begin{equation}
\| \psi^{(\l)} - \psi \|_{H^s}
= \bigg( \frac 2{\l^2}  \sum_{k \in 2^{\N}} k^{-2\dl} \bigg)^{\frac 12}
\les \frac 1\l
\label{psiapp2}
\end{equation}
for any $\l \in \N$.
Note that
\begin{align*}
&\bigg|
\int_\T \Im F_\be ( \psi^{(\l)},\dx \psi^{(\l)},\cj{\psi^{(\l)}},\cj{\dx \psi^{(\l)}} ) dx
- \int_\T \Im F_\be ( \psi,\dx \psi,\cj{\psi},\cj{\dx \psi} ) dx
\bigg|
\\
&\le
\big\| F_\be ( \psi^{(\l)},\dx \psi^{(\l)},\cj{\psi^{(\l)}},\cj{\dx \psi^{(\l)}} ) - F_\be ( \psi,\dx \psi,\cj{\psi},\cj{\dx \psi} ) \big\|_{L^2}
\\
&\le
C(\| \psi \|_{H^2})
\| \psi^{(\l)} - \psi \|_{H^2}.
\end{align*}
From \eqref{psiapp2} and $s>\frac 52$,
we obtain that
\[
\lim_{\l \to \infty} \int_\T \Im F_\be ( \psi^{(\l)},\dx \psi^{(\l)},\cj{\psi^{(\l)}},\cj{\dx \psi^{(\l)}} ) dx
= \int_\T \Im F_\be ( \psi,\dx \psi,\cj{\psi},\cj{\dx \psi} ) dx.
\]
With \eqref{psiapp1},
we can take $\phi = \psi^{(\l)}$ for some large $\l \in \N$.

Case 2:
We further consider the following three cases:

\begin{itemize}
\item
Case 2-1:
$P_+ \psi \notin H^{s+\dl}(\T)$ and $P_- \psi \notin H^{s+\dl}(\T)$;

\item
Case 2-2:
$P_+ \psi \in H^{s+\dl}(\T)$ and $P_- \psi \notin H^{s+\dl}(\T)$;

\item
Case 2-3:
$P_+ \psi \notin H^{s+\dl}(\T)$ and $P_- \psi \in H^{s+\dl}(\T)$.
\end{itemize}

Case 2-1:
We can take $\phi = \psi$.

Case 2-2:
We set
\[
\psi^{(\l)}_+ (x) = \psi(x) + \frac 1\l \sum_{k \in 2^\N} k^{-s-\dl} e^{i k x}
\]
for $\l \in \N$.
The same argument as in Case 1 shows that
we can take
$\phi = \psi^{(\l)}_+$ for some large $\l \in \N$.

Case 2-3:
This case is similarly handled as in Case 2-2.
\end{proof}

In the rest of this section,
we only consider a solution forward in time.
Namely, let $s>\frac 52$ and $u \in C([0,T]; H^s(\T))$ be a solution to \eqref{NLS}.

In Subsection \ref{SUBSEC:apriori1} below,
we prove an a priori estimate,
which plays a crucial role in the proof of Theorem \ref{thm:NE2}.
In Subsection \ref{subsec:nonex2},
we will prove Theorem \ref{thm:NE2}.

\subsection{An a priori estimate}
\label{SUBSEC:apriori1}

Similarly as in Subsection \ref{SUBSEC:energy},
we set
\[
v = \dx u, \quad
w = \dx v.
\]
For simplicity,
we write
\[
\Ta = F(u, v, \cj{u}, \cj{v}),
\quad
\Ta_\al = F_\al (u, v, \cj{u}, \cj{v}),
\quad
\dots.
\]
From \eqref{ueps}--\eqref{weps} with $\eps=0$,
$u$,$v$, and $w$ satisfy
\begin{align*}
&\dt u + i \dx^2 u
= \Ta,
%\label{NLS2a}
\\
&\dt v + i \dx^2 v
=
R_1,
%\label{NLS2b}
\\
&\dt w + i \dx^2 w
=
\Ta_{\be} \dx w
+
\Ta_{\cj \be} \cj{\dx w}
+
R_2,
%\label{NLS3aa}
\end{align*}
where $R_1$ and $R_2$ are polynomials in
$u,v,w, \cj{u}, \cj{v}, \cj{w}$.

Define
\begin{align*}
\Ld
&:=
-\frac {i}2 \dx^{-1} \Ta_\be,
&
W
&:=
e^{-\Ld} w.
\end{align*}
It follows from \eqref{Weps} with $\eps=0$ that
$W$ satisfies
\begin{equation}
\dt W + i \dx^2 W
=
(P_0 \Ta_\be) \dx W 
+
\Ta_{\cj \be} e^{-\Ld + \cj \Ld} \cj{\dx W}
+
\wt R,
\label{NLSW}
\end{equation}
where $\wt R$ is a polynomial in $u,v,W, \cj u, \cj v, \cj W, e^{\Ld}, e^{\cj \Ld}$.

Set
\begin{equation}
\ft \fw(t,k)
=
\exp \biggl(-i \bigg( k^2t + k \int^t_0 \Re P_0 \Ta_\be(t') dt' \bigg) \biggr) \ft W(t,k).
\label{fwga}
\end{equation}
With \eqref{NLSW},
$\ft \fw$ satisfies
\begin{equation}
\begin{aligned}
&\dt \ft \fw (t,k)
\\
&=
- (\Im P_0 \Ta_\be (t)) k \ft \fw (t,k)
\\
&\quad
+ \exp \biggl(-i \bigg( k^2t + k \int^t_0 \Re P_0 \Ta_\be(t') dt' \bigg) \biggr)
\Ft \big[ \Ta_{\cj \be} e^{-\Ld + \cj \Ld} \cj{\dx W} \big] (t, k)
\\
&\quad
+
\exp \biggl(-i \bigg( k^2t + k \int^t_0 \Re P_0 \Ta_\be(t') dt' \bigg) \biggr) \ft{\wt R}(t,k)
\\
&:=
- (\Im P_0 \Ta_\be (t)) k \ft \fw (t,k)
+
\Nl_1 (t,k)
+
\Nl_2 (t,k).
\label{ftfw1}
\end{aligned}
\end{equation}
We decompose $\Nl_1$ into two parts as follows:
\begin{equation}
\begin{aligned}
\Nl_1(t,k)
&=
i \sum_{k_1+k_2=k} e^{-i(k^2 + {k_2}^2)t} e^{-i k_1 \int^t_0 \Re P_0 \Ta_\be(t') dt'}
\\
&\hspace*{80pt}
\times
\Ft \big[ \Ta_{\cj \be} e^{-\Ld + \cj \Ld} \big] (t, k_1) k_2 \cj {\ft \fw (t,-k_2)}
\\
&=
i
\biggl(\sum_{\substack {k_1+k_2=k \\ |k_1| \ge \frac{|k_2|}2}}
+
\sum_{\substack {k_1+k_2=k \\ |k_1| < \frac{|k_2|}2}}
\biggr)
e^{-i(k^2 + {k_2}^2)t} e^{-i k_1 \int^t_0 \Re P_0 \Ta_\be(t') dt'}
\\
&\hspace*{100pt}
\times \Ft \big[ \Ta_{\cj \be} e^{-\Ld + \cj \Ld} \big] (t, k_1) k_2 \cj{\ft \fw (t,-k_2)}
\\
&=:
\Nl_{1,1} (t,k)
+
\Nl_{1,2} (t,k).
\end{aligned}
\label{decF1}
\end{equation}
We further decompose $\Nl_{1,2}$ into two parts:
\begin{equation}
\begin{aligned}
\Nl_{1,2}(t,k)
&= \dt \Ml(t,k) + \Kl(t,k),
\\
\Ml (t,k)
&:=
- \sum_{\substack{k_1 + k_2=k \\ |k_1| < \frac{|k_2|}2}}
\frac{e^{-i(k^2 + {k_2}^2)t}}{k^2 + k_2^2}
e^{-i k_1 \int^t_0 \Re P_0 \Ta_\be (t') dt'}
\\
&\hspace*{100pt}
\times
\Ft \big[ \Ta_{\cj \be} e^{-\Ld + \cj \Ld} \big] (t,k_1) k_2 \cj {\ft \fw (t,-k_2)},
\\
\Kl (t,k)
&:=
\sum_{\substack{k_1 + k_2=k \\ |k_1| < \frac{|k_2|}2}}
\frac{e^{-i(k^2 + {k_2}^2)t}}{k^2 + k_2^2}
\dt \biggl(e^{-i k_1 \int^t_0 \Re P_0 \Ta_\be (t') dt'}
\\
&\hspace*{100pt}
\times
\Ft \big[ \Ta_{\cj \be} e^{-\Ld + \cj \Ld} \big] (t,k_1) k_2 \cj {\ft \fw (t,-k_2)} \biggr).
\end{aligned}
\label{decF2}
\end{equation} 
From \eqref{ftfw1}--\eqref{decF2},
we have
\begin{equation}
\begin{aligned}
\dt \ft \fw (t,k)
&=
- (\Im P_0 \Ta_\be (t)) k \ft \fw (t,k)
+ 
\Nl_{1,1} (t,k)
\\
&\quad
+
\dt \Ml (t,k)
+
\Kl (t,k)
+
\Nl_2 (t,k).
\end{aligned}
\label{ftfw2}
\end{equation}
The following is the main estimate in this subsection.

\begin{proposition}
\label{prop:FGH}
Let $s>\frac52$.
%If $u \in C([0,T];H^s(\T))$ is a solution to \eqref{NLS},
With the notation above,
there exists a constant $C( \| u \|_{L_T^\infty H^s})>0$ such that the following estimates hold:
\begin{align}
\sup_{k \in \Z} \jb{k}^{s-2} \big( |\Nl_{1,1}(t,k)| + |\Nl_2(t,k)| \big)
&\le
C( \| u \|_{L_T^\infty H^s}),
\label{F1F2}
\\
\sup_{k \in \Z} \jb{k}^{s-1} |\Ml (t,k)| 
&\le
C( \| u \|_{L_T^\infty H^s}),
\label{Gest}
\\
\sup_{k \in \Z} \jb{k}^{s-2} |\Kl (t,k)|
&\le
C( \| u \|_{L_T^\infty H^s})
\label{Hest}
\end{align}
for $t \in [0,T]$.
\end{proposition}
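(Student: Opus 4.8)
The plan is to estimate each of $\Nl_{1,1}$, $\Nl_2$, $\Ml$, and $\Kl$ by reducing them on the Fourier side to convolutions and applying Young's inequality, exploiting the frequency localizations built into \eqref{decF1} and \eqref{decF2}. First I would record the facts used throughout. Since $\Re P_0 \Ta_\be$ is real, the phase in \eqref{fwga} has modulus $1$, so $|\ft\fw(t,k)| = |\ft W(t,k)|$; hence $\jb{k}^{s-2}|\ft\fw(t,k)| \le \|W\|_{H^{s-2}} \le C(\|u\|_{L_T^\infty H^s})$ as in Lemma \ref{lem:Enebd1}. I would also note $g := \Ta_{\cj\be} e^{-\Ld + \cj\Ld} \in H^{s-1}$, that $\wt R \in H^{s-2}$, and that the derivative nonlinearity satisfies $\Ta_{\cj\be} e^{-\Ld+\cj\Ld} \cj{\dx W} \in H^{s-3}$ by Proposition \ref{prop:bili} (all with norms $\le C(\|u\|_{L_T^\infty H^s})$). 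Since $s > \frac52$ gives $s-1 > \frac32$ and $s-2 > \frac12$, we have $\ft g, \ft{\wt R} \in \ell^1$ and $\jb{\cdot}^{s-1}\ft g, \jb{\cdot}^{s-2}\ft W \in \ell^2$; these are the only properties of $u$ I would use.

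The estimate for $\Nl_2$ is immediate from $|\Nl_2(t,k)| = |\ft{\wt R}(t,k)|$ and $\wt R \in H^{s-2}$. For $\Nl_{1,1}$ the point is the localization $|k_1| \ge \frac{|k_2|}2$: here $\jb{k} \les \jb{k_1}$ and $|k_2| \les \jb{k_1}$, so the derivative factor $k_2$ is absorbed into the smooth factor, giving $\jb{k}^{s-2}|\Nl_{1,1}(t,k)| \les \sum_{k_1+k_2=k} \jb{k_1}^{s-1}|\ft g(t,k_1)|\,|\ft\fw(t,-k_2)|$. This is a convolution of two $\ell^2$ sequences, so its $\ell^\infty_k$ norm is $\les \|g\|_{H^{s-1}} \|W\|_{L^2}$, which gives \eqref{F1F2}.

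For $\Ml$ I would use that on the support $|k_1| < \frac{|k_2|}2$ one has $|k| \sim |k_2|$, whence $k^2 + k_2^2 \sim \jb{k}^2$ and $\frac{|k_2|}{k^2+k_2^2} \sim \jb{k}^{-1}$. Thus $\jb{k}^{s-1}|\Ml(t,k)| \les \sum_{k_1+k_2=k} |\ft g(t,k_1)|\, \jb{k_2}^{s-2}|\ft\fw(t,-k_2)|$, a convolution of an $\ell^1$ sequence with an $\ell^2$ sequence, and Young's inequality yields \eqref{Gest}. The gain of one derivative relative to $\ft\fw$ comes precisely from the two derivatives in the denominator offsetting the single derivative $k_2$.

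The term $\Kl$ is the main obstacle, since the time derivative in \eqref{decF2} can fall on each of the three factors and, on the last one, reproduces a derivative nonlinearity. When $\dt$ hits the phase it produces $k_1 \Re P_0 \Ta_\be$; as $|P_0\Ta_\be| \le C(\|u\|_{L_T^\infty H^s})$ and $|k_1| < \frac{|k_2|}2$, the extra $|k_1|$ is absorbed by the $\jb{k}^{-1}$ smoothing exactly as for $\Ml$. When $\dt$ hits $\Ft[g]$, I would use the equations for $u,v,w$ (the $\eps = 0$ case of \eqref{ueps}--\eqref{weps}) together with the identity \eqref{Tabbep} to write $\dt g = \dx G_1 + G_2$ with $G_1, G_2$ polynomials in $u,v,w,\cj u,\cj v,\cj w$ and the exponentials, hence in $H^{s-2}$; the derivative $\dx$ contributes a factor $|k_1|$ again absorbed by $\jb{k}^{-1}$ via $|k_1| < \frac{|k_2|}2$, reducing to an $\ell^1 * \ell^2$ convolution. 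The genuinely delicate case is when $\dt$ falls on $\cj{\ft\fw(t,-k_2)}$: here I would substitute equation \eqref{ftfw1} rather than its decomposed form \eqref{ftfw2}, so as to avoid circularity, and use the crucial cancellation of the $k^2$-oscillation that already produced \eqref{ftfw1} --- this ensures the linear term carries only one power of $k_2$, which combines with the $\jb{k}^{-2}$ from the denominator to leave the $\ell^2$ factor $|\ft W(-k_2)|$. The remaining contribution is the derivative nonlinearity $\Nl_1(-k_2)$, controlled by the $H^{s-3}$ bound on $\Ta_{\cj\be} e^{-\Ld+\cj\Ld} \cj{\dx W}$: the $\jb{k}^{-1}$ smoothing turns the weight $\jb{k}^{s-2}$ into $\jb{k_2}^{s-3}$, which is exactly matched by that $H^{s-3}$ regularity, and a final $\ell^1 * \ell^2$ convolution with $\ft g$ gives \eqref{Hest}. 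The heart of the matter is bookkeeping the single derivative loss in the nonlinearity against the two-derivative gain of the normal-form denominator, and it is the $k^2$-cancellation in \eqref{ftfw1} that makes this balance close for all $s > \frac52$ rather than only $s > \frac72$.
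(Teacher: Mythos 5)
Your proposal is correct and follows essentially the same route as the paper: the same frequency-localized convolution estimates for $\Nl_{1,1}$, $\Nl_2$, and $\Ml$, and the same three-way split of $\Kl$ according to where $\dt$ falls, handling $\dt \Ft\big[\Ta_{\cj\be}e^{-\Ld+\cj\Ld}\big]$ via the evolution equations (your $\dx G_1 + G_2$ decomposition is equivalent to the paper's bound on $\big\| \dt \big( \Ta_{\cj \be} e^{-\Ld + \cj \Ld} \big) \big\|_{H^{-1}}$ through \eqref{Tabbep}) and $\dt \ft\fw$ via the first equality in \eqref{ftfw1}. The only cosmetic differences are your use of Young's inequality for $\ell^1 * \ell^2$ convolutions where the paper uses Cauchy--Schwarz for $\ell^2 * \ell^2$ ones; both close for all $s > \frac 52$.
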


\begin{proof}
For simplicity, we suppress the time dependence in this proof.
First, we prove \eqref{F1F2}. 
Note that $k_1,k_2 \in \Z$ with $|k_1| \ge \frac{|k_2|}2$ implies that $\jb{k_1+k_2} \les \jb{k_1}$.
It follows from \eqref{decF1} and H\"older's inequality that
\begin{equation}
\begin{aligned}
\jb{k}^{s-2} |\Nl_{1,1}(k)|
&\les
\sum_{\substack {k_1 + k_2 = k \\ |k_1| \ge \frac{|k_2|}2}}
\jb{k_1}^{s-1} \big| \Ft \big[ \Ta_{\cj \be} e^{-\Ld + \cj \Ld} \big] (k_1) \big|  |\ft \fw (-k_2)| 
\\
&\les
\big\| \Ta_{\cj \be} e^{-\Ld + \cj \Ld} \big\|_{H^{s-1}} \|\fw\|_{L^2}
\\
&\les
\| \Ta_{\cj \be}\|_{H^{s-1}} \big\| e^{-\Ld + \cj \Ld} \big\|_{H^{s-1}}  \|\fw\|_{L^2}
\\
&\le
C( \| u \|_{L_T^\infty H^s}).
\label{F11}
\end{aligned}
\end{equation} 
Since $\wt R$ is a polynomial in $u,v,W, \cj u, \cj v, \cj W, e^{\Ld}, e^{\cj \Ld}$, we have 
\begin{equation}
\jb{k}^{s-2} |\Nl_2 (k)|
=
\jb{k}^{s-2} |\Ft [\wt R] (k)|
\les
\|\wt R\|_{H^{s-2}}
\le
C( \| u \|_{L_T^\infty H^s}).
\label{F2}
\end{equation} 
Combining \eqref{F11} and \eqref{F2}, we get \eqref{F1F2}.

Next, we consider \eqref{Gest}.
By \eqref{decF2}, we obtain that
\begin{align*}
\jb{k}^{s-1} |\Ml (k)|
&\les
\sum_{\substack{k_1 + k_2=k \\ |k_1| < \frac{|k_2|}2}}
\big| \Ft \big[ \Ta_{\cj \be} e^{-\Ld + \cj \Ld} \big] (k_1) \big|
\jb{k_2}^{s-2} |\ft \fw(-k_2)|
\\
&\les
\big\| \Ta_{\cj \be} e^{-\Ld + \cj \Ld} \big\|_{L^2} \|\fw\|_{H^{s-2}}
\\
&\les
\| \Ta_{\cj \be}\|_{H^1} \big\| e^{-\Ld + \cj \Ld}  \big\|_{H^1} \|\fw\|_{H^{s-2}}
\\
&\le
C( \| u \|_{L_T^\infty H^s}),
\end{align*}
which shows \eqref{Gest}.

Finally, we consider \eqref{Hest}.
It follows from \eqref{decF2} that
\begin{equation}
\begin{aligned}
&\jb{k}^{s-2} |\Kl (k)|
\\
&\les
\sum_{\substack{k_1 + k_2=k \\ |k_1| < \frac{|k_2|}2}}
\biggl(
\jb{k_1} |\Re P_0 \Ta_\be |
\big| \Ft \big[ \Ta_{\cj \be} e^{-\Ld + \cj \Ld} \big] (k_1) \big| \jb{k_2}^{s-3}|\ft \fw(-k_2)|
\\
&\hspace*{60pt}
+
\jb{k_1}^{-1}
\big| \dt \Ft \big[ \Ta_{\cj \be} e^{-\Ld + \cj \Ld} \big] (k_1) \big|
\jb{k_2}^{s-2} |\ft \fw(-k_2)|
\\
&\hspace*{60pt}
+
\big| \Ft \big[ \Ta_{\cj \be} e^{-\Ld + \cj \Ld} \big] (k_1) \big| \jb{k_2}^{s-3} |\dt \ft \fw(-k_2)|
\biggr)
\\
&=:
\Kl_1(k) + \Kl_2(k) + \Kl_3(k).
\label{pfHest}
\end{aligned}
\end{equation}

First, we estimate $\Kl_1$.
H\"older's inequality and Proposition \ref{prop:bili} yield that
\begin{equation}
\begin{aligned}
|\Kl_1(k)|
&\les
\| \Ta_\be \|_{L^2}
\sum_{\substack{k_1 + k_2=k \\ |k_1| < \frac{|k_2|}2}}
\jb{k_1}
\big| \Ft \big[ \Ta_{\cj \be} e^{-\Ld + \cj \Ld} \big] (k_1) \big| 
\jb{k_2}^{s-3}|\ft \fw(-k_2)|
\\
&\les
\| \Ta_\be \|_{L^2}
\big\| \Ta_{\cj \be} e^{-\Ld + \cj \Ld} \big\|_{H^1} \|\fw\|_{H^{s-3}}
\\
&\les
\| \Ta_\be \|_{L^2}
\| \Ta_{\cj \be}\|_{H^1} \big\| e^{-\Ld + \cj \Ld} \big\|_{H^1}  \|\fw\|_{H^{s-3}}
\\
&
\le C( \| u \|_{L_T^\infty H^s}).
\label{estH1}
\end{aligned}
\end{equation}
Second, we consider $\Kl_2$.
H\"older's inequality and Proposition \ref{prop:bili} imply that
\begin{equation}
\begin{aligned}
&|\Kl_2(k)|
\\
&\les
\big\| \dt \big( \Ta_{\cj \be} e^{-\Ld + \cj \Ld} \big) \big\|_{H^{-1}} \| \fw \|_{H^{s-2}}
\\
&\les
\big( \big\| \dt \Ta_{\cj \be} \cdot e^{-\Ld + \cj \Ld} \big\|_{H^{-1}} + \big\| \Ta_{\cj \be} \dt \big( e^{-\Ld + \cj \Ld} \big) \big\|_{H^{-1}} \big) \|\fw\|_{H^{s-2}}
\\
&\les
\big( \| \dt \Ta_{\cj \be} \|_{H^{-1}} \big \| e^{-\Ld + \cj \Ld} \big\|_{H^1}
+ \| \Ta_{\cj \be} \|_{H^1} \big\| \dt \big( e^{-\Ld + \cj \Ld} \big) \big\|_{H^{-1}} \big) \|\fw\|_{H^{s-2}}
.
\end{aligned}
\label{estH2a}
\end{equation}
By \eqref{Tabbep} with $\eps=0$,
we have
\[
\begin{aligned}
\dt \Ta_{\cj \be}
%&= \Ta_{\al \cj \be} \cdot \dt u + \Ta_{\be \cj \be} \cdot \dt v + \Ta_{\cj \al \cj \be} \cdot \cj{\dt u} + \Ta_{\cj \be  \cj \be} \cdot \cj{\dt v}
%\\
&=
-i \dx (\Ta_{\be \cj \be}  w - \Ta_{\cj \be \cj \be} \cj{w})
+ R_3,
%\label{dtFcjbe}
\end{aligned}
\]
where $R_3$ is a polynomial in $u,v,w, \cj u, \cj v, \cj w$.
Then,
it holds that
\begin{align}
\| \dt \Ta_{\cj \be} \|_{H^{-1}}
&\le
\| \Ta_{\be \cj \be} w - \Ta_{\cj \be \cj \be} \cj{w} \|_{L^2}
+ \| R_3 \|_{H^{-1}}
\notag
\\
&\le C( \| u \|_{L_T^\infty H^s}),
\label{estH2ba}
\\
\big\| \dt \big( e^{-\Ld + \cj \Ld} \big) \big\|_{H^{-1}}
&\les
\big\| e^{-\Ld + \cj \Ld} \big\|_{H^1} \| -\dt \Ld + \cj{\dt \Ld} \|_{H^{-1}}
\notag
\\
&\le C( \| u \|_{L_T^\infty H^s}).
\label{estH2bb}
\end{align}
From \eqref{estH2a}--\eqref{estH2bb}, we obtain that
\begin{equation}
|\Kl_2(k)|
\le C( \| u \|_{L_T^\infty H^s}).
\label{estH2}  
\end{equation}

Third, we estimate $\Kl_3$.
It follows from \eqref{pfHest} that
\begin{align*}
|\Kl_3(k)|
&\les
\sum_{k_1 \in \Z} \big| \Ft \big[ \Ta_{\cj \be} e^{-\Ld + \cj \Ld} \big] (k_1) \big|
\sup_{k_2 \in \Z} \jb{k_2}^{s-3} |\dt \ft \fw(-k_2)|
\\
&\les
\big\| \Ta_{\cj \be} e^{-\Ld + \cj \Ld} \big\|_{H^1}
\sup_{k_2 \in \Z} \jb{k_2}^{s-3} |\dt \ft \fw(-k_2)|.
\end{align*}
The first equality in \eqref{ftfw1}, Proposition \ref{prop:bili}, and $s>\frac 52$ yield that
\begin{align*}
&\jb{k_2}^{s-3} |\dt \ft \fw(-k_2)|
\\
&\les
\jb{k_2}^{s-2} |P_0 \Ta_\be(t)| |\ft \fw(-k_2)|
\\
&\quad
+ \jb{k_2}^{s-3}
\big| \Ft \big[ \Ta_{\cj \be} e^{-\Ld + \cj \Ld} \cj{\dx W} \big] (-k_2) \big|
+ \jb{k_2}^{s-3}
|\ft{\wt R}(-k_2)|
\\
&\les
\| \Ta \|_{L^2} \| \fw \|_{H^{s-2}}
+
\big\| \Ta_{\cj \be} e^{-\Ld + \cj \Ld} \cj{\dx W} \big\|_{H^{s-3}}
+ \| \wt R \|_{H^{s-3}}
\\
&\les
\| \Ta \|_{L^2} \| \fw \|_{H^{s-2}}
+
\big\| \Ta_{\cj \be} e^{-\Ld + \cj \Ld} \big\|_{H^{s-2}} \| \dx W \|_{H^{s-3}}
+ \| \wt R \|_{H^{s-3}}
\\
&\le
C( \| u \|_{L_T^\infty H^s}).
\end{align*}
Hence, we have that
\begin{equation}
|\Kl_3(k) | \le
C( \| u \|_{L_T^\infty H^s}).
\label{estH3}
\end{equation}
From \eqref{pfHest}, \eqref{estH1}, \eqref{estH2}, and \eqref{estH3},
we obtain \eqref{Hest}.
This concludes the proof.
\end{proof}

\subsection{Proof of Theorem \ref{thm:NE2}}
\label{subsec:nonex2}

%Let $u \in C([0,T]; H^s(\T))$ be a solution to \eqref{NLS}.
We use the same notations as in Subsection \ref{SUBSEC:apriori1}.
We only consider the case (i) in Theorem \ref{thm:NE2},
since the case (ii) follows from a straightforward modification.
In what follows, we assume that
\begin{equation}
\Im P_0 \Ta_\be(0) >0.
\label{NI:phim2}
\end{equation}

From \eqref{ftfw2}, we have
\begin{align*}
\ft \fw(t,k) 
&=
e^{- k\int^t_0 \Im P_0 \Ta_\be(\tau) d\tau} \ft \fw(0)
\\
&\quad
+ \Big( \Ml(t,k) - e^{-k\int^t_0 \Im P_0 \Ta_\be(\tau) d\tau} \Ml(0,k) \Big) 
\\
&\quad
+
\int^t_0 e^{-k \int^t_{t'} \Im P_0 \Ta_\be(\tau) d\tau} \big( k (\Im P_0 \Ta_\be(t')) \Ml (t',k)
\\
&\hspace*{100pt}
+ \Nl_{1,1} (t',k) +\Nl_2 (t',k) + \Kl (t',k) \big) dt'
\end{align*}
for $t \in [0,T]$ and $k \in \Z$.
It holds that
\[
\begin{aligned}
\ft \fw(0,k)
&=
e^{k\int^t_0 \Im P_0 \Ta_\be(\tau) d\tau} \ft \fw(t,k)
\\
&\quad
- \Big( e^{k\int^t_0 \Im P_0 \Ta_\be(\tau) d\tau} \Ml(t,k) - \Ml(0,k) \Big)
\\
&\quad
-
\int^t_0 e^{k \int^{t'}_0 \Im P_0 \Ta_\be(\tau) d\tau}
\big( k (\Im P_0 \Ta_\be(t')) \Ml (t',k)
\\
&\hspace*{70pt}
+ \Nl_{1,1} (t',k) +\Nl_2 (t',k) + \Kl (t',k) \big) dt'.
\end{aligned}
\]
By Proposition \ref{prop:FGH}, we obtain that
\begin{equation}
\begin{aligned}
\jb{k}^{s-1} |\ft \fw(0,k)|
&\le
%e^{k\int^t_0 \Im P_0 \Ta_\be(\tau) d\tau} \jb{k}^{s-1} |\ft \fw(t,k)|
%\\
%&\quad
%+
C( \| u \|_{L_T^\infty H^s})
\bigg(
\jb{k} e^{k\int^t_0 \Im P_0 \Ta_\be(\tau) d\tau}
%+
%e^{k\int^T_0 \Im P_0 \Ta_\be(\tau) d\tau} \| \fw \|_{H^{s-2}}
%+
%\|\fw \|_{H^{s-2}}
+ 1
\\
&\hspace*{90pt}
+
\jb{k} \int^t_0 e^{k \int^{t'}_0 \Im P_0 \Ta_\be(\tau) d\tau} dt'
\bigg)
\label{kft0}
\end{aligned}
\end{equation}
for $t \in [0,T]$ and $k \in \Z$.

From the continuity of $P_0 \Ta_\be$ and \eqref{NI:phim2},
there exists $ T^{\ast} \in (0,T]$ such that
\begin{equation}
\int^{t}_0 \Im P_0 \Ta_\be (\tau) d\tau
\ge
\frac {\Im P_0 \Ta_\be (0)}2 t
\label{intM}
\end{equation}
for any $t\in [0,T^{\ast}]$.
Then,
we have
\begin{equation}
\begin{aligned}
\int^{T^{\ast}}_0 e^{k \int^{t'}_0 \Im P_0 \Ta_\be(\tau) d\tau} dt'
&\le
\int^{T^{\ast}}_0 e^{k \frac{\Im P_0 \Ta_\be(0)}2 t'} dt'
\\
&= \frac 2{k \Im P_0 \Ta_\be(0)} \Big( e^{k \frac{\Im P_0 \Ta_\be(0)}2 T^\ast} - 1 \Big)
\\
&\le
\frac 2{|k| \Im P_0 \Ta_\be(0)}
\end{aligned}
\label{eintM}
\end{equation}
for $k <0$.

By \eqref{intM} and the fact that
\[
\sup_{x>0} (1+x)e^{-ax} 
\le
\max(1,a)
\]
for $a>0$,
we have
\begin{equation}
\begin{aligned}
\jb{k} e^{k\int^{T^\ast}_0 \Im P_0 \Ta_\be(\tau) d\tau}
&\le (1+|k|) e^{-|k| \frac{\Im P_0 \Ta_\be(0)}2 T^\ast}
\\
&\le \max \Big( 1, \frac{\Im P_0 \Ta_\be(0)}2 T^\ast \Big)
\end{aligned}
\label{estM00}
\end{equation}
for $k<0$.
It follows from \eqref{kft0} with $t=T^\ast$, \eqref{eintM}, and \eqref{estM00} that
\[
\sup_{k <0}
\jb{k}^{s-1} |\ft \fw(0,k)|
\le
C( \| u \|_{L_T^\infty H^s}).
\]
Hence,
we have
\begin{align*}
\| P_- \fw(0) \|_{H^r}^2
&\le
\sum_{k<0} \jb{k}^{2r} |\fw(0,k)|^2
\le
C( \| u \|_{L_T^\infty H^s})
\sum_{k<0} \jb{k}^{2(r-s+1)}
\\
&\le
C( r, \| u \|_{L_T^\infty H^s})
\end{align*}
for $r<s-\frac 32$.
It follows from Proposition \ref{prop:bili2}, Lemma \ref{lem:bili3}, and \eqref{fwga} that
\begin{align*}
\| P_- \phi \|_{H^{r+2}}
&= 
\| P_- ( e^{\Ld} W)(0) \|_{H^r}
\\
&\le
\| P_- ( e^{\Ld} P_- W)(0) \|_{H^r}
+ 
\| P_- ( e^{\Ld} P_0 W)(0) \|_{H^r}
\\
&\hspace*{120pt}
+ 
\| P_- ( e^{\Ld} P_+ W)(0) \|_{H^r}
\\
&\les
\| e^{\Ld(0)} \|_{H^{\frac 12 + \eps}} \|  P_- \fw(0) \|_{H^r}
+
\| e^{\Ld(0)} \|_{H^r} \| W(0) \|_{H^{\frac 12+\eps}}
%\\
%&\le
%C( \| u \|_{L_T^\infty H^s})
%\big(
%\|  P_- \fw(0) \|_{H^r}
%+ 1 \big)
\\
&\le
C( r, \| u \|_{L_T^\infty H^s})
\end{align*}
for $r \in [0,s-\frac 32)$ and $\eps \in (0, s-\frac 52)$.
By taking $r = s-2+\dl$ for $\dl \in (0,\frac 12)$,
this shows (i) in Theorem \ref{thm:NE2}.

\mbox{}

\noindent
{\bf 
Acknowledgements.}
M.O.~was supported by JSPS KAKENHI Grant number JP23K03182.

\end{document}